\newcommand {\E}{{\mathrm E}}
\newcommand{\R}{\mathbb{R}}
\newcommand{\B}{\mathcal{B}}
\newcommand{\F}{\mathcal{F}}
\newcommand\SO[1]{{\mathrm{SO}}_{#1}}
\newcommand\SOR[1]{{\mathrm{SO}}_{#1}(\R)}
\newcommand\Sph{{\mathbb S}}
\renewcommand{\P}{\mathrm{P}}
\newtheorem{prop}{Proposition}[section]
\newtheorem{thm}[prop]{Theorem}
\newtheorem{cor}[prop]{Corollary}
\newtheorem{lemma}[prop]{Lemma}
\newtheorem{defi}[prop]{Definition}
\theoremstyle{definition}
\theoremstyle{remark}
\newtheorem{rmk}[prop]{Remark}
\numberwithin{equation}{section}
\def \B{\bar{B}}
\def \N{\mathbb N}
\def \I{\mathbb I}
\def\T{\mathbb T}
\begin{document}

\title[The Landau Equation and 
the Brownian Motion on $\SOR{N}$.]{The Landau Equation for Maxwellian molecules
and 
the Brownian Motion on $\SOR{N}$. }

\author{Fran\c{c}ois Delarue}
\address[Fran\c{c}ois Delarue]{Laboratoire J.A.Dieudonn\'e, UMR 7351,
Universit\'e Nice Sophia-Antipolis,
Parc Valrose, 06108 Nice, France}
\email{delarue@unice.fr}

\author{St\'ephane Menozzi}
\address[St\'ephane Menozzi]{Laboratoire de Mod\'elisation Math\'ematique d'Evry, UMR 8071,
Universit\'e d'Evry Val d'Essonne, 23 Boulevard de France, 91037 Evry, France}
\email{stephane.menozzi@univ-evry.fr}

\author{Eulalia Nualart}
\address[Eulalia Nualart]{Department of Economics and Business, Universitat Pompeu Fabra and Barcelona Graduate School of Economics, Ram\'on Trias Fargas 25-27, 08005
Barcelona, Spain}
\email{eulalia@nualart.es}
\urladdr{http://nualart.es}

\thanks{Part of the work for this paper was done when the authors where visting the 
Centre Interfacultaire Bernoulli of the \'Ecole Polytechnique F\'ed\'erale de Lausanne, during the semester on Stochastic Analysis and Applications organized by Robert Dalang, 
Marco Dozzi, Franco Flandoli and Francesco Russo. We are very grateful for their invitation and hospitality. Third author acknowledges support from the European Union programme
 FP7-PEOPLE-2012-CIG under grant agreement 333938}

\begin{abstract}
In this paper we prove that the 
spatially homogeneous Landau equation for Maxwellian molecules
can be represented through the product of two elementary processes. The first one is 
the Brownian motion on the group of rotations. The second one is, 
conditionally on the first one, a Gaussian process. Using this representation, we establish sharp multi-scale upper and lower 
bounds for the transition density of the Landau equation, the 
multi-scale structure depending on the shape of the support of the initial condition.  
\end{abstract}
\subjclass[2010]{60H30, 60H40, 60H10}
\keywords{Landau equation for Maxwellian molecules;  Stochastic analysis; Heat kernel estimates on groups; Large deviations.}
\date{June 2014}
\maketitle

\section{Statement of the problem and existing results}
The spatially homogeneous Landau equation for Maxwellian molecules is a common model in plasma physics. It can be obtained as a certain limit of the spatially homogeneous Boltzmann equation for 
$N$ dimensional
particles 
subject to pairwise interaction,
 when the collisions become grazing
 and when the interaction forces 
 between particles at distance $r$
 are
order $1/r^{2N+1}$
 (see Villani \cite{Villani:98b}
and Gu\'erin \cite{Guerin:04}). 

The Landau equation reads as a
nonlocal Fokker-Planck equation. Given an initial condition $(f(0,v),v \in \R^N)$, the solution is denoted by 
$(f(t,v), t \geq 0, v \in \R^N)$, $N \geq 2$, and satisfies
\begin{equation} \label{1}
\partial_t f(t,v)=Lf(t,v),
\end{equation}
where
\begin{equation} \label{2}
Lf(t,v) =\nabla \cdot \int_{\R^N} dv_{\ast} \, a(v-v_{\ast}) \left( f(t, v_{\ast}) 
\nabla f(t,v)-f(t,v) \nabla f(t, v_{\ast}) \right).
\end{equation}
Here, $a$ is an $N \times N$ 
nonnegative and symmetric matrix that depends on the collisions between binary particles. It is given by
(up to a multiplicative constant)
$$
a(v)=\vert v \vert^ 2 \text{Id}_N -v \otimes v,
$$
where Id$_N$ denotes the  identity matrix of size $N$, and $v \otimes v=  vv^{\top}$, $v^{\top}$ denoting the transpose of $v$, $v$ being seen as a column vector in $\R^N$.
The unknown function $f(t,v)$ represents the density of particles of velocity $v \in \R^N$ 
 at time $t \geq 0$ in a gas. It is assumed to be independent of
the position of the particles (spatially homogeneous case).

The density $f(t,v)$ being given, the nonlocal operator 
$L$ can be seen as a standard linear Fokker-Planck operator,
with diffusion matrix $\overline{a}(t,v)
=\int_{\R^N}  a(v-v_{\ast}) f(t, v_{\ast}) dv_{\ast}$ and with 
drift $
\overline{b}(t,v)= - (N-1) \int_{\R^N}  (v-v_{\ast}) f(t, v_{\ast}) dv_{\ast}$.
Such a reformulation permits to approach the Landau equation by means of the
numerious
 tools that have been developed for linear diffusion operators.
 As a key fact in that direction, the diffusion matrix 
$\overline{a}$ can be shown to be uniformly elliptic
for a wide class of initial conditions. This suggests that
the 
solution $f(t,v)$ must share some of the generic properties of non-degenerate diffusion operators. 

Such a remark is the starting point of the analysis initiated by 
Villani in \cite[Proposition 4]{Villani:98}. Therein, it is proved
that, whenever the initial condition $f(0,v)$ is nonnegative and has finite mass and energy,
the Landau PDE (\ref{1}) admits a unique solution, which is bounded and 
$\mathcal{C}^{\infty}(\R^N)$ in positive time.
Moreover, \cite[Proposition 9]{Villani:98} ensures that the solution satisfies the lower Gaussian bound
\begin{equation} \label{co}
f(t,v )\geq C_t e^{-\delta_t \frac{\vert v \vert^2}{2}}, \ t>0, \ v \in \R^N, 
\end{equation}
for some $C_t>0$ and $\delta_t>0$.
The values of the constants $C_{t}$ and $\delta_{t}$ are specified in
Desvillettes and Villani \cite[Theorem 9(ii)]{DesVillani:00} when $N=3$,
under the additional condition that $f(0,v)$ has finite entropy and is bounded from 
below  
by a strictly positive constant on a given ball. The lower bound \eqref{co}
is then established with $C_{t}=1$ and $\delta_{t} = b_{0}t + c_{0}/t$. 
This proves that, in finite time, the rate of propagation of the
 mass to the
infinity is at least the same as for the heat equation. 
The key argument in \cite{DesVillani:00}
is to prove that the spectrum of $\bar{a}(t,v)$ is uniformly far away from zero, 
so that the mass can be indeed diffused to the whole space. 

Anyhow, even if the lower bound \eqref{co} fits 
the off-diagonal decay of the heat kernel, 
it is worth mentioning that $\bar{a}(t,v)$ does not enter the required framework 
for applying two-sided Aronson's estimates for diffusion operators, see \cite{aron:67}. Indeed, the upper eigenvalue
of $\bar{a}(t,v)$  
can be shown to behave as $\vert v \vert^2$
when $\vert v \vert$ is large. The matrix $\bar{a}(t,v)$ thus exhibits several scales when $\vert v \vert$ tends to the infinity, 
which is the
basic observation for motivating our analysis. Actually,
a simple inspection will show that, for the same type of initial conditions as above, the
quadratic form associated with $\bar{a}(t,v)$ has two regimes when $\vert v \vert$ is large. 
Along unitary vectors parallel to $v$, the quadratic form takes values of order $1$. Along 
unitary vectors 
orthogonal to $v$, it takes values of order $\vert v \vert^2$. 
This suggests that the mass is spread out at a standard diffusive rate along \textit{radial} directions, 
but at a much quicker rate along \textit{tangential} directions. 
One of the main objective of the paper is to quantify this phenomenon precisely and 
to specify how it affects 
the lower bound 
\eqref{co}, especially for highly anisotropic initial conditions. We also intend to discuss the sharpness of the bound by 
investigating the corresponding upper bound. 

The strategy we have in mind is probabilistic. The starting point consists in deriving a probabilistic 
interpretation of the 
nonlinear operator $L$ by means of a stochastic diffusion process
$(X_{t})_{t \geq 0}$ interacting with its own distribution, 
in the spirit of McKean to handle Vlasov type equations (see Sznitman \cite{sznitman}). 
Actually, McKean-Vlasov representations of the Landau equation were already investigated in 
earlier works by Funaki 
\cite{funaki:publiRIMS:83,funaki:ZW:84,funaki:duke:85,
funaki:LNM}
and more recently by Gu\'erin
\cite{Guerin:02, Guerin:03}. Part of the analysis developed in these series of papers is based on 
a very useful trick for representing the square root of the matrix $\overline{a}$, 
the square root of the diffusion matrix playing a key role in the dynamics of the stochastic process involved in 
the representation. In short, the key point therein is to enlarge 
the underlying probability space in order to identify the diffusive term 
with the stochastic integral of the root of $a$ (and not the root of 
$\overline{a}$) with respect to a two-parameter white noise process. 
Basing the representation on the root of $a$ makes it more tractable since 
$a(v)$ has a very simple geometric interpretation in terms of the orthogonal 
projection on the orthogonal $v^{\perp}$ of $v$. 
In this paper, we go one step forward into the explicitness of the representation. 
As a new feature, we show that the representation  
used by Funaki and Gu\'erin can be linearized so that the 
stochastic process $(X_{t})_{t \geq 0}$ solving the \textit{enlarged} McKean-Vlasov
equation reads as the product of 
two \textit{auxiliary} basic processes:
\begin{equation*}
X_{t} = Z_{t} \Gamma_{t}, \quad t \geq 0. 
\end{equation*}
The first one is a (right) Brownian motion $(Z_{t})_{t \geq 0}$
on the special group of rotations $\SOR{N}$. The second one is, conditionally on $(Z_{t})_{t \geq 0}$, a Gaussian process in $\R^N$ with a local covariance matrix 
given, at any time $t \geq 0$, 
by the second order moments of the density $f(t,v)$. 
Such a decomposition enlightens explicitly the coexistence of two scales in the dynamics of 
the Landau equation. It is indeed well seen that the Brownian motion on $\SOR{N}$ cannot play any role
in the diffusion of the mass along radial directions. Therefore, along such directions, 
only $(\Gamma_{t})_{t \geq 0}$ can have an impact. Its covariance matrix can be proved
to be uniformly non-degenerate for a wide class of initial conditions, 
explaining why, in such cases, 
the mass is transported along radial directions according to the standard heat propagation.  
The picture is different along tangential directions since, in addition to the 
fluctuations of $(\Gamma_{t})_{t \geq 0}$, the process $(X_{t})_{t \geq 0}$ 
also feels the fluctuations 
of the Brownian motion $(Z_{t})_{t \geq 0}$
on $\SOR{N}$. 
The effect of $(Z_{t})_{t \geq 0}$ is all 
the more visible when the process $(X_{t})_{t \geq 0}$ is far away from the origin: Because 
of the product form of the representation, 
the fluctuations in the dynamics of  $(Z_{t})_{t \geq 0}$ 
translate into multiplied fluctuations in the dynamics of $(X_{t})_{t \geq 0}$
when $(X_{t})_{t \geq 0}$ is of large size.

Our main result in that direction is Theorem 
\ref{bds_EXPL_UE}, in which we provide two sided Gaussian bounds for the transition kernel 
of the process $(X_{t})_{t \geq 0}$ when the initial condition $X_{0}$ 
is a centered random variable with a support not included in a line. 
We then make appear the coexistence of two regimes
in the transition density by splitting the 
off-diagonal decay of the density into a \textit{radial cost} and 
a \textit{tangential cost}. We explicitly show that the variance of 
the tangential cost increases at a quadratic rate when the starting point in the transition density 
tends to the infinity. The resulting bounds are sharp, 
which proves that our approach captures
the behavior of the process in a correct way. 
The proof follows from our
factorization of the process $(X_{t})_{t \geq 0}$: Conditionally on 
the Brownian motion on $\SOR{N}$, $(X_{t})_{t \geq 0}$
is a Gaussian process with an explicit transition kernel. This gives
a conditional representation of the transition density of $(X_{t})_{t \geq 0}$ 
and this permits to reduce part of the work to the analysis of the 
heat kernel on the group $\SOR{N}$. As a by-product, this offers 
an alternative to a more systematic probabilistic method based on 
the Malliavin calculus, as considered for instance in 
Gu\'erin, M\'el\'eard and Nualart \cite{Guerin:06}. 

The conditional representation of the transition density of the process $(X_{t})_{t \geq 0}$ 
also permits to consider the so-called \textit{degenerate} case when the initial condition lies
in a line. In that case, another inspection will show that
the diffusion matrix $\bar{a}(t,v)$ degenerates as $t$ tends to $0$, 
the associated quadratic form converging to 0 with $t$ along the direction of the initial condition. 
Obviously, this adds another difficulty to the picture given above: Because of the degeneracy 
of the matrix $\bar{a}$, the mass cannot be transported along radial directions as
in standard heat propagation. In that framework, our representation provides 
a quite explicit description of the degeneracy rate of the system in small time.
Indeed,  
conditionally on the realization of the Brownian motion $(Z_{t})_{t \geq 0}$
 on $\SOR{N}$, the degeneracy is determined by the covariance matrix 
of the process $(\Gamma_{t})_{t \geq 0}$, the form of which is, contrary to the 
non-degenerate case, highly sensitive to the realization of $(Z_{t})_{t \geq 0}$.
The crux is thus that, in the degenerate regime, the Brownian motion on the group of
rotations also participates in the formation of the radial cost. 
Although quite exciting, this makes things rather intricate. In that direction, 
the thrust of our approach is to prove that large deviations of the process $(Z_{t})_{t \geq 0}$
play an essential role in the shape of the off-diagonal decay of the transition density. 
Precisely, because of that large deviations, we can show that, when the initial condition of 
the transition is restricted to compact sets, the off-diagonal decay of the transition density
is not Gaussian but is a mixture of an exponential and a Gaussian regimes,
see Theorem \ref{thm:bd:degenerate}. 
 
Besides the density estimates, we feel that our representation of the solution 
raises several questions and could serve as a basis for further investigations. Obviously, the first one concerns possible 
extensions to more general cases, when the coefficients include a hard or soft potential (so that molecules are no
more Maxwellian) or when
the solution of the Landau equation also depends on the position of the particle (and not only on its velocity). 
In the same spirit, we could also wonder about a possible adaptation of this approach 
to the Boltzmann equation itself. Finally, the representation might be also useful 
to compute the solution numerically, providing a new angle to tackle with the particle approach developed by Fontbona \textit{et al} \cite{font:guer:mele:09} and Carrapatoso \cite{carr:12} or Fournier \cite{four:09}. We leave all these questions to further prospects.

 The paper is organized as follows. Main results are detailed in Section \ref{se:main}. 
 In Section \ref{SEC_DENS}, we give some preliminary estimates concerning the Brownian motion on $\SOR{N}$. 
Section \ref{SEC_NON_DEG} is devoted to the analysis of the non-degenerate case and 
 Section \ref{SEC_DEG} to the degenerate case.

\section{Strategy and Main Results}
\label{se:main}

\subsection{Representation of the Landau equation}

The representation used in 
\cite{funaki:publiRIMS:83,funaki:ZW:84,funaki:duke:85,funaki:LNM,Guerin:02, Guerin:03}
is based on a probabilistic set-up, which consists of
\begin{enumerate}
\item
 a complete probability space $(\Omega, \mathcal{F}, \P)$, endowed with  
an $N$-dimensional space-time white noise $W = (W^1,...,W^N)$ 
with independent entries, each of them with covariance measure $ds d\alpha$ on $\R_+\times [0,1]$,  where $d\alpha$ denotes the
Lebesgue measure on $[0, 1]$;
\item a random vector $X_{0}$ with values in  $\R^N$, independent
of $W$, the augmented filtration generated by $W$ and $X_0$ being denoted by
 $(\mathcal{F}_t)_{t \geq 0}$;
\item  the auxiliary probability space $([0,1],\mathcal{B}([0, 1]), d\alpha)$;
\item the symbols $\E$, $\E_{\alpha}$ for denoting the expectations and
the symbols $\mathcal{L}, \mathcal{L}_{\alpha}$ for denoting 
the distributions of a random variable on $(\Omega,\mathcal{F},\P)$, $([0, 1],\mathcal{B}([0, 1]), d\alpha)$, respectively.
\end{enumerate}

A couple of processes $({\mathcal X},Y)$ on $(\Omega,\mathcal{F}, (\mathcal{F}_t)_{t \geq 0}, \P) \otimes ([0, 1],{\mathcal B}([0, 1]), d\alpha)$ 
is said to be a solution of the Landau SDE if ${\mathcal{L}({\mathcal X}) = \mathcal{L}_{\alpha}(Y)}$, and for all $t \geq 0$, the following equation holds
\begin{equation} \label{lan}
{\mathcal X}_t=X_0+\int_0^t \int_0^1 \sigma({\mathcal X}_s-Y_s(\alpha)) W(ds, d\alpha)  -(N-1)\int_0^t \int_0^1 ({\mathcal X}_s-Y_s(\alpha)) d\alpha ds,
\end{equation}
where $\sigma$ is an $N\times N$ matrix such that 
$\sigma \sigma^{\top}=a$, the symbol $\top $ standing from now on for the transposition. 
Roughly speaking, 
the connection with (\ref{1}) can be derived by computing:
\begin{equation*}
\E \biggl[ \biggl(  \int_0^1 \sigma(x-Y_s(\alpha)) W(ds, d\alpha) \biggr) 
\biggl( \int_0^1 \sigma(x-Y_s(\alpha)) W(ds, d\alpha) \biggr)^{\top}\biggr] \bigg\vert_{x = \chi_{s}}
= \bar{a}(s,\chi_{s}) ds, 
\end{equation*}
thus identifying the local covariance in \eqref{lan} with the diffusion matrix $\bar{a}$. Existence and uniqueness of a solution to \eqref{lan} has been discussed in \cite{Guerin:02}.

The starting point of our analysis is the geometric interpretation of the covariance matrix
\begin{equation}
\label{eq:a:pi} 
a(v)=\vert v\vert^2 \Pi(v),\quad \Pi(v)=\bigl( \text{Id}_N -\frac{v \otimes v}{|v|^2}\bigr), \quad v \in \R^N\backslash\{ 0\},
\end{equation}
where, for $v\neq 0$, $\Pi(v)$ is the orthogonal projection onto $v^\perp$. Indeed,
the key observation is that $a(v) $ also reads as the covariance matrix of the image of $v$ by an antisymmetric standard Gaussian matrix of dimension $N\times N$:
\begin{enumerate}
\item[(5)] 
 changing now the previous $W=((W^i)_{1\le i\le N}) $ into $W=((W^{i,j})_{1 \le i,j\le N}) $ where the $(W^{i,j})_{1 \leq i,j \leq N} $ are independent Gaussian white noises with covariance measure $dsd\alpha$ on $\R^+\times [0,1]$, 
\end{enumerate} 
it holds:
 \begin{equation*}
\begin{split}
\frac 12\E\bigl[ \bigl((W-W^\top)(ds,d\alpha)v \bigr) \otimes \bigl(
(W-W^\top)(ds,d\alpha)v\bigr) \bigr]=a(v) ds d\alpha,\quad v\in \R^N.
\end{split}
 \end{equation*}
The proof is just a consequence of the fact 
 \begin{equation}
 \label{eq:cove:W moins W bar}
\begin{split}
&\frac 12 \sum_{k,\ell=1}^N \E\bigl[ (W-W^\top)_{i,k}(ds,d\alpha)v_{k} \bigl(
(W-W^\top)_{j,\ell}(ds,d\alpha)v_{\ell}\bigr) \bigr]
 \\
&=  \sum_{k,\ell=1}^N \bigl(  \delta_{(i,k)}^{(j,\ell)} -
\delta_{(i,k)}^{(\ell,j)}  \bigr) v_{k} v_{\ell} ds d\alpha
= \bigl( \delta_{i}^j \vert v \vert^2 - v_{i} v_{j} \bigr) ds d\alpha = \bigl( a(v) \bigr)_{i,j} ds d\alpha,
 \end{split}
 \end{equation}
 where we have used the Kronecker symbol in the second line.

We derive the following result, which is at the core of the proof:

\begin{lemma} \label{l1}
Given the process $(Y_{t})_{t \geq 0}$, solution to Equation 
\eqref{lan}, consider the solution $(X_t)_{t \geq 0}$ to the 
SDE
\begin{equation}  \begin{split} \label{lan3}
&X_t = X_{0}
\\
&+ \int_0^t \int_0^1 \frac{W-W^{\top}}{2^{1/2}}(ds, d\alpha) (X_s-Y_s(\alpha))   
-(N-1)\int_0^t \int_0^1 (X_s-Y_s(\alpha)) d\alpha ds.
\end{split}
\end{equation}
Then, $(X_{t})_{t \geq 0}$ has the same law as $(Y_{t})_{t \geq 0}$ and thus as the solution of
the Landau SDE. 
\end{lemma}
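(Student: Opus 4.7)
The plan is to compare \eqref{lan} and \eqref{lan3} through the martingale problems they solve, conditionally on the auxiliary process $Y$. The observation $\sigma \sigma^{\top} = a$ used in \eqref{lan}, combined with the new covariance identity \eqref{eq:cove:W moins W bar}, suggests that the martingale part of \eqref{lan3} has exactly the same covariance structure as that of \eqref{lan}, while the drift parts visibly coincide. Weak uniqueness for the resulting (linear-in-the-unknown, $Y$-dependent) SDE then forces the two solutions to share the same law.

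The first step is the drift comparison: both equations contain the same affine-in-$Z$ term $-(N-1)\int_0^t\!\int_0^1 (Z_s - Y_s(\alpha))\,d\alpha\,ds$, so only the diffusive parts need attention. The second step is the quadratic-variation computation. For \eqref{lan}, the $\R^N$-valued martingale $M^{(1)}_t = \int_0^t\!\int_0^1 \sigma({\mathcal X}_s - Y_s(\alpha)) W(ds,d\alpha)$ admits, by Itô's isometry and $\sigma \sigma^{\top}= a$, the quadratic covariation matrix $\int_0^t\!\int_0^1 a({\mathcal X}_s - Y_s(\alpha))\,d\alpha\,ds$. For \eqref{lan3}, applying \eqref{eq:cove:W moins W bar} to $v = X_s - Y_s(\alpha)$ and integrating against $d\alpha\,ds$ shows that the martingale $M^{(2)}_t = \int_0^t\!\int_0^1 \tfrac{1}{\sqrt 2}(W-W^{\top})(ds,d\alpha)(X_s-Y_s(\alpha))$ has quadratic covariation matrix $\int_0^t\!\int_0^1 a(X_s - Y_s(\alpha))\,d\alpha\,ds$. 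Hence, conditionally on $Y$, both ${\mathcal X}$ and $X$ start from $X_0$ and satisfy a continuous semimartingale decomposition with identical drift and identical diffusion matrix $\bar{a}_Y(s,\cdot)$ where $\bar{a}_Y(s,z) = \int_0^1 a(z - Y_s(\alpha))\,d\alpha$.

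The third step is to invoke weak uniqueness. Given $Y$ on the auxiliary space $([0,1],\mathcal B([0,1]),d\alpha)$, the (conditional) martingale problem associated to the generator
\begin{equation*}
\mathcal L_Y \varphi(s,z) = \tfrac 12 \mathrm{tr}\bigl( \bar a_Y(s,z) D^2 \varphi(z) \bigr) - (N-1) \!\int_0^1\! (z - Y_s(\alpha))\,d\alpha \cdot \nabla \varphi(z)
\end{equation*}
is well-posed in the weak sense; this is covered by the existence-uniqueness theory of Gu\'erin \cite{Guerin:02} for the Landau SDE, since the drift is globally Lipschitz in $z$ and the diffusion coefficient is continuous and uniformly non-degenerate for the class of initial laws considered. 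Consequently ${\mathcal L}(X \mid Y) = {\mathcal L}({\mathcal X} \mid Y)$, and integrating against the law of $Y$ gives ${\mathcal L}(X) = {\mathcal L}({\mathcal X})$. The main obstacle to this scheme is purely bookkeeping: one has to make sure that the white noises $W$ and the auxiliary variable $Y$ are handled on the correct product space, so that the Itô integrals in \eqref{lan} and \eqref{lan3} really are martingales after conditioning on $Y$, and so that the weak-uniqueness result can be applied pathwise in $Y(\alpha)$.
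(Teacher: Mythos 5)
Your argument follows the paper's own route, just written out more fully. The printed proof is only the one-line remark that the bracket of the martingale part of \eqref{lan3} is identified with $\bar a(t,X_t)\,dt$ --- precisely the computation you perform via \eqref{eq:cove:W moins W bar} --- after which one appeals to the (tacit) weak uniqueness for the linearized SDE, exactly as you do. One small point of hygiene: $Y$ lives on the auxiliary space $([0,1],d\alpha)$, not on $\Omega$, so there is no genuine ``conditioning on $Y$'' or ``integrating against the law of $Y$''. Once the solution pair $({\mathcal X},Y)$ is fixed, the coefficients $\bar a_Y(s,z)=\int_0^1 a\bigl(z-Y_s(\alpha)\bigr)\,d\alpha=\bar a(s,z)$ and the drift $-(N-1)\bigl(z-\int_0^1 Y_s(\alpha)\,d\alpha\bigr)$ are deterministic functions of $(s,z)$ (the $\alpha$-integral is a deterministic average), so weak uniqueness applies directly and no disintegration step is required.
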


\begin{proof}
The proof follows from a straightforward identification of the 
bracket (in time) of the martingale part with $\bar{a}(t,X_{t})dt$. 
\end{proof}

The representation \eqref{lan3} is linear and therefore factorizes through the resolvent. Namely, 
\begin{lemma}
\label{LEMME_RES}
The solution $(X_t)_{t \geq 0}$ to \eqref{lan3} admits the following representation
\begin{equation} \label{lan4}
X_t=Z_t \left[X_0- \int_0^t \int_0^1 Z_s^{\top} \frac{W-W^{\top}}{2^{1/2}}(ds, d\alpha) Y_s(\alpha) \right],
\end{equation}
where letting
\begin{equation}
\label{DEF_B}
B_{t} = 2^{-1/2}\int_{0}^t \int_{0}^1 [W-W^{\top}](ds,d\alpha),
\quad t \geq 0,
\end{equation}
the process $(Z_{t})_{t \geq 0}$ solves the SDE: 
\begin{equation}  \label{resolvent3}
Z_t=\textnormal{Id}_{N}+\int_0^t dB_s Z_s 
-(N-1)\int_0^t Z_s ds=\textnormal{Id}_{N}+\int_0^t dB_s \circ Z_s,
\end{equation}
where $ dB_s \circ$ denotes the Stratonovitch integral. 
\end{lemma}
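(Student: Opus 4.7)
The plan is to verify both equalities in \eqref{resolvent3} and the representation \eqref{lan4} by direct Itô calculus, the common engine being the identities assembled in \eqref{eq:cove:W moins W bar}. The crucial structural ingredient is the orthogonality $Z_tZ_t^{\top}\equiv\mathrm{Id}_N$: it is what legitimates the replacement of the stochastic inverse of the resolvent by the transpose $Z_s^{\top}$ in \eqref{lan4}.

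For the Itô--Stratonovich equivalence in \eqref{resolvent3}, I would compute the cross-variation $\sum_k d\langle B^{i,k},Z^{k,j}\rangle_s$: using the covariance of $(W-W^{\top})/2^{1/2}$ recalled in \eqref{eq:cove:W moins W bar}, the two Kronecker terms collapse, after summation on $k$, to a scalar contribution proportional to $Z_s^{i,j}\,ds$, producing the drift $-(N-1)\int_0^tZ_s\,ds$ that converts the Stratonovich form into the Itô form. For the orthogonality, the antisymmetry of $B$ ($B^{\top}=-B$) reduces $d(Z_tZ_t^{\top})$, via Itô's product rule, to a commutator that vanishes on $\{Z_tZ_t^{\top}=\mathrm{Id}_N\}$, plus the quadratic covariation $d\langle Z,Z^{\top}\rangle_t$ which, by the same Kronecker identity, equals a multiple of $(\mathrm{tr}(Z_tZ_t^{\top})\mathrm{Id}_N-Z_tZ_t^{\top})\,dt$; on $\SO{N}$ the two bounded-variation pieces cancel, and pathwise uniqueness of the matrix-valued SDE then forces $Z_tZ_t^{\top}=\mathrm{Id}_N$ for every $t\ge 0$.

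With the orthogonality in hand, I set
\[
F_t:=X_0-\int_0^t\int_0^1 Z_s^{\top}\frac{W-W^{\top}}{2^{1/2}}(ds,d\alpha)\,Y_s(\alpha),
\quad
dM_t:=\int_0^1 \frac{W-W^{\top}}{2^{1/2}}(dt,d\alpha)\,Y_t(\alpha),
\]
so that $dF_t=-Z_t^{\top}dM_t$, and apply Itô's product rule to the product $Z_tF_t$. Writing $X_t=Z_tF_t$: the term $(dZ_t)F_t$ yields $dB_tX_t-(N-1)X_t\,dt$; the term $Z_t\,dF_t$ equals $-Z_tZ_t^{\top}dM_t=-dM_t$ by the orthogonality of $Z_t$, which recovers the inhomogeneous noise present in \eqref{lan3}; and the cross-variation $d\langle Z,F\rangle_t$, computed once more through \eqref{eq:cove:W moins W bar} together with $Z_tZ_t^{\top}=\mathrm{Id}_N$, produces exactly $(N-1)\bar{Y}_t\,dt$ with $\bar{Y}_t:=\int_0^1 Y_t(\alpha)\,d\alpha$, matching the drift $+(N-1)\bar{Y}_t\,dt$ that appears upon expanding the $-(N-1)\int_0^1(X_s-Y_s(\alpha))\,d\alpha\,ds$ term in \eqref{lan3}. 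The main obstacle is the orthogonality step: without $Z_t\in\SO{N}$ one could not substitute $Z_s^{\top}$ for $Z_s^{-1}$ in \eqref{lan4}, and the cross-variation in the final step would not collapse to a scalar multiple of $\bar{Y}_t$.
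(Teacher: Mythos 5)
Your strategy---apply Itô's product rule to $X_t=Z_tF_t$ with $F_t:=X_0-\int_0^tZ_s^\top\,dM_s$ and harvest the drift $(N-1)\bar Y_t\,dt$ from the cross-variation $d\langle Z,F\rangle_t$---is the same as the paper's; the authors' one-line proof is precisely your final bracket computation \eqref{eq:cov:B:B bar}. The gap is in the orthogonality step, on which the rest hangs. When you expand $d(Z_tZ_t^\top)$, the drift $-(N-1)Z_s\,ds$ in \eqref{resolvent3} contributes \emph{twice}---once through $(dZ_t)Z_t^\top$ and once through $Z_t\,(dZ_t^\top)$---so the bounded-variation part is
\[
-2(N-1)Z_tZ_t^\top\,dt \;+\; d\langle Z,Z^\top\rangle_t,
\qquad
d\langle Z,Z^\top\rangle_t=\bigl(\mathrm{tr}(Z_tZ_t^\top)\mathrm{Id}_N-Z_tZ_t^\top\bigr)\,dt,
\]
and on $\{Z_tZ_t^\top=\mathrm{Id}_N\}$ this equals $-2(N-1)\mathrm{Id}_N\,dt+(N-1)\mathrm{Id}_N\,dt=-(N-1)\mathrm{Id}_N\,dt\neq 0$. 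The two bounded-variation pieces do \emph{not} cancel, and your uniqueness argument does not produce $Z_tZ_t^\top\equiv\mathrm{Id}_N$; integrating the drift ODE instead gives $Z_tZ_t^\top=e^{-(N-1)t}\mathrm{Id}_N$, i.e. $Z_t$ is a scalar multiple of a rotation. The same factor of two contradicts your Itô--Stratonovich claim: the correction term is $\tfrac12\sum_k d\langle B^{i,k},Z^{k,j}\rangle_s=-\tfrac{N-1}{2}Z_s^{i,j}\,ds$, not $-(N-1)Z_s^{i,j}\,ds$.

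Consequently, the cancellation $Z_t\,dF_t=-Z_tZ_t^\top\,dM_t=-dM_t$ in your main step becomes $-e^{-(N-1)t}\,dM_t$, and the final Itô computation no longer closes to \eqref{lan3}. To repair this you must either substitute the genuine inverse $Z_s^{-1}=e^{(N-1)s}Z_s^\top$ in \eqref{lan4} and track the scalar factor through the cross-variation, or---as the authors do---take the identification of $(Z_t)_{t\ge 0}$ with Brownian motion on $\SOR{N}$ as a citation to Rogers--Williams and Franchi--Le Jan rather than re-deriving it by the self-contained argument you sketch, which, as written, is arithmetically inconsistent with the Itô drift $-(N-1)Z_s\,ds$.
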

The proof follows from a straightforward application of It\^o's formula, noticing that 
the bracket 
$\int_0^1 [W-W^{\top}](dt, d\alpha) Z_t \cdot
\int_0^1 Z_t^{\top} [W-W^{\top}](dt, d\alpha) Y_t(\alpha)$ is equal to 
\begin{equation}
\label{eq:cov:B:B bar}
= \int_0^1 ( [W-W^{\top}]\cdot [W-W^{\top}])(dt, d\alpha) Y_t(\alpha)
\\
=-2(N-1)\biggl( \int_0^1 \text{Id}_{N}\,  Y_t(\alpha) d\alpha \biggr) dt,
\end{equation}
since $[W \cdot W]_{i,j}(dt,d\alpha) = 
\sum_{k=1}^N [W_{i,k} \cdot W_{k,j}](dt,d\alpha)
= \delta_{i}^j dt d\alpha$
and $[W \cdot W^{\top}]_{i,j}(dt,d\alpha) = 
\sum_{k=1}^N [W_{i,k} \cdot W_{j,k}](dt,d\alpha)
= 
N
\delta_{i}^j dt d\alpha$.

The main feature is that $(B_{t})_{t \geq 0}$ is a Gaussian process (with values in $\R^{N \times N}$) with 
$\E [ B_{t}^{i,k} B_{t}^{j,\ell} ]
= t (\delta_{(i,k)}^{(j,\ell)}- \delta_{(i,k)}^{(\ell,j)})$ as covariance. 
In particular, $((B^{i,j}_{t})_{1 \leq i < j \leq N})_{t \geq 0}$ is a standard Brownian motion with values in $\R^{N(N-1)/2}$. 
The matrix valued process $B$ thus corresponds to the Brownian motion on the set ${\mathcal A}_{N}(\R)$ of antisymmetric matrices. 
Recalling that ${\mathcal A}_{N}(\R)$ 
is the Lie algebra of the special orthogonal group, 
this allows to identify $(Z_{t})_{t \geq 0}$ with the right Brownian motion on $\SOR{N}$ (see e.g. Chapter V in Rogers and Williams \cite{roge:will:85} and Chapter VII in Franchi and Le Jan \cite{fran:leja:12}). 

\subsection{Conditional representation of the transition density}
Throughout the paper, we shall assume that the centering condition
\begin{equation}
\label{CENTERING}
\E[X_0]=0
\end{equation}
is in force. Actually, there is no loss of generality since, 
whenever $\E[X_{0}] \not =0$,
\eqref{lan3} ensures that, 
for all $t\ge 0, \E[X_t]=\E[X_0] $ and that $(X_t-\E[X_t])_{t\ge 0}$ also solves the equation.

The main representation of the conditional density is then the following:
\begin{prop}
\label{exp}
Assume that $X_0$ is not a Dirac mass and is centered. Then
for all $t>0$, the conditional law of $X_{t}$ given $X_{0}=x_0$ has a density, which can be expressed as
\begin{equation}
\label{eq:25:12:5}
f_{x_0}(t,v) 
= \E \biggl[ (2 \pi)^{-N/2} {\rm det}^{-1/2} ( C_t)
\exp \left( - \frac{1}{2} \bigl\langle v - Z_{t} x_{0}, C_t^{-1} \bigl(v - Z_{t} x_{0} \bigr) \bigr\rangle \right) 
\biggr], 
\end{equation}
for all $v \in \R^N$,
where 
\begin{equation}
\label{eq:cove}
C_{t} = \int_{0}^t Z_{t}Z_{s}^{\top} \bigl(\E[|X_{s}|^2]{\rm Id}_N-\E[X_s \otimes X_{s}] \bigr) \bigl(Z_{t}
{Z}_{s}^{\top}\bigr)^{\top}ds. 
\end{equation}
\end{prop}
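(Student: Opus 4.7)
The plan is to exploit the product representation from Lemma~\ref{LEMME_RES} and to show that, conditionally on the sigma-algebra $\mathcal{G}:=\sigma(Z_s:s\ge 0)$, the vector $X_t$ is Gaussian with mean $Z_tX_0$ and covariance $C_t$; the representation \eqref{eq:25:12:5} will then follow by integrating against the law of $Z$.

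Set $\xi(ds,d\alpha):=2^{-1/2}(W-W^{\top})(ds,d\alpha)$ and decompose
$\xi(ds,d\alpha)=dB_s\,d\alpha+\beta(ds,d\alpha)$, where $dB_s=\int_0^1 \xi(ds,d\alpha)$ matches \eqref{DEF_B} and $\beta$ is the $\alpha$-centered residual. A covariance check shows that $\beta$ is uncorrelated with, hence by joint Gaussianity independent of, $B$. As $(Z_s)$ solves the linear SDE \eqref{resolvent3} driven by $B$, it is a functional of $B$, so $\beta$ is independent of $\mathcal{G}$ and of $X_0$ (the latter being independent of $W$ by construction). Conditioning on $X_0=x_0$, by \eqref{lan4} we may write $X_t=Z_t(x_0-M_t)$ with
\[M_t:=\int_0^t \int_0^1 Z_s^{\top} \xi(ds,d\alpha) Y_s(\alpha).\]
The crucial step is that the centering \eqref{CENTERING}, propagated to $\E_\alpha[Y_s]=\E[X_s]=0$ for all $s\ge 0$ (take expectations in \eqref{lan} and use $\mathcal{L}(X)=\mathcal{L}_\alpha(Y)$), kills the $dB_s\,d\alpha$ contribution, so that
\[M_t=\int_0^t\int_0^1 Z_s^{\top} \beta(ds,d\alpha) Y_s(\alpha).\]
The integrand being $\mathcal{G}$-measurable (with $Y_s(\alpha)$ deterministic on $\Omega$) and $\beta$ being Gaussian and independent of $\mathcal{G}$, this is a Wiener integral conditionally on $\mathcal{G}$. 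Therefore $M_t$ is conditionally Gaussian given $\mathcal{G}$, with zero mean.

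It\^{o}'s isometry, combined with the covariance identity \eqref{eq:cove:W moins W bar} and the orthogonality $Z_sZ_s^{\top}=\text{Id}_N$, yields
\[\E[M_t\otimes M_t\mid\mathcal{G}]=\int_0^t Z_s^{\top}\bigl(\E[|X_s|^2]\text{Id}_N-\E[X_s\otimes X_s]\bigr)Z_s\,ds.\]
Hence, given $X_0=x_0$ and $\mathcal{G}$, the vector $X_t=Z_t(x_0-M_t)$ is Gaussian with mean $Z_tx_0$ and covariance $Z_t\bigl(\int_0^t Z_s^{\top}(\cdots)Z_s\,ds\bigr)Z_t^{\top}=C_t$ as in \eqref{eq:cove}. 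Integrating this conditional Gaussian density against the law of $\mathcal{G}$ (which is independent of $X_0$) delivers \eqref{eq:25:12:5}, provided $C_t$ is a.s. positive definite.

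The main technical obstacle is this last invertibility. The matrix $\Lambda_s:=\E[|X_s|^2]\text{Id}_N-\E[X_s\otimes X_s]$ is always positive semi-definite, and strictly so whenever the law of $X_s$ is not supported on a line, so the non-degenerate case is immediate. In the degenerate regime, where $X_0$ lies on a line but is not a Dirac mass, $\Lambda_s$ has constant rank $N-1$, and one must exploit the random rotations $(Z_s^{\top})$ to accumulate enough independent directions on $[0,t]$ to force $C_t$ to have full rank almost surely. This step relies on the non-degeneracy of the Brownian motion on $\SOR{N}$ and is the only ingredient that would require an argument beyond the setup provided in the excerpt.
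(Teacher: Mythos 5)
Your argument tracks the paper's own proof closely. The paper introduces $\bar B_t = 2^{-1/2}\int_0^t\int_0^1 [W-W^{\top}](ds,d\alpha)Y_s(\alpha)$ and proves in Lemma~\ref{lem:independence} that $B$ and $\bar B$ are independent centred Gaussian processes (hence $Z$, a functional of $B$, is independent of $\bar B$), the vanishing of their cross-covariance being exactly the statement $\E_\alpha[Y_s]=\E[X_s]=0$. Your decomposition $\xi(ds,d\alpha)=dB_s\,d\alpha+\beta(ds,d\alpha)$, the orthogonality of $\beta$ to $B$, and the observation that the centering removes the $dB$-part of $M_t$ are the same idea phrased in slightly different language; the conditional isometry you invoke then reproduces the paper's computation \eqref{barb2} of $d\langle\bar B\rangle_s/ds=\Lambda_s$ and, after conjugation by $Z_t$, gives \eqref{eq:cove}. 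So the body of the proof is correct and is essentially the paper's argument.

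Your last paragraph, however, misdiagnoses the invertibility issue. You assert that when the law of $X_0$ is supported on a line, $\Lambda_s$ has constant rank $N-1$, so that one must appeal to the non-degeneracy of the Brownian motion on $\SOR{N}$ to get $C_t$ invertible. That is not the case: the explicit formula \eqref{DEV_COV} proved in Proposition~\ref{prop:24:7:1} shows that
\begin{equation*}
\langle\xi,\Lambda_t\xi\rangle\ \ge\ \bigl(1-\tfrac1N\bigr)\bigl(1-e^{-2Nt}\bigr)\,\vert\xi\vert^2\,\E\bigl[\vert X_0\vert^2\bigr]
\end{equation*}
for every $\xi$, including in the degenerate case $\overline\lambda=1$. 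Thus $\Lambda_s$ is positive definite for every $s>0$ (the nonlinear dynamics instantaneously spread the mass into all directions), only $\Lambda_0$ has rank $N-1$. Since $Z_tZ_s^{\top}\in\SOR{N}$, it follows that $C_t=\int_0^t (Z_tZ_s^{\top})\Lambda_s(Z_tZ_s^{\top})^{\top}\,ds\ \ge\ \bigl(\int_0^t c_s\,ds\bigr)\textrm{Id}_N$ with $c_s=(1-\tfrac1N)(1-e^{-2Ns})\E[\vert X_0\vert^2]>0$, so $C_t$ is positive definite for every $t>0$, surely and not merely almost surely, and no property of the rotation process beyond orthogonality is used. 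The ``extra accumulation of directions'' you call for is not needed; invertibility of $C_t$ is a deterministic consequence of the flow of $\Lambda$.
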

The proof of Proposition \ref{exp} is postponed to Section \ref{SEC_DENS}. 

From the above expression of the (stochastic) covariance matrix $C_t$, we introduce the (deterministic) matrix
\begin{equation}
\label{DEF_LAMBDA}
\Lambda_s:=\E[|X_{s}|^2]{\rm Id}_N-\E[X_s  \otimes X_{s}].
\end{equation}

The matrix $\Lambda_s$ then  plays a key role for the control of the non-degeneracy of the diffusion matrix 
$\bar a(s,v)$, which, by \eqref{eq:a:pi}, reads
\begin{equation*}
\bar{a}(s,v) 
=\int_{\R^N}a(v-v_*)f(s,v_*)dv_*=\E \bigl[|X_s-v|^2{\rm Id}_N-(X_s-v) \otimes (X_s-v) \bigl],\quad v\in \R^N. 
\end{equation*}
Since, for all $s\ge 0$, $\E[X_s]=\E[X_0]=0$, we get that for all $v\in \R^N $:
\begin{equation*}
\bar a(s,v)=\Lambda_s-\bigl(2\E[\langle X_s,v\rangle ] {\rm Id}_N- \E[X_s  \otimes v +v \otimes X_s ]
\bigr)+a (v)=\Lambda_s+a (v),
\end{equation*}
so that 
\begin{equation}
\forall \xi \in \R^N,\ \langle \bar a(s,v)\xi,\xi\rangle=\langle\Lambda_s\xi,\xi\rangle+ \langle a(v)\xi,\xi\rangle \ge \langle\Lambda_s\xi,\xi\rangle,\label{DOM_A_LAMBDA}
\end{equation}
where we used that $a $ is positive semidefinite for the last inequality.

The behavior of $\Lambda_s $ can be summarized with the following result.

\begin{prop}
\label{prop:24:7:1}
Assume that $X_{0}$ is not a Dirac mass and is centered. Then,
for any $t>0$, and 
for all $\xi \in \R^N$,
\begin{equation*}
\begin{split}
\Psi_N(t,\overline{\lambda})
\vert \xi \vert^2
\E \left[ \vert X_{0} \vert^2 \right]
\le \langle \xi, \Lambda_t
 \xi \rangle 
 \leq \Psi_N(t,\underline{\lambda}) 
 \vert \xi \vert^2
\E \left[ \vert X_{0}  \vert^2 \right].
\end{split}
\end{equation*}
where for all $(t,\beta)\in \R^+\times [0,1] $,
$$\Psi_N(t,\beta):= (1-1/N)(1-\exp(-2Nt))+(1-\beta)  \exp(-2Nt)  ,$$ 
and
\begin{equation*}
0\le \underline{ \lambda}
:=\inf_{\xi\in \R^N,|\xi|=1} \frac{\E[|\langle \xi,X_0
\rangle|^2]}{\E[|X_0
|^2]}\le 
\sup_{\xi\in \R^N,|\xi|=1} \frac{\E[|\langle \xi,X_0
\rangle|^2]}{\E[|X_0
|^2
]}= : \overline{\lambda}
\le 1.
\end{equation*}
\end{prop}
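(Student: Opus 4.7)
The plan is to obtain a closed-form expression for $\Lambda_t$ by solving a linear ODE for the covariance matrix $\Sigma_t := \E[X_t \otimes X_t]$, and then read off the two-sided bound by evaluating on unit vectors. Setting $E := \E[|X_0|^2]$, I will first establish the conservation of energy $\E[|X_t|^2] \equiv E$, so that $\Lambda_t = E\,\mathrm{Id}_N - \Sigma_t$; then derive and solve an ODE for $\Sigma_t$; and finally invoke the definitions of $\underline{\lambda}$ and $\overline{\lambda}$.

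Both steps are based on It\^o's formula applied along the SDE \eqref{lan3}. The centering $\E[X_s]=0$, preserved in time (as noted right after \eqref{CENTERING}), together with the exchangeable coupling $\mathcal{L}(X_s)=\mathcal{L}_{\alpha}(Y_s)$, reduces the drift to $-(N-1)X_s\,ds$. The quadratic variation contributes $\bar a(s, X_s)\,ds$ (this is the content of the proof of Lemma \ref{l1}), and the key input is the decomposition $\bar{a}(s,v) = \Lambda_s + a(v)$ noted in the excerpt just before \eqref{DOM_A_LAMBDA}. Taking the trace and using $\mathrm{tr}(a(v))=(N-1)|v|^2$ one gets $\E[\mathrm{tr}(\bar a(s,X_s))] = 2(N-1)\E[|X_s|^2]$, which exactly cancels the drift contribution in $\tfrac{d}{dt}\E[|X_t|^2]$, yielding the conservation of energy. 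The analogous It\^o computation applied to the tensor $X_t \otimes X_t$ gives
\[
\Sigma_t' \;=\; -2(N-1)\Sigma_t + \E\bigl[\bar a(t, X_t)\bigr],
\]
and the identity $\E[a(X_t)] = E\,\mathrm{Id}_N - \Sigma_t = \Lambda_t$ produces the clean cancellation $\E[\bar a(t, X_t)] = 2\Lambda_t = 2(E\,\mathrm{Id}_N - \Sigma_t)$. Hence
\[
\Sigma_t' \;=\; -2N\,\Sigma_t + 2E\,\mathrm{Id}_N, \qquad \Sigma_0 = \E[X_0 \otimes X_0],
\]
which integrates to $\Sigma_t = e^{-2Nt}\Sigma_0 + (E/N)(1-e^{-2Nt})\,\mathrm{Id}_N$.

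Consequently $\Lambda_t = E\bigl(\tfrac{N-1}{N}(1-e^{-2Nt}) + e^{-2Nt}\bigr)\mathrm{Id}_N - e^{-2Nt}\Sigma_0$. For any $|\xi|=1$, the definitions of $\underline{\lambda}$ and $\overline{\lambda}$ read $\underline{\lambda} E \le \langle \xi, \Sigma_0 \xi\rangle \le \overline{\lambda} E$, and substituting into the previous expression then regrouping terms gives exactly $\Psi_N(t,\overline{\lambda})\,E \le \langle \xi, \Lambda_t \xi\rangle \le \Psi_N(t, \underline{\lambda})\,E$. Homogeneity in $\xi$ extends the bound to all of $\R^N$. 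The main obstacle, and really the only non-routine step, is the derivation of the matrix ODE: the drift reduction relies on the centering of $X$, and the clean cancellation $\E[\bar a(t,X_t)] = 2\Lambda_t$, which keeps the equation linear in $\Sigma_t$ alone rather than coupled to higher moments, is the algebraic miracle that makes the argument go through; it follows immediately from the decomposition $\bar a(s,v) = \Lambda_s + a(v)$ already available in the text.
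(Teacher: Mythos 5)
Your proof is correct and reaches the paper's own closed-form expression for $\Lambda_t$, the only difference being bookkeeping: you derive the matrix ODE $\Sigma_t' = -2N\,\Sigma_t + 2E\,\mathrm{Id}_N$ for $\Sigma_t=\E[X_t\otimes X_t]$ via the decomposition $\bar a(s,v)=\Lambda_s+a(v)$ and the identity $\E[a(X_t)]=\Lambda_t$, whereas the paper works with the scalar ODE for $\E[\langle\xi,X_t\rangle^2]$ obtained by a direct term-by-term computation from \eqref{lan}. Both routes rest on the same two facts (centering and energy conservation) and lead to the same cancellation $\E[\bar a(t,X_t)]=2\Lambda_t$, so the argument is essentially the same; your use of the pre-established decomposition packages the cancellation a bit more transparently, at the modest cost of invoking the matrix-valued It\^o formula rather than its scalar projections.
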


Proposition \ref{prop:24:7:1} will be proved in the next section. For any $t >0$, 
it provides a lower bound for the spectrum of $\Lambda_{t}$. There are two cases.  
If $\overline{\lambda} < 1$, letting $\overline{\eta} := 
(1-\overline{\lambda}) \wedge (1-1/N) > 0$
(with the standard notations $a \wedge b :=\min(a,b)$
and $a \vee b := \max(a,b)$), it holds that, 
for any $t\geq 0$ and $\xi \in \R^N$,
\begin{equation} \label{eli}
 \langle \xi,\Lambda_t \xi \rangle \geq
\overline{\eta} \vert \xi \vert^2\E \left[ \vert X_{0} 
\vert^2 \right].
\end{equation}
so that $\Lambda_{t}$ is \textit{non-degenerate}, uniformly in time and space.

If $\overline{\lambda} =1$, i.e. $X_0 $ is embedded in a line, then for any $t> 0$ and $\xi \in \R^N$,
\begin{equation*}
\langle \xi,\Lambda_t \xi \rangle \geq
(1-1/N) ( 1 - \exp(-2Nt)  ) \vert \xi \vert^2
\E \left[ \vert X_{0} 
\vert^2 \right],
\end{equation*}
so that $\Lambda_{t}$ is non-degenerate in positive time, uniformly on any $[\varepsilon,+\infty) \times \R^N$, 
$\varepsilon >0$. 
For $t$ small, the lower bound for the spectrum behaves as $2(N-1)t$, so that $\Lambda_{t}$
\textit{degenerates} in small time.

In the following, we will call the case $\overline{ \lambda}<1$ (resp. $\overline{\lambda}=1$)
\textit{non degenerate} 
(resp. \textit{degenerate}).
\begin{rmk}
Equations \eqref{DOM_A_LAMBDA} and \eqref{eli} entail and extend to arbitrary dimension the previous non-degeneracy result of Desvillettes and Villani \cite{DesVillani:00} (Proposition 4) on the diffusion matrix  $\bar a$.
\end{rmk}

\subsection{Estimates in the non-degenerate case}

When $\overline{\lambda} <1$, 
the spectrum of $C_{t}$ in 
\eqref{eq:cove} can be easily controlled 
since $Z_{t} Z_{s}^{-1} = Z_{t} Z_{s}^{\top} \in \SOR{N}$. 
In such a case, we then obtain from \eqref{eq:25:12:5} the following first result for the conditional density of the Landau SDE:
\begin{thm} \label{bds}
Assume that $X_0$ is not a Dirac mass, is centered with  variance 1, 
and its law is not supported on a line.
Then, for all $t>0$ and $v \in \R^N$,
\begin{equation*}
(2 \pi \underline{\eta} t )^{-N/2}
\E\left [\exp \left( - \frac{\vert v-Z_tx_0 \vert^2}{2\overline{\eta} t }  \right) \right]\leq f_{x_0}(t,v) \leq (2 \pi 
\overline{\eta} t)^{-N/2}
\E\left[\exp \left( - \frac{\vert v -Z_tx_0\vert^2}{2\underline{\eta} t}  \right)\right],
\end{equation*}
where 
$\overline{\eta} := (1-\overline{\lambda}) \wedge (1-1/N) \leq \underline{\eta} := 
(1-\underline{\lambda}) \vee (1-1/N)$.
\end{thm}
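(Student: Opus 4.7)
The plan is to deduce Theorem~\ref{bds} directly from the conditional Gaussian representation \eqref{eq:25:12:5} of Proposition~\ref{exp}, by sandwiching the random covariance matrix $C_t$ almost surely between two deterministic scalar multiples of the identity. Concretely, I aim to prove that $\overline{\eta}\,t\,\text{Id}_N \preceq C_t \preceq \underline{\eta}\,t\,\text{Id}_N$ almost surely. Once this is available, the two inequalities of the theorem follow from the monotonicity of the Gaussian density in its covariance, applied pointwise before taking $\E[\cdot]$.

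The preparatory step is to translate Proposition~\ref{prop:24:7:1} into uniform-in-$s$ spectral bounds on $\Lambda_s$. Under $\E[|X_0|^2]=1$, that proposition gives $\Psi_N(s,\overline{\lambda})|\xi|^2 \leq \langle \xi,\Lambda_s\xi\rangle \leq \Psi_N(s,\underline{\lambda})|\xi|^2$. A one-line computation yields $\partial_s \Psi_N(s,\beta) = 2N(\beta - 1/N)e^{-2Ns}$, so $s\mapsto \Psi_N(s,\beta)$ is monotone and its range on $[0,\infty)$ is the closed interval with endpoints $\Psi_N(0,\beta)=1-\beta$ and $\Psi_N(\infty,\beta)=1-1/N$. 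Consequently $\Psi_N(s,\overline{\lambda}) \geq (1-\overline{\lambda})\wedge(1-1/N) = \overline{\eta}$ and $\Psi_N(s,\underline{\lambda}) \leq (1-\underline{\lambda})\vee(1-1/N) = \underline{\eta}$ uniformly in $s\geq 0$; the assumption that $X_0$ is not supported on a line gives $\overline{\lambda}<1$, hence $\overline{\eta}>0$.

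To pass from $\Lambda_s$ to $C_t$, I would use that $Z_tZ_s^\top \in \SOR{N}$ is an isometry of $\R^N$, so conjugation by $Z_tZ_s^\top$ preserves the spectrum:
\begin{equation*}
\overline{\eta}|\xi|^2 \leq \bigl\langle \xi,\, Z_tZ_s^\top \Lambda_s (Z_tZ_s^\top)^\top \xi\bigr\rangle \leq \underline{\eta}|\xi|^2,\qquad \xi\in\R^N.
\end{equation*}
Integrating in $s$ over $[0,t]$ in the definition \eqref{eq:cove} yields the pathwise sandwich $\overline{\eta}\,t\,\text{Id}_N \preceq C_t \preceq \underline{\eta}\,t\,\text{Id}_N$. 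Substituting back into \eqref{eq:25:12:5}, the normalizing factor obeys $(2\pi\underline{\eta}t)^{-N/2} \leq (2\pi)^{-N/2}\det^{-1/2}(C_t) \leq (2\pi\overline{\eta}t)^{-N/2}$, while the quadratic form in the exponent satisfies $|v-Z_tx_0|^2/(2\underline{\eta}t) \leq \tfrac{1}{2}\langle v-Z_tx_0, C_t^{-1}(v-Z_tx_0)\rangle \leq |v-Z_tx_0|^2/(2\overline{\eta}t)$. Pairing the matching inequalities inside the expectation produces both sides of the claim.

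I do not expect any serious obstacle: the whole argument is essentially a clean corollary of Propositions~\ref{exp} and~\ref{prop:24:7:1}. The only substantive ingredient is the orthogonality of $Z_tZ_s^\top$, which decouples the spectrum of the conjugated $\Lambda_s$ from the random Brownian rotations and is precisely the reason the non-degenerate case admits such a global Gaussian control — by contrast, in the degenerate case $\overline{\lambda}=1$ treated later in the paper, this decoupling breaks down and $\Psi_N(s,\overline{\lambda})$ collapses linearly as $s\to 0^+$, which is where the real difficulty will live.
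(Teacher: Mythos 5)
Your proposal is correct and follows exactly the route the paper intends (the paper leaves this proof implicit, stating only that "the spectrum of $C_t$ ... can be easily controlled since $Z_t Z_s^\top \in \SOR{N}$" right before the theorem). You have correctly filled in the details: the monotonicity of $s\mapsto \Psi_N(s,\beta)$ via the sign of $\beta-1/N$, the resulting uniform-in-$s$ sandwich $\overline{\eta}\,\mathrm{Id}_N \preceq \Lambda_s \preceq \underline{\eta}\,\mathrm{Id}_N$, the invariance of the spectrum under the orthogonal conjugation by $Z_tZ_s^\top$, and the pointwise comparison of the resulting Gaussian kernels before taking expectation.
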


\begin{rmk}
\label{DENS_EST_RK_ISO}
Observe that, since $(Z_s)_{s\ge 0}$ defines an isometry, 
the off-diagonal cost $\vert v-Z_tx_0 \vert^2$ may be rewritten 
$\vert Z_{t}^{\top}v- x_0 \vert^2$.

This formulation may be more adapted than the previous one 
when integrating the conditional density with respect to the initial law of $X_0$. 
\end{rmk}
Now, exploiting the \textit{Aronson like} heat kernel bounds for the marginal 
density of 
the rotation process $(Z_t)_{t \geq 0}$, see e.g. Varopoulos \textit{et al.} \cite{varo:salo:coul:92} or Stroock \cite{stro:06}, we actually derive in Section 
\ref{SEC_NON_DEG}
the following control:

\begin{thm}[Explicit bounds for the conditional density]
\label{bds_EXPL_UE}
Under the assumptions of Theorem \ref{bds},  there exists $C:=C(N)\ge 1 $
 such that, for all $t>0$, $x_{0},v\in \R^N$, 
\begin{equation}
\label{LOSS_MASS_UE}
\frac{\delta_t^{N-1}}{C t^{N/2}} \exp \bigl( - C \frac{I}{t} \bigr) \leq f_{x_{0}}(t,v)
\leq \frac{C \delta_t^{N-1}}{t^{N/2}} \exp \bigl( - \frac{I}{Ct} \bigr),
\end{equation}
where
\begin{equation*}
\delta_t = 
\frac{1 \wedge  \Bigl(  \frac{\displaystyle t^{1/2}}{\displaystyle 
1 \vee (
\vert x_{0} \vert \wedge \vert v \vert)} 
\Bigr)}{1 \wedge t^{1/2}}, \quad \textrm{and} \quad
I = \big\vert \vert v \vert- \vert x_0 \vert \big\vert^2+
(1 \wedge \vert v \vert \wedge \vert x_0 \vert)^2
\vert \frac{v}{|v|}-\frac{x_0}{\vert x_0\vert}\vert^2. 
\end{equation*}
If $|x_0|\wedge |v|\le 1$, then
$\delta_t$ is equal to $1$ and $I$ can be chosen as $I=|x_0-v|^2$,
which corresponds to a \textit{usual} Gaussian estimate. 
\end{thm}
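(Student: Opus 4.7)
The plan is to upgrade Theorem \ref{bds} into the explicit bounds \eqref{LOSS_MASS_UE} by estimating, up to universal constants depending only on $N$, the expectation $\E[\exp(-|v-Z_t x_0|^2/(ct))]$ appearing there. First, for $v, x_0 \neq 0$, I would use the identity
\begin{equation*}
|v - Z_t x_0|^2 = \bigl(|v| - |x_0|\bigr)^2 + |v||x_0|\,|u - Z_t u'|^2, \qquad u := v/|v|, \ u' := x_0/|x_0|,
\end{equation*}
which separates the deterministic radial cost from a tangential cost depending only on the sphere-valued random variable $w_t := Z_t u'$. A short It\^o calculation based on \eqref{resolvent3} and the covariance \eqref{DEF_B} shows that the quadratic covariation of $(w_t)$ equals the tangent-plane projector ${\rm Id} - w_t w_t^{\top}$ at $w_t$, so that $(w_t)_{t \geq 0}$ is a standard Brownian motion on $\Sph^{N-1}$; by rotational symmetry its law depends only on the geodesic distance from $u'$.

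The second step is to invoke the classical two-sided Aronson-type bounds on the compact manifold $\Sph^{N-1}$ (see e.g.\ \cite{varo:salo:coul:92,stro:06}),
\begin{equation*}
p^{\Sph^{N-1}}_{t}(u', w) \asymp (1 \wedge t)^{-(N-1)/2} \exp\bigl(- c^{-1} d(u', w)^2 / t \bigr),
\end{equation*}
with the heat kernel being comparable to a constant for $t \geq 1$, and to reduce, working in exponential coordinates based at $u'$, the tangential expectation to a Euclidean Gaussian integral of the form $\int_{\R^{N-1}} \exp(-\lambda|a - y|^2 - |y|^2/(ct))\,dy$ with $\lambda := |v||x_0|/(ct)$ and $a := u - u'$. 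A standard completion of the square then yields
\begin{equation*}
\E \bigl[\exp(-\lambda|u - w_t|^2)\bigr] \;\asymp\; \bigl(1 + \lambda (1 \wedge t)\bigr)^{-(N-1)/2} \exp\!\Bigl( -\frac{\lambda|u-u'|^2}{1 + \lambda (1 \wedge t)} \Bigr),
\end{equation*}
the matching lower bound being obtained by restricting the integration to a small geodesic ball around an almost-optimal direction and invoking the Aronson lower bound.

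It remains to match the above prefactor $(1 + \lambda(1 \wedge t))^{-(N-1)/2}$ and exponent $\lambda/(1 + \lambda(1 \wedge t))$ with $\delta_t^{N-1}$ and the tangential part of $I$, which I would do by splitting into the four sub-regimes defined by whether $|v| \wedge |x_0|$ is $\leq 1$ or $>1$ and whether $t$ is $\leq 1$ or $>1$. In the genuinely anisotropic regime $|v| \wedge |x_0| \geq 1$ and $t \leq 1$, one has $\lambda(1 \wedge t) \asymp |v||x_0|$, so the prefactor becomes $\asymp (|v||x_0|)^{-(N-1)/2}$ and the exponent $\asymp |u - u'|^2/(ct)$, to be compared respectively with $\delta_t^{N-1} = (|v|\wedge|x_0|)^{-(N-1)}$ and $(1 \wedge |v|\wedge|x_0|)^2|u-u'|^2/(ct) = |u-u'|^2/(ct)$. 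In the regime $|v| \wedge |x_0| \leq 1$, $\delta_t = 1$ and the width of the tangential Gaussian is bounded below, so one recovers the ``usual'' Gaussian estimate in $|v - x_0|^2$ mentioned at the end of the statement; for $t \geq 1$ the essentially uniform heat kernel simplifies the computation. The main technical difficulty in this matching step is the absorption of the polynomial mismatch $((|v|\vee|x_0|)/(|v|\wedge|x_0|))^{(N-1)/2}$ between the natural prefactor $(|v||x_0|)^{-(N-1)/2}$ and $\delta_t^{N-1}$ into the exponential decay $\exp(-(|v|-|x_0|)^2/(ct))$; this trade off, valid thanks to elementary inequalities of the form $\log(M/m) \lesssim (M-m)^2/t$ in the relevant sub-regimes, is precisely what allows the replacement of the ``natural'' symmetric weight $|v||x_0|$ by the asymmetric $(1 \wedge |v|\wedge|x_0|)^2$ in front of $|u-u'|^2$ in the definition of $I$.
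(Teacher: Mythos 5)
Your decomposition $|v - Z_t x_0|^2 = (|v|-|x_0|)^2 + |v||x_0|\,|u - w_t|^2$ with $w_t := Z_t x_0/|x_0|$ is an exact algebraic identity (the cross terms cancel because $Z_t$ is an isometry), and your observation that $(w_t)_{t\ge 0}$ is a spherical Brownian motion started from $u' := x_0/|x_0|$ is correct: the quadratic covariation of $w_t$ computed from \eqref{resolvent3} and the covariance of $B$ is indeed $(\mathrm{Id}_N - w_t \otimes w_t)\,dt$, and the Stratonovich drift produces the correct mean-curvature term, so the generator is $\tfrac12 \Delta_{\Sph^{N-1}}$. From there, your reduction to estimating $\E\bigl[\exp(-\lambda|u - w_t|^2)\bigr]$ via the Aronson bounds on $\Sph^{N-1}$ and Gaussian convolution is sound, and the final matching — absorbing the mismatch $((|v|\vee|x_0|)/(|v|\wedge|x_0|))^{(N-1)/2}$ between the symmetric prefactor $(|v||x_0|)^{-(N-1)/2}$ and $\delta_t^{N-1}$ into the radial decay $\exp(-c(|v|-|x_0|)^2/t)$ — does work in all four sub-regimes (the critical observation being that this mismatch is only significant when $|v|$ and $|x_0|$ differ by a large factor, in which case the radial exponent dominates).

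This is, however, a genuinely different route from the paper's. The paper first applies the Aronson-type heat kernel bounds on the \emph{group} $\SOR{N}$ (equation \eqref{STROOCK_SALOFF}) to pass from the representation of Theorem~\ref{bds} to the integral \eqref{bds_SG_SON} over $\SOR{N}$, then needs Lemma~\ref{lem:carnot} (equivalence of the Carnot distance with the ambient matrix norm) and, crucially, the disintegration of the Haar measure through the parametrization $\phi: \Sph^{N-1}\times\SOR{N-1}\to\SOR{N}$ (equation~\eqref{DESINT_HAAR}) before it finally reduces to a spherical integral; the $\SOR{N-1}$-factor is then shown to contribute a constant. You skip all of that by noticing from the outset that the only random object entering Theorem~\ref{bds} is the sphere-valued $w_t = Z_t u'$, so that everything lives on $\Sph^{N-1}$ directly, and by replacing the paper's Lemma~\ref{proj} (an inequality splitting the chordal distance via a projection on a ball) by the exact identity above. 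What the paper's detour buys is a machinery on $\SOR{N}$ (Carnot metric, volume estimates, Haar disintegration) developed in Section~4 for its own interest; what your approach buys is a considerably shorter and more transparent argument for this particular theorem. The one step you state without proof — the two-sided Gaussian convolution estimate on the sphere $\E[\exp(-\lambda|u-w_t|^2)] \asymp (1 + \lambda(1\wedge t))^{-(N-1)/2}\exp\bigl(-\lambda|u-u'|^2/(1+\lambda(1\wedge t))\bigr)$ — requires some care because the sphere heat kernel in exponential coordinates is not exactly a Euclidean Gaussian (Jacobian, cut-locus), but this is a standard manipulation and does not constitute a gap.
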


We stress the fact that the above bounds are sharp.
The contribution in $\vert|v|-|x_0|\vert^2$ in $I$ corresponds to a `radial cost'
and the contribution in 
$\vert v/|v|- x_0/\vert x_0\vert\vert^2$ to a `tangential cost'. 
The term $(1 \wedge \vert v \vert \wedge \vert x_{0} \vert)^2$ reads as the inverse of the 
variance along tangential directions. It must be compared with the variance along tangential directions
in a standard Gaussian kernel, the inverse of which is of order
$(\vert v \vert \wedge \vert x_{0}\vert)^2$ as shown in Remark \ref{proj2} below.
This says that, when $\vert x_{0} \vert$ and $\vert v \vert$ are greater than $1$, 
$f_{x_{0}}(t,v)$ is super-diffusive in the tangential directions. This is in agreement with the 
observations made in Introduction: The non-Gaussian regime of the density for $x_{0}$ large occurs 
because of the super-diffusivity along iso-radial curves. 

Anyhow, it is worth mentioning that the two-sided bounds become Gaussian
when $t$ tends to $\infty$. Indeed, noting that $\delta_{t} \rightarrow 1$ as 
$t \rightarrow \infty$ and that the tangential cost 
$(1 \wedge \vert v \vert \wedge \vert x_0 \vert)^2
\vert v/|v|- x_0/\vert x_0\vert\vert^2$
is bounded by 4, \eqref{LOSS_MASS_UE} yields
\begin{equation}
\label{eq:asymp:two:sided}
\frac{1}{C t^{N/2}} \exp \bigl( - C \frac{\bigl\vert \vert x_{0}\vert - \vert v \vert \bigr\vert^2}{t}
 \bigr) \leq f_{x_{0}}(t,v)
\leq \frac{C}{t^{N/2}} \exp \bigl( - \frac{\bigl\vert \vert x_{0}\vert - \vert v \vert \bigr\vert^2}{Ct} \bigr),
\end{equation} 
for $t$ large enough (with respect to $\vert x_{0}\vert$, uniformly in $\vert v\vert$)
and for a new constant $C$ (independent of $\vert x_{0}\vert$ and $\vert v\vert$). This coincides with the asymptotic behavior of the 
$N$-dimensional Gaussian kernel: In the Gaussian regime, the 
variance along the tangential directions is $(\vert v\vert \wedge \vert x_{0} \vert)^2$, which is less than 
$\vert x_{0} \vert^2$ and which shows, in the same way as in \eqref{eq:asymp:two:sided}, that the 
Gaussian tangential cost is also small in front of $t$, uniformly in 
$v$. However, some differences persist asymptotically when $\vert x_{0}\vert$ is large.
Due to the super-diffusivity of the tangential directions in the Landau equation, 
the Landau tangential cost 
decays faster than the Gaussian one. Intuitively, the reason 
is that the `angle' of the Landau process $(X_{t})_{t \geq 0}$ reaches the uniform distribution on the sphere 
at a quicker rate than in the Gaussian regime. Clearly, 
the fact that the system forgets the initial angle of $x_{0}$ in long time could be recovered
from Theorem \ref{bds} by replacing (at least formally) 
$Z_{t}$ by a uniformly distributed random matrix on $\SOR{N}$.

Of course, when the initial mass is already uniformly 
distributed along the spheres centered at $0$, 
the marginal density of $(X_{t})_{t \geq 0}$ already behaves in finite time as if the transition 
density was Gaussian. We illustrate this property in the following corollary (the proof of which is deferred to the next section):

\begin{cor}
\label{thm:24:12:1}
Assume that $X_{0}$ admits an initial density of the radial form:
\begin{equation*}
f_{0}(x_{0}) = f(\vert x_{0} \vert),
\end{equation*}
for some Borel function $f : \R_{+} \mapsto \R_{+}$. Then, 
we can find a constant $C:=C(N) \geq 1$ such that, for all $t>0$, 
\begin{equation}
\label{eq:comparaison:noyaux:d}
\begin{split}
\frac{1}{C t^{N/2}}
\int_{\R^N} f_{0}(x_{0}) g_{N} \bigl( C\frac{x_{0} - v}{t^{1/2}} \bigr) dx_0
\leq
f_{t}(v) \leq \frac{C}{t^{N/2}}
\int_{\R^N} f_{0}(x_{0}) g_{N} \bigl( \frac{x_{0} - v}{Ct^{1/2}} \bigr) dx_0, 
\end{split}
\end{equation}  
where $g_{N}$ denotes the standard Gaussian kernel of dimension $N$ and where $f_{t}$
 is the solution of the Landau equation, which here reads
\begin{equation*}
f_{t}(v) = \int_{\R^N} f_{0}(x_{0}) f_{x_0}(t,v) dx_{0}. 
\end{equation*}
\end{cor}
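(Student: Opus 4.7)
The plan is to exploit the rotational invariance of $f_0$ to collapse the stochastic representation of Proposition \ref{exp} into a purely deterministic Gaussian convolution. The key observation is that, for a radial initial distribution, one has $\overline{\lambda}=\underline{\lambda}=1/N$, which forces $\Lambda_s$ and hence the conditional covariance matrix $C_t$ of \eqref{eq:cove} to be a scalar multiple of the identity.

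I would first check that, by rotational invariance of the law of $X_0$, the quantity $\E[\langle \xi,X_0\rangle^2]$ is independent of the unit vector $\xi\in\R^N$; summing over any orthonormal basis yields $\E[\langle \xi,X_0\rangle^2]=\E[|X_0|^2]/N$, so that $\overline{\lambda}=\underline{\lambda}=1/N$. Proposition \ref{prop:24:7:1} then gives $\Lambda_s=(1-1/N)\E[|X_0|^2]{\rm Id}_N$ for every $s\geq 0$. Since $Z_tZ_s^\top\in\SOR{N}$ is isometric, \eqref{eq:cove} simplifies to $C_t=c_N t\,{\rm Id}_N$, with $c_N:=(1-1/N)\E[|X_0|^2]$, which is deterministic. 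Inserting this into Proposition \ref{exp} yields
\begin{equation*}
f_{x_0}(t,v)=(2\pi c_Nt)^{-N/2}\,\E\bigl[\exp(-|v-Z_tx_0|^2/(2c_Nt))\bigr].
\end{equation*}

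I would then multiply by $f_0(x_0)$, integrate in $x_0$, exchange expectation and integral by Fubini (legitimate since the integrand is nonnegative), and change variables $y=Z_tx_0$. The Jacobian equals $1$ since $Z_t\in\SOR{N}$ is orthogonal, and the radiality of $f_0$ gives the pathwise identity $f_0(Z_t^\top y)=f_0(y)$. The integrand thus becomes deterministic, the outer expectation is trivial, and one obtains
\begin{equation*}
f_t(v)=(2\pi c_Nt)^{-N/2}\int_{\R^N}f_0(y)\exp\bigl(-|v-y|^2/(2c_Nt)\bigr)\,dy,
\end{equation*}
from which \eqref{eq:comparaison:noyaux:d} follows by absorbing $c_N$ into a constant $C=C(N)$.

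There is no substantial technical obstacle: in the radial case the argument in fact produces an \emph{equality} rather than mere two-sided bounds, the flexibility of the constant $C$ in the corollary's statement being more than enough to accommodate the factor $c_N$. The non-degeneracy assumption of Theorem \ref{bds} is automatically satisfied, for in dimension $N\geq 2$ a nontrivial radial density has support equal to a union of spheres, which cannot be contained in a line.
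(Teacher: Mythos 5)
Your proposal is correct and takes a genuinely different — and in fact more incisive — route than the paper's. The paper derives the corollary from the explicit two-sided bound of Theorem \ref{bds_EXPL_UE}: it integrates the Gaussian-type kernel of that theorem against the radial density, passes to polar coordinates, invokes heat-kernel estimates on $\Sph^{N-1}$ to control the tangential factor, and then checks term-by-term that the resulting expression is comparable (uniformly in the regime $\rho\lessgtr |v|$) with the same expression obtained from a standard Gaussian kernel. That argument is inherently comparative and only yields two-sided bounds up to multiplicative constants. You instead go back to Proposition \ref{exp} and observe that rotational invariance of $\mathcal{L}(X_0)$ forces $\E[X_0\otimes X_0]=\frac{1}{N}\E[|X_0|^2]\,\mathrm{Id}_N$, hence $\overline{\lambda}=\underline{\lambda}=1/N$ and, by the explicit formula \eqref{DEV_COV} (equivalently, the two sides of Proposition \ref{prop:24:7:1} coincide), $\Lambda_s=(1-1/N)\,\E[|X_0|^2]\,\mathrm{Id}_N$ for \emph{all} $s$. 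Since $Z_tZ_s^\top$ is orthogonal, $C_t$ is then a deterministic scalar multiple of $\mathrm{Id}_N$. The expectation over $Z_t$ is absorbed by the change of variable $y=Z_tx_0$ together with the radiality of $f_0$, and you obtain the exact Gaussian convolution formula $f_t(v)=(2\pi c_N t)^{-N/2}\int f_0(y)\,e^{-|v-y|^2/(2c_Nt)}\,dy$. This is stronger than the corollary's statement (an equality, not merely a two-sided bound) and it makes the mechanism transparent: for radial data the super-diffusivity along tangential directions is exactly averaged out. The only thing worth spelling out is that you implicitly use the normalization $\E[|X_0|^2]=1$ imposed in Theorem \ref{bds} (and hence in Theorem \ref{bds_EXPL_UE}, from which the corollary inherits its hypotheses); without that normalization your $c_N$ depends on $\E[|X_0|^2]$ rather than on $N$ alone, so the constant $C$ would not be of the form $C(N)$. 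Under that standing assumption your argument is complete.
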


\vspace{2pt}

To conclude this subsection, notice that the Gaussian regime (that corresponds to
$\vert x_{0} \vert \wedge \vert v \vert \leq 1$ in the statement of 
Theorem \ref{bds_EXPL_UE})  can be derived from \eqref{LOSS_MASS_UE} using the following Lemma and Remark.
\begin{lemma} \label{proj}
Let $x_0\in \R^N$ be given and
$\Pi_{B_N(0,|x_0|)}$ denote the orthogonal projection from $\R^N$ onto the ball $B_{N}(0,\vert x_{0}\vert )$ of center $0$ and radius $\vert x_{0} \vert$. 
Then, for all $v \in \R^N$ such that $\vert x_0 \vert <\vert v \vert$,
\begin{equation*}
\begin{split}
 &\vert v - \Pi_{B_N(0,|x_0|)}(v) \vert^2+\vert \Pi_{B_N(0,|x_0|)}(v) - x_{0} \vert^2
 \\
 &\hspace{15pt} \leq \vert v - x_{0} \vert^2 \leq  2\vert v - \Pi_{B_N(0,|x_0|)}(v) \vert^2+2 \vert \Pi_{B_N(0,|x_0|)}(v) - x_{0} \vert^2. 
\end{split}
\end{equation*}
\end{lemma}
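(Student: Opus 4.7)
The plan is to decompose $v - x_0$ through the projection $w := \Pi_{B_N(0,|x_0|)}(v)$ and to control the resulting cross term in both directions. Writing $v - x_0 = (v - w) + (w - x_0)$, one gets
\[
|v - x_0|^2 = |v - w|^2 + 2 \langle v - w, w - x_0 \rangle + |w - x_0|^2,
\]
so both inequalities of the lemma reduce to estimates on the scalar product $\langle v - w, w - x_0 \rangle$.

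For the upper bound, I would apply $2\langle v - w, w - x_0 \rangle \leq |v - w|^2 + |w - x_0|^2$, which follows from Cauchy--Schwarz combined with $2ab \leq a^2 + b^2$ and uses no special property of $w$. For the lower bound, it suffices to show $\langle v - w, w - x_0 \rangle \geq 0$. The cleanest argument is to invoke the variational characterization of the orthogonal projection onto a closed convex set $K$: for every $y \in K$, $\langle v - \Pi_{K}(v), y - \Pi_{K}(v) \rangle \leq 0$. Applied with $K = B_{N}(0, |x_0|)$ and $y = x_0 \in K$, this gives $\langle v - w, x_0 - w \rangle \leq 0$, hence $\langle v - w, w - x_0 \rangle \geq 0$. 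Alternatively, since $|v| > |x_0|$ one has the explicit formula $w = (|x_0|/|v|)\, v$, and a direct computation yields
\[
\langle v - w, w - x_0 \rangle = \bigl( 1 - |x_0|/|v| \bigr) \bigl( |x_0|\, |v| - \langle v, x_0 \rangle \bigr) \geq 0
\]
by Cauchy--Schwarz, giving a self-contained proof.

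No substantial obstacle arises: the content is purely elementary Euclidean geometry, the only point to notice being the non-negativity of the cross term, which reflects the convexity of the ball $B_N(0, |x_0|)$.
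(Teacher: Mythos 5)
Your proposal is correct and follows essentially the same route as the paper: expand $|v-x_0|^2$ via the decomposition through $w=\Pi_{B_N(0,|x_0|)}(v)$, obtain the lower bound from the nonnegativity of the cross term $\langle v-w, w-x_0\rangle$ (the variational characterization of projection onto a closed convex set), and the upper bound from the elementary estimate $2\langle a,b\rangle\le |a|^2+|b|^2$ (which the paper calls ``convexity''). Your alternative explicit verification $w=(|x_0|/|v|)v$ is a nice self-contained supplement but not a different method.
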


\begin{proof}
We write
\begin{equation*}
\begin{split}
\vert v -  x_{0} \vert^2 &= \vert v - \Pi_{B_N(0,|x_0|)}(v) + \Pi_{B_N(0,|x_0|)}(v) -  x_{0} \vert^2
\\
&= \vert v - \Pi_{B_N(0,|x_0|)}(v) \vert^2+ \vert \Pi_{B_N(0,|x_0|)}(v) - x_{0} \vert^2 \\
&\hspace*{10pt}+ 
2 \langle v - \Pi_{B_N(0,|x_0|)}(v), \Pi_{B_N(0,|x_0|)}(v) - x_{0} \rangle.  
\end{split} 
\end{equation*}
Now $\langle v - \Pi_{B_N(0,|x_0|)}(v), \Pi_{B_N(0,|x_0|)}(v) - x_{0} \rangle \geq 0$, by orthogonal projection on a closed convex subset, and the lower bound follows. By convexity, we obtain the upper bound.
\end{proof}

\begin{rmk} \label{proj2} Let us consider two given points $x_0,v\in \R^N $ such that $|x_0|\le |v| $.
Noticing that $\Pi_{B_N(0,|x_0|)}(v) = ( \vert x_{0} \vert / \vert v \vert) v$
and then that $\vert v - \Pi_{B_N(0,|x_0|)}(v) \vert = \vert v \vert - \vert x_{0} \vert$, we deduce from Lemma \ref{proj} that 
\begin{equation*}
\big\vert  \vert v \vert - \vert x_{0} \vert \big\vert^2+ 
\vert x_0\vert^2 \bigg\vert \frac{v}{\vert v \vert}
- \frac{x_{0}}{\vert x_0 \vert} 	\bigg\vert^2\leq \vert v - x_{0} \vert^2 \leq 2 \big\vert  \vert v \vert - \vert x_{0} \vert \big\vert^2 + 
2 \vert x_0\vert^2\bigg\vert \frac{v}{\vert v \vert}
 - \frac{x_{0}}{\vert x_0 \vert} 	\bigg\vert^2.   
\end{equation*}
In particular, when $|x_0|\le 1 $ we derive from \eqref{LOSS_MASS_UE} in Theorem \ref{bds_EXPL_UE} the usual two-sided Gaussian estimates. Now, if  $|v|\le |x_0| $ and $|v|\le 1 $, this still holds by symmetry. 
\end{rmk}

\subsection{Estimates in the degenerate case}
We now discuss the case when the initial condition lies in a straight line, which by rotation invariance can be assumed to 
be the first vector $e_{1}$ of the canonical basis. By Proposition \ref{prop:24:7:1}, we 
already know that the matrix $\Lambda_{t}$ (see \eqref{DEF_LAMBDA})
 driving the ellipticity of the covariance matrix 
$C_{t}$ (see \eqref{eq:cove}) becomes non-degenerate in positive time. 
This says that, after a positive time $t_{0}$, the system enters the same regime as the one discussed in Theorem 
\ref{bds_EXPL_UE}, so that the transition density of the process satisfies, after $t_{0}$, the bounds 
\eqref{LOSS_MASS_UE}. Anyhow, this leaves open the small time behavior of the transition kernel of the process.

Here, we thus go thoroughly into the analysis and specify both the on-diagonal rate of explosion and the off-diagonal decay of the conditional density in small time. Surprisingly, we show that the tail of the density looks much more like an exponential distribution rather than a Gaussian one. Precisely, we show that the off-diagonal decay of the density is of Gaussian type for `untypical' values only, which is to say that, for values where the mass is effectively located, the decay is of exponential type. Put it differently, the two-sided bounds we provide for the conditional density read as a mixture of exponential and Gaussian distributions.

\begin{thm}
\label{thm:bd:degenerate}
Assume that the initial distribution of $X_{0}$ is compactly supported by $e_{1}$, i.e. there exists $C_0>0$ such that $X_0\in [-C_0e_1,C_0e_1]$  a.s.
Then, 
there exists $
C:=C(C_0) >1$ such that, for $t \in (0,1/C]$: 
\begin{equation*}
\frac{1}{C t^{(N+1)/2}} \exp \bigl( - C I(t,x_{0},v) \bigr) \leq f_{x_{0}}(t,v)
\leq \frac{C}{t^{(N+1)/2}} \exp \bigl( - \frac{I(t,x_{0},v)}{C} \bigr),
\end{equation*}
where  
%
%
%
%
%
%
%
%
$$\displaystyle I(t,x_{0},v) =
\frac{\vert v^1 - x_{0}^1 \vert}{t} + 
\frac{ \vert v^1 - x_{0}^1 \vert^2}{t} 
+ \sum_{i=2}^{N}\frac{|v^i|^2}t.$$

%

\end{thm}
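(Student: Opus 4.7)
The proof starts from Proposition \ref{exp}: $f_{x_{0}}(t,v)$ is the expectation, over the rotation process, of a Gaussian density centered at $Z_{t}x_{0} = x_{0}^{1}u_{t}$ (with $u_{t} := Z_{t}e_{1}$) and with covariance $C_{t}$. Because $X_{0}$ is supported on $\R e_{1}$, the rotational symmetry of the Landau dynamics around $e_{1}$ forces $\E[X_{s}\otimes X_{s}]$ to be diagonal in $(e_{1},\ldots,e_{N})$, so that
\(
\Lambda_{s} = \lambda_{1}(s)\, e_{1}e_{1}^{\top} + \lambda_{2}(s)(\mathrm{Id}_{N} - e_{1}e_{1}^{\top})
\)
with $\lambda_{1}(s) = \E[|X_{s} - X_{s}^{1}e_{1}|^{2}]$. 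Proposition \ref{prop:24:7:1} applied with $\overline{\lambda} = 1$ yields $\lambda_{1}(s) \gtrsim s$ for small $s$, and a matching upper bound $\lambda_{1}(s) \lesssim s$ follows from a direct energy estimate on the SDE \eqref{lan3} (both the rotation of $x_{0}e_{1}$ by $Z_{s}$ and the stochastic integral contribute only $O(\sqrt{s})$ to the transverse component). Meanwhile $\lambda_{2}(s) \asymp 1$. This two-scale structure of $\Lambda_{s}$ is the source of the degeneracy.

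Setting $w_{s} := Z_{t}Z_{s}^{\top}e_{1}$ we rewrite
\[
C_{t} \;=\; \Bigl(\int_{0}^{t}\!\lambda_{2}(s)\,ds\Bigr) \mathrm{Id}_{N} \;+\; \int_{0}^{t}\bigl(\lambda_{1}(s)-\lambda_{2}(s)\bigr)\, w_{s}w_{s}^{\top}\,ds.
\]
Tube estimates for the Brownian motion on $\SOR{N}$ from Section \ref{SEC_DENS} give $\sup_{s\in[0,t]}|w_{s} - e_{1}|^{2} = O(t)$ outside an event of exponentially small probability, and on that event a pathwise computation produces the spectral picture: one eigenvalue of $C_{t}$ of order $t^{2}$ along a direction $O(\sqrt{t})$-close to $e_{1}$, and $N-1$ eigenvalues of order $t$ transversally. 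In particular $\det(C_{t}) \asymp t^{N+1}$, which explains the prefactor $t^{-(N+1)/2}$.

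The core step is the analysis of the expectation over $Z_{t}$. Decomposing $v - x_{0}^{1}u_{t}$ along $u_{t}$ (where $C_{t}^{-1}$ scales like $t^{-2}$) and its orthogonal complement (scale $t^{-1}$), the radial component $\langle v, u_{t}\rangle - x_{0}^{1}$ can be annihilated by forcing $u_{t}$ onto the codimension-one subset $\{u \in S^{N-1}: \langle v, u\rangle = x_{0}^{1}\}$. The closest such $u_{t}$ to $e_{1}$ sits at an angle $\theta^{*}$ satisfying $(\theta^{*})^{2} \asymp |v^{1}-x_{0}^{1}|/|x_{0}^{1}|$ in the small-bias regime (and saturating to a $\Theta(1)$ value otherwise). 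The short-time density of the angle of the Brownian motion on $S^{N-1}$ is comparable to $t^{-(N-1)/2}\theta^{N-2}\exp(-\theta^{2}/(2t))$, so the probabilistic cost of reaching angle $\theta^{*}$ is $(\theta^{*})^{2}/(2t)$, which is precisely the exponential term $|v^{1}-x_{0}^{1}|/t$ of $I$ in the small-bias regime. After biasing, the residual Gaussian cost $|v - x_{0}^{1}u_{t}|^{2}/t = (|v|^{2} - (x_{0}^{1})^{2})/t$, computed from $|u_{t}|=1$ and $\langle v, u_{t}\rangle = x_{0}^{1}$, accounts for the remaining two terms of $I$: it yields $|v^{1}-x_{0}^{1}|^{2}/t + \sum_{i \geq 2}|v^{i}|^{2}/t$ as soon as $|v^{1}-x_{0}^{1}|$ is of order $1$ or larger, and contributes at the same order as the rotation cost when $|v^{1}-x_{0}^{1}|$ is small.

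The upper bound is completed by pointwise minimization of the quadratic form over admissible $(Z_{s})_{s\leq t}$ combined with upper heat kernel bounds on $\SOR{N}$ from Section \ref{SEC_DENS}. The lower bound is obtained by localizing the expectation on a small tube around the extremal rotation trajectory and using matching lower tube estimates. The main technical obstacle is that the almost-degenerate direction of $C_{t}$ is itself random and depends on the whole path $(Z_{s})_{s\leq t}$: the pathwise decomposition of $v - Z_{t}x_{0}$ must be carried out with this random direction, and the saddle-point analysis in the rotation angle $\theta_{t}$ must be coupled to the realization of the full sample path of the Brownian rotation.
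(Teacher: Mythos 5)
Your high-level framework is in line with the paper's: use the conditional Gaussian representation of Proposition~\ref{exp}, identify the two-scale structure of $\Lambda_s$ (one eigenvalue $\asymp s$ along $e_1$, the others $\asymp 1$), and pay an exponentially small probability to bias the Brownian rotation. But the key mechanism you describe for producing the exponential term $|v^1-x_0^1|/t$ is not correct. You attribute it to tilting $u_t = Z_t e_1$ onto the set $\{u:\langle v,u\rangle = x_0^1\}$, so that the component of $v-x_0^1 u_t$ along the degenerate direction vanishes. This annihilation mechanism fails outright when $x_0^1=0$ (then $Z_t x_0 = 0$ and rotating $u_t$ has no effect on the center), it requires $|v|\geq |x_0^1|$ and the correct sign, and the angle $(\theta^\ast)^2\asymp |v^1-x_0^1|/|x_0^1|$ blows up as $x_0^1\to 0$; yet the theorem must hold uniformly over $x_0\in[-C_0e_1,C_0e_1]$, including $x_0^1=0$. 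The paper's mechanism is different and works for all such $x_0$: conditioning $B^{j,1}_s$ to stay near the line $s\xi/t$ costs $\exp(-c\xi^2/t)$ in probability and simultaneously lifts the degenerate eigenvalue $\bar C_t^{1,1}\approx\int_0^t\sum_j(B_s^{j,1})^2\,ds$ from its typical order $t^2$ to order $t\xi^2$. The radial Gaussian cost then drops from $|v^1-x_0^1|^2/t^2$ to $|v^1-x_0^1|^2/(t\xi^2)$, and optimizing over $\xi^2\in[Kt,1/K]$ gives $\xi^2\asymp |v^1-x_0^1|$ and total cost $\asymp|v^1-x_0^1|/t$. The transport of $x_0$ by $Z_t$ enters only as a bounded correction (because $|x_0|\leq C_0$), as in \eqref{EQ_MINO_1}, not as the driving effect. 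Concretely, with $x_0^1=0$ and $v=v^1e_1$, $v^1\in(t,1)$, your scheme (no annihilation possible, no covariance boost invoked) would leave you with the Gaussian cost $|v^1|^2/t^2$, which is strictly larger than the correct $|v^1|/t$; the covariance boost is indispensable.

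A secondary issue is the upper bound: ``pointwise minimization of the quadratic form over admissible $(Z_s)_{s\leq t}$'' combined with heat-kernel bounds on $\SOR N$ will not suffice, because $C_t$ is a functional of the entire path $(Z_s)_{s\leq t}$ and not of $Z_t$ alone, so the marginal heat-kernel estimates \eqref{STROOCK_SALOFF} give no control over its determinant or inverse. The paper instead performs a Gaussian integration by parts on the conditional density, reduces the upper bound to tail probabilities of $(Z_t\Gamma_t)^i$ (Lemma~\ref{LEMME_QUEUES}), and controls $\det(C_t)$ almost surely by the variational formula for the determinant together with the isometry of $Z_s$ (Proposition~\ref{PROP_CTR_DET}). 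These are nontrivial steps that your sketch does not account for.
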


The reason why the conditional density follows a mixture of exponential and Gaussian rates may be explained as follows in the simplest case when $x_{0}=0$.
The starting point is formula \eqref{eq:25:12:5} in Proposition \ref{exp}. When the initial condition is degenerate, the conditional covariance matrix $C_{t}$ in \eqref{eq:cove} has two scales.
As shown right below, the eigenvalues of $C_{t}$ 
along the directions $e_{2},\dots,e_{N}$ are of order $t$ whereas 
the eigenvalue $\lambda_{t}^1$ of $C_{t}$ along the direction $e_{1}$ is of order $t^2$
with large probability. Anyhow, with exponentially small probability, 
$\lambda_{t}^1$ is of order $t$: Precisely, the probability that it is of order $\xi t$ has logarithm of order $- \xi/t$
when $\xi \in (0,1)$. Such large deviations of $\lambda_{t}^1$ follow from large deviations of 
$(Z_{s})_{0 \leq s \leq t}$ far away from the identity.
This rough description permits to compare the contributions of typical and rare events
in the formula \eqref{eq:cove} for the density  $f_{x_{0}}(t,v)$, when 
computed at a vector $v$ parallel to the direction $e_{1}$. 
On typical scenarios,
the off-diagonal cost $\langle C_{t}^{-1}v,v\rangle$ in the exponential appearing in \eqref{eq:cove} is of order $\vert v \vert^2/t^2$. 
In comparison with, by choosing $\xi$ of order $\vert v \vert$, the events associated 
with large deviations of $C_t$ generate an off-diagonal cost $\langle C_{t}^{-1}v,v\rangle$ of order $\vert v\vert/t$ with 
an exponentially small probability of logarithmic order -$\vert v\vert/t$: 
The resulting contribution in the off-diagonal decay 
is order $\vert v \vert/t$, 
which is
 clearly smaller than $\vert v \vert^2/t^2$. 
This explains the exponential regime of $f_{x_{0}}(t,v)$. The Gaussian one follows from a threshold phenomenon: as $(Z_{s})_{0 \leq s \leq t}$
takes values in $\textrm{SO}_{N}(\R)$, there is no chance for its elements to exceed $1$ in 
norm. Basically, it means that, when $\vert v \vert$ is large, the best choice for $\xi$ is not $\vert v\vert$ but $1$: The corresponding off-diagonal cost is $\vert v\vert^2/t$, which occurs with probability of logarithmic order $-1/t$. This explains the Gaussian part of 
$f_{x_{0}}(t,v)$.

In the case when the conditioned initial position $x_{0}$ is not zero, specifically when it is far away from $0$, things become much more intricate as the transport of the initial position $x_{0}$ by $Z_{t}$ affects the density. This is the reason why we consider a compactly supported initial condition.  To compare with,
notice that, in the non-degenerate case, \eqref{LOSS_MASS_UE} gives Gaussian estimates 
when $x_{0}$ is restricted to a compact set. 
This is exactly what the statement of 
Theorem \ref{bds_EXPL_UE} says when $\vert x_{0}\vert \leq 1$, the argument working in the same way when 
$\vert x_{0}\vert \leq C_{0}$, for some $C_{0} > 1$.


\section{Conditional density of the Landau SDE: Derivation and Properties}
\label{SEC_DENS}

\subsection{Proof of Proposition \ref{exp}}
\label{SEC_RES_REP}
We claim
\begin{lemma} \label{l4}
\label{lem:independence}
Recall $X_0$ is centered. Letting
\begin{equation} \label{barb1}
\bar{B}_{t} = 2^{-1/2}\int_{0}^t \int_{0}^1 [W-W^{\top}](ds,d\alpha) Y_{s}(\alpha),
\end{equation}
the processes $(B_t)_{t\ge 0} $ and $(\bar B_t)_{t\ge 0} $  are independent.
Also, the processes $(Z_{t})_{t \geq 0}$ and $(\bar{B}_{t})_{t \geq 0}$ are independent. 
\end{lemma}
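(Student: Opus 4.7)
The plan is as follows. First I would show that the pair $((B_{t})_{t \geq 0}, (\bar B_{t})_{t \geq 0})$ is jointly Gaussian on $(\Omega, \mathcal{F}, \P)$. Then I would verify that all cross-covariance entries vanish, which by joint Gaussianity gives the independence of $(B_{t})_{t \geq 0}$ and $(\bar B_{t})_{t \geq 0}$. The independence of $(Z_{t})_{t \geq 0}$ and $(\bar B_{t})_{t \geq 0}$ will then be automatic, since by \eqref{resolvent3} the process $(Z_{t})_{t \geq 0}$ is $\sigma(B_{s}, s \geq 0)$-measurable.

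The starting observation is structural. Under the convention fixed in Subsection 2.1, $Y$ lives only on the auxiliary space $([0,1], \mathcal{B}([0,1]), d\alpha)$, so that for each fixed $(s,\alpha)$ the quantity $Y_{s}(\alpha)$ is deterministic with respect to $\omega \in \Omega$. Accordingly, both $B_{t}$ in \eqref{DEF_B} and $\bar B_{t}$ in \eqref{barb1} are deterministic linear functionals of the space-time white noise $(W - W^{\top})/\sqrt{2}$ on $\R_{+} \times [0,1]$, with respective integrands $\mathrm{Id}_{N}$ and $Y_{s}(\alpha)$. Hence any finite collection of their values is jointly Gaussian on $\Omega$.

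Next, using the It\^o isometry and the elementary covariance computation already carried out in \eqref{eq:cove:W moins W bar}, I would obtain
\begin{equation*}
\E\bigl[B_{t}^{i,j}\, \bar B_{s}^{k}\bigr] = \int_{0}^{t \wedge s} \biggl(\delta_{i}^{k} \int_{0}^{1} Y_{u}^{j}(\alpha)\,d\alpha - \delta_{j}^{k} \int_{0}^{1} Y_{u}^{i}(\alpha)\,d\alpha\biggr) du,
\end{equation*}
for every $t, s \geq 0$ and indices $i, j, k$. Since $\mathcal{L}_{\alpha}(Y_{u}) = \mathcal{L}(\mathcal{X}_{u})$, the inner integrals equal $\E[\mathcal{X}_{u}]$. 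Taking expectations in \eqref{lan} and using once more $\mathcal{L}_{\alpha}(Y) = \mathcal{L}(\mathcal{X})$ yields $\E[\mathcal{X}_{u}] = \E[X_{0}]$, which vanishes by the centering assumption \eqref{CENTERING} (as indicated just after that equation). Every cross-covariance entry is therefore zero, and joint Gaussianity delivers the independence of $(B_{t})_{t \geq 0}$ and $(\bar B_{t})_{t \geq 0}$.

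For the second assertion, since $(Z_{t})_{t \geq 0}$ is the (strong) solution of the linear matrix SDE \eqref{resolvent3} driven by $(B_{t})_{t \geq 0}$, it is $\sigma(B_{s}, s \geq 0)$-measurable, and independence of $(Z_{t})_{t \geq 0}$ and $(\bar B_{t})_{t \geq 0}$ follows at once. The only mild obstacle I anticipate is the book-keeping between the two spaces $\Omega$ and $[0,1]$: one has to invoke Fubini carefully in order to regard $Y_{s}(\alpha)$ as a deterministic integrand on $\Omega$ and thereby legitimately speak of joint Gaussianity on $(\Omega, \mathcal{F}, \P)$. This is the place where the product structure of the extended probability space enters the argument.
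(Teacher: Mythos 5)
Your proof is correct and follows essentially the same route as the paper: both show $Z$ is a measurable functional of $B$ (the paper expands $\tilde Z_t$ as an iterated-integral series in $B$, you invoke the strong-solution property of \eqref{resolvent3}), and both then reduce the question to a zero cross-covariance between the Gaussian processes $B$ and $\bar B$, which vanishes because $\E_\alpha[Y_u] = \E[X_0] = 0$. You are a bit more careful than the paper on one point worth noting: the paper simply remarks that ``both are Gaussian processes'' before concluding from a vanishing covariance, whereas independence really requires the pair $(B,\bar B)$ to be \emph{jointly} Gaussian; your observation that $Y_s(\alpha)$ is deterministic with respect to $\omega$, so that $B$ and $\bar B$ are both deterministic linear functionals of the single white noise $(W-W^\top)/\sqrt 2$, supplies exactly the missing justification. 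You also compute the cross-covariance directly rather than referring (as the paper does) to the bracket identity \eqref{eq:cov:B:B bar}, which is a cleaner route since that equation was established for a slightly different purpose (the It\^o correction in the product $Z_t\cdot\int Z_s^\top d\bar B_s$).
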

  
\begin{proof}
 We know that setting $\tilde Z_t:=\exp((N-1)t)Z_t $ then
\begin{equation*}
\begin{split}
\tilde Z_{t} &=\text{Id}_{N} + B_{t} + \int_{0}^t dB_{s} B_{s} + \dots
=\text{Id}_{N} + \sum_{n \geq 1}
\int_{0 \leq t_{n} \leq \dots \leq t_{1} \leq t} dB_{t_{1}} dB_{t_{2}} \dots dB_{t_{n}}.
\end{split}
\end{equation*}
Hence it suffices to show that $B$ and $\bar{B}$ are independent. 
As both are Gaussian processes, this can be easily proved by computing their covariance which turns out to be zero if 
$X_0$ is centered, see \eqref{eq:cov:B:B bar}.
\end{proof}

Recalling that we can rewrite $X_t$ as
\begin{equation} \label{lan7}
X_t=Z_t \biggl[X_0 -\int_0^t Z_s^{\top} d\bar{B}_s\biggr], \quad t \geq 0,
\end{equation}
$X_{0}$ being independent of $(B_{t},\bar{B})_{t \geq 0}$,
and using \eqref{eq:cove:W moins W bar} to compute the covariance matrix 
of the Gaussian process $(\bar{B}_{t})_{t \geq 0}$:
\begin{equation} \label{barb2}
\frac{d}{dt}
\E \bigl[ \bar{B}_{t} \bar{B}_{t}^{\top}
\bigr] = \int_{0}^1 \bigl\{\textrm{Id}_{N} \vert Y_{t}(\alpha) \vert^2 - 
Y_{t}(\alpha)  \otimes Y_{t}(\alpha) \bigr\}d\alpha = 
\E[\vert X_t \vert^2] \text{Id}_N  - \E[X_t \otimes X_t] =\Lambda_t,
\end{equation}
the existence of the transition density and the representation \eqref{eq:25:12:5} are 
 direct consequences of (\ref{lan7}) and Lemma \ref{l4}.
This proves Lemma \ref{LEMME_RES}.

\subsection{Additional properties on the resolvent process}
\label{EXTRA_PROP_Z}
We give in this paragraph some additional properties on the process $Z$ that are needed for the derivation of the density estimates.
We will make use of the following lemma whose proof can be found in Franchi and Le Jan \cite{fran:leja:12}, see Theorem VII.2.1 and Remark VII.2.6.
\begin{lemma}
\label{lem:27:12:1}
Given $t>0$, the process $(Z_{t} Z_{t-s}^{\top})_{0 \leq s \leq t}$ has the same law as the process $(Z_{s})_{0 \leq s \leq t}$.
\end{lemma}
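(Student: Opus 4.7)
The plan is a time-reversal argument at the level of the Stratonovitch SDE \eqref{resolvent3}. Set $\tilde{B}_{s}:=B_{t}-B_{t-s}$ for $s\in [0,t]$. Because $(B_{u})_{u\geq 0}$ has independent increments with values in the antisymmetric matrices, $(\tilde{B}_{s})_{0\leq s\leq t}$ is a Brownian motion on $\mathcal{A}_{N}(\R)$ with the same law as $(B_{s})_{0\leq s\leq t}$, and the same is true of $(-\tilde B_{s})_{0\leq s\leq t}$.

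I then introduce the auxiliary process $\hat{Z}_{s}:=Z_{t-s} Z_{t}^{\top}$, which satisfies $\hat{Z}_{0}=\text{Id}_{N}$, and derive its forward SDE in the $s$-variable. Starting from
\begin{equation*}
Z_{t-s}=Z_{t}-\int_{t-s}^{t} dB_{v}\circ Z_{v},
\end{equation*}
I substitute $v=t-r$, $r\in [0,s]$. Using that Stratonovitch integrals obey the ordinary change-of-variable rules, and that $dB_{t-r}=-d\tilde{B}_{r}$ as a differential in $r$, the two sign changes (reversal of the orientation of the integration domain and the minus in the differential) combine to give $Z_{t-s}=Z_{t}-\int_{0}^{s} d\tilde B_{r}\circ Z_{t-r}$. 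Right-multiplying by $Z_{t}^{\top}$ yields $d\hat{Z}_{s}=-d\tilde{B}_{s}\circ \hat{Z}_{s}$ with $\hat{Z}_{0}=\text{Id}_{N}$. Since $-\tilde B$ has the same law as $B$, uniqueness in law for the Stratonovitch SDE \eqref{resolvent3} implies that $(\hat{Z}_{s})_{0\leq s\leq t}$ has the same law as $(Z_{s})_{0\leq s\leq t}$.

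It remains to pass from $\hat{Z}$ to $(Z_{t}Z_{t-s}^{\top})_{0\leq s\leq t}$. One checks directly that $(\hat{Z}_{s})^{\top}=Z_{t}Z_{t-s}^{\top}$, so the target process is the pathwise transpose of $\hat{Z}$. On $\SOR{N}$ the transpose coincides with the inverse, and the pathwise inverse of a right Brownian motion satisfies $dZ^{-1}=Z^{-1}\circ d(-B)$, that is, a left-invariant Stratonovitch SDE driven by a Brownian motion with the same law as $B$. On the compact group $\SOR{N}$ the bi-invariance of the Killing form forces the generators of the right- and left-invariant Brownian motions to coincide (both equal half the Laplace--Beltrami operator of the bi-invariant metric), so that these two processes, started from $\text{Id}_{N}$, have the same law; this is a standard fact cited from Franchi and Le Jan \cite{fran:leja:12}. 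Combining the two steps, $(Z_{t}Z_{t-s}^{\top})_{0\leq s\leq t}$ has the same law as $(Z_{s})_{0\leq s\leq t}$.

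The main delicate point is the Stratonovitch time-reversal: one must carefully track the two sign changes from the orientation reversal of the integration domain and from $dB_{t-r}=-d\tilde{B}_{r}$, and verify that they combine to yield the correct forward SDE with the correct sign in front of the driver. Working with the Stratonovitch form of \eqref{resolvent3} is essential here, since It\^o calculus does not transform cleanly under time reversal; a direct It\^o verification would require an additional computation of brackets and drift corrections and would be significantly heavier.
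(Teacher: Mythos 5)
Your proof is correct, and it supplies a genuine argument where the paper offers none: the authors simply refer to Franchi and Le Jan, Theorem VII.2.1 and Remark VII.2.6, for this lemma, so there is no internal proof to compare against. Your time-reversal plus left/right-invariance argument is, in all likelihood, essentially the content of the cited result.

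Two small technical points are worth being explicit about. First, the identity $Z_{t-s}=Z_{t}-\int_{0}^{s}d\tilde B_{r}\circ Z_{t-r}$ features, as an intermediate object, an integral whose integrand $Z_{t-r}$ is \emph{not} adapted to the forward filtration of $\tilde B$ (it is measurable with respect to $\sigma(B_{u}:u\leq t-r)$, which is the ``future'' of the reversed filtration). The equality is nonetheless valid because Stratonovitch integrals are defined by symmetric midpoint sums, which are invariant under the relabelling $v\mapsto t-r$; this should be stated rather than left implicit in ``ordinary change-of-variable rules.'' The adaptedness issue disappears after right-multiplying by $Z_{t}^{\top}$: indeed $\hat Z_{s}=Z_{t-s}Z_{t}^{\top}=\bigl(Z_{t}Z_{t-s}^{\top}\bigr)^{\top}$ is, via the flow property of \eqref{resolvent3}, a functional of the increments $(B_{u}-B_{t-s})_{t-s\leq u\leq t}$ only, hence is $\sigma(\tilde B_{r}:r\leq s)$-measurable. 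So the final SDE $d\hat Z_{s}=-d\tilde B_{s}\circ\hat Z_{s}$ is genuinely adapted and ``uniqueness in law'' applies. Second, you can shorten the conclusion: transposing $d\hat Z_{s}=-d\tilde B_{s}\circ\hat Z_{s}$ and using the antisymmetry of $\tilde B$ gives directly $d(\hat Z_{s}^{\top})=\hat Z_{s}^{\top}\circ d\tilde B_{s}$, a left-invariant SDE driven by a Brownian motion; then the equality of left and right Brownian motions on the compact group $\SOR{N}$ gives the claim at once, without first passing through the intermediate statement $\hat Z\overset{\text{law}}{=}Z$. Neither observation affects the validity of what you wrote.
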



\subsection{Proof of Proposition \ref{prop:24:7:1}} 
\label{SEC_COV}
Recall from \eqref{lan3} that the expectation is preserved, i.e.
\begin{equation}
\label{eq:22:8:10}
\E[X_{t}] = \E[X_{0}], \qquad \text{ for all }  t \geq 0.
\end{equation}
Since we also assumed that $\E[X_{0}]=0$, the process $(X_{t})_{t \geq 0}$ is centered. 
The point is then to compute
\begin{equation*}
\langle \Lambda_t\xi,\xi\rangle=\E \left[ \vert \xi \vert^2 \vert X_{t} \vert^2 
- \langle \xi, X_{t}  \rangle^2 \right],
\quad \xi \in \R^N, \ t \geq 0. 
\end{equation*}
Noting that ${\rm Trace} [a(v)] = (N-1) \vert v \vert^2$, for $v \in \R^N$, we get that
\begin{equation*}
\begin{split}
\frac{d}{dt} \E \left[ \vert X_{t} \vert^2 \right] 
&= \int_{0}^1 \E \left[{\rm Trace} \left[ a(X_{t}-Y_{t}(\alpha))\right] \right]d\alpha 
- 2(N-1) \int_{0}^1 \E \left[ \langle X_{t},X_{t}- Y_{t}(\alpha) \rangle \right] d\alpha 
\\
&= (N-1) 
\int_{0}^1 \E \left[ \vert X_{t}-Y_{t}(\alpha) \vert^2 \right] d\alpha 
- 2(N-1)  
\E \left[ \vert X_{t} \vert^2 \right]
= 0.
\end{split}
\end{equation*}
Therefore, the energy is preserved:
\begin{equation}
\label{eq:24:7:1} 
\E \left[ \vert X_{t} \vert^2 \right] = \E \left[ \vert X_{0} \vert^2 \right],
\qquad \text{ for all }  t \geq 0.
\end{equation}
Moreover, using the expression (\ref{lan}) of the Landau SDE (which implies that, 
just in the equation right below, $W$ becomes again an $N$-dimensional space-time white noise), we see that
\begin{equation*}
d \langle \xi,X_{t} \rangle = \int_{0}^1 \langle \sigma^{\top}(X_t-Y_t(\alpha))\xi, W(dt,d\alpha) \rangle - (N-1) 
 \int_{0}^1 \langle \xi, \left( X_{t} - Y_{t}(\alpha) \right) \rangle d \alpha dt.
\end{equation*}
Since $\vert \sigma^{\top}(y) \xi \vert^2 
= \vert \xi \vert^2 \vert y \vert^2 - \langle \xi,y \rangle^2$, we have
\begin{equation*}
\begin{split}
&\frac{d}{dt} \E \left[ \langle \xi,X_{t} \rangle^2 \right] \\
&= \int_{0}^1 \E \left[ \vert 
\sigma^{\top}\left(X_{t} - Y_{t}(\alpha)\right)\xi \vert^2 \right] d \alpha  
-  2(N-1)
\E  \int_{0}^1 \langle \xi, X_{t} \rangle 
\langle \xi, \left( X_{t} - Y_{t}(\alpha) \right) \rangle d \alpha 
\\
&= \vert \xi \vert^2 
 \int_{0}^1 \E \left[ \vert X_{t} - Y_{t}(\alpha) \vert^2 \right] d\alpha  
- \int_{0}^1 \E \left[ \langle \xi, X_{t} - Y_{t}(\alpha)  \rangle^2 \right] d\alpha  
-2(N-1)\E  \left[ \langle \xi, X_{t} \rangle^2 \right] 
\\
&= 2 \vert \xi \vert^2
\E \left[ \vert X_{t} \vert^2 \right] 
-  2N
\E  \left[ \langle \xi, X_{t} \rangle^2 \right].
\end{split}
\end{equation*}
From \eqref{eq:24:7:1}, we deduce that
\begin{equation*}
\frac{d}{dt} \E \left[ \langle \xi,X_{t} \rangle^2 \right]
= 2 \vert \xi \vert^2
\E \left[ \vert X_{0} \vert^2 \right]  
-  2N
\E  \left[ \langle \xi, X_{t} \rangle^2 \right],
\end{equation*}
so that, for any $t \geq 0$, 
\begin{equation*}
\E \left[  \langle \xi,X_{t} \rangle^2 \right]
= \exp(-2Nt) \biggl\{ \E \left[  \langle \xi,X_{0} \rangle^2 \right]
+  2 \int_{0}^t \exp(2Ns)  \vert \xi \vert^2
\E \left[ \vert X_{0} \vert^2 \right]  ds \biggr\}. 
\end{equation*}
Finally,
\begin{equation*}
\E \left[  \langle \xi,X_{t} \rangle^2 \right]
= \exp(-2Nt) \E \left[  \langle \xi,X_{0} \rangle^2 \right]
+ \frac{1}{N} \left[ 1 - \exp(-2Nt) \right] 
 \vert \xi \vert^2 
\E \left[ \vert X_{0} \vert^2 \right]. 
\end{equation*}
Therefore, 
\begin{equation}
\begin{split}
\langle \Lambda_t\xi,\xi\rangle&=\vert \xi \vert^2 \E \left[  \vert X_{t} \vert^2 \right]Ä
- \E \left[  \langle \xi,X_{t} \rangle^2 \right]\\
&=  \frac{1}{N} \left[ N- 1 + \exp(-2Nt) \right] \vert \xi \vert^2
\E \left[ \vert X_{0} \vert^2 \right]
- \exp(-2Nt) \E \left[  \langle \xi,X_{0} \rangle^2 \right].
\end{split}
\label{DEV_COV}
\end{equation}
Plugging the values of $\underline{\lambda}$ and $\overline{\lambda}$
in \eqref{DEV_COV}, we get the announced result.

\subsection{Proof of Corollary \ref{thm:24:12:1}}
By Theorem \ref{bds_EXPL_UE}, the result is straightforward when $\vert v \vert \leq 1$ (as the transition density has a Gaussian 
shape). When $\vert v \vert \geq 1$, the problem can be reformulated as follows. 
Given a constant $C>0$, the point is to estimate
\begin{equation}
\label{eq:qt:1}
\begin{split}
q_{t}(v) := \int_{\R^{N}} &\frac{[ \delta_{t}(\vert x_{0} \vert)]^{N-1}}{ t^{N/2}}
\\
&\hspace{5pt} \times f_{0}(x_{0})
\exp\left(-\frac{C}{t} \left\{\big\vert \vert v \vert- \vert x_0 \vert \big\vert^2
+(1 \wedge \vert x_0 \vert)^2\big\vert \frac{v}{|v|}-\frac{x_0}{\vert x_0\vert} \big\vert^2\right\}\right)
dx_{0},
\end{split}
\end{equation}
where we have let 
$\displaystyle \delta_{t}(\vert x_{0} \vert)  :=  \frac{1\wedge 
 \Bigl( \frac{\displaystyle t^{1/2}}{\displaystyle 1\vee (|x_0|\wedge |v|)} \Bigr)}{1\wedge t^{1/2}}$. 
\vspace{5pt}

By a polar change of variable, we get
\begin{equation}
\label{eq:qt:2}
\begin{split}
q_{t}(v) =\frac1{  t^{N/2}} \int_{0}^{+\infty} &d\rho \, \rho^{N-1}
\bigl[ \delta_{t}(\rho) \bigr]^{N-1}  f_{0}(\rho)\exp\left(-\frac{C}{t}\big\vert \rho - \vert v \vert \big\vert^2\right)
\\
&\hspace{15pt} \times \int_{\Sph^{N-1}}
\exp\left( -\frac{C}t(1 \wedge \rho)^2\big\vert s -\frac{v}{\vert v\vert} \big\vert^2\right)
 d\nu_{\Sph^{N-1}}(s),
\end{split}
\end{equation}
where $\nu_{\Sph^{N-1}}$ denotes the Lebesgue measure on the sphere $\Sph^{N-1}$ of dimension $N-1$.

As we shall make use of its renormalized version below, we normalize $\nu_{\Sph^{N-1}}$, so that 
$\nu_{\Sph^{N-1}}$ reads as a probability measure. Up to a multiplicative constant, the above expression remains
unchanged. In particular, as we are just interested in lower and upper bounds of $q_{t}(v)$, we can 
keep the above as a definition for $q_{t}(v)$, with $\nu_{\Sph^{N-1}}$ being normalized.

Let us now recall the following two-sided heat kernel estimate on ${\mathbb S}^{N-1} $, see e.g. \cite{stro:06}. There exists $C':=C'(N)\ge 1$ such that, for all $t>0$,
\begin{equation}
\label{eq:hkes:sphere}
(C')^{-1}
\leq \frac{1}{\Bigl(1\wedge \displaystyle \frac{t^{1/2}}{1\wedge \rho} \Bigr)^{N-1}}
\int_{\Sph^{N-1}} 
\exp\left(-\frac{C (1 \wedge \rho)^2}{t} \big\vert s -\frac{v}{\vert v\vert} \big\vert^2\right)
d\nu_{\Sph^{N-1}}(s) 
\leq 
C'.
\end{equation}
Therefore, what really counts in the expression of $q_{t}(v)$ is the product
\begin{equation}
\label{eq:qt:4}
 \Bigl( 1\wedge \displaystyle \frac{t^{1/2}}{1\wedge \rho} \Bigr) \delta_{t}(\rho)
= \left\{
\begin{array}{ll}
1\wedge \displaystyle \frac{t^{1/2}}{\rho} \quad
&\text{if} \ \rho \leq 1,
\vspace{4pt}
\\
1\wedge \displaystyle \frac{t^{1/2}}{\rho} \quad
&\text{if} \ 1 \leq \rho \leq \vert v \vert,
\vspace{4pt}
\\
1\wedge \displaystyle \frac{t^{1/2}}{\vert v \vert} &\text{if} \ \rho > \vert v\vert,
\end{array}
\right.
\end{equation}
Up to a redefinition of the function $q_{t}$, it is thus sufficient to consider 
\begin{equation}
\label{eq:qt:3}
\begin{split}
q_{t}(v) &:= \frac{1}{t^{N/2}} \int_{0}^{|v|}    
 f_{0}(\rho)
\exp\left(-\frac{C}{t} \big\vert \rho - \vert v \vert \big\vert^2
\right)\left\{ 1\wedge \frac{t^{1/2}}{\rho}\right\}^{N-1}
\rho^{N-1}d\rho
\\
&\hspace{15pt} +\frac{1}{t^{N/2}} \int_{|v|}^{+\infty}  
 f_{0}(\rho)
\exp\left(-\frac{C}{t} \big\vert \rho - \vert v \vert \big\vert^2
\right)\left\{ 1\wedge \frac{t^{1/2}}{|v|}\right\}^{N-1}
\rho^{N-1}
d\rho.
\end{split}
\end{equation}

Compare now with what happens when the convolution in \eqref{eq:qt:1} is made with respect to the Gaussian kernel. 
Basically $\delta_{t}(\vert x_{0}\vert)$ is replaced by $1$ and 
$1 \wedge \vert x_{0} \vert$ is replaced by $\vert v \vert \wedge \vert x_{0} \vert$ (see 
Remark \ref{proj2}). Equivalently,
$\delta_{t}(\rho)$ is replaced by $1$ and 
$1 \wedge \rho$ by $\vert v \vert \wedge \rho$ in 
\eqref{eq:qt:2}. 
This says that, in \eqref{eq:hkes:sphere}, $1 \wedge \rho$ is replaced 
by $\vert v \vert \wedge \rho$. Then, in \eqref{eq:qt:4}, 
$\delta_{t}(\rho)$ is replaced by $1$ and 
$1 \wedge \rho$ by $\vert v \vert \wedge \rho$, which leads exactly to the same three equalities. This shows that, in the Gaussian regime, the right quantity to consider is also \eqref{eq:qt:3}. 
\section{Proof of the Density Estimates in the Non-Degenerate case}
\label{SEC_NON_DEG}
\subsection{Preliminary results for the  Haar measure and the heat kernel on $\SOR{N}$}
Starting from the representation Theorem \ref{bds}, we want to exploit the \textit{Aronson like} heat kernel estimates 
for the special orthogonal group. Precisely, from 
VIII.2.9 in Varopoulos \textit{et al} \cite{varo:salo:coul:92}, we derive that, for $t>0$, 
the law of  $Z_t$ has a density, denoted by $p_{\SO{N}}(t,\textrm{Id}_{N},\cdot)$, with respect to the probability Haar measure 
$\mu_{\SO{N}}$ of $\SOR{N}$.  
Moreover, there exists a constant $\beta>1$ such that, 
for any $g \in \SOR{N}$ and for all $t>0$:
\begin{equation}
\label{STROOCK_SALOFF}
\begin{split}
&\frac{1}{\beta (1\wedge t)^{N(N-1)/4}}\exp\biggl(-\beta \frac{d_{\SO{N}}^2(\textrm{Id}_{N},g)}{t}\biggr)
\\
&\hspace{15pt} \le p_{\SO{N}}(t,\textrm{Id}_{N},g)\le \frac{\beta}{(1\wedge  t)^{N(N-1)/4}}\exp\biggl(-\frac{d^2_{\SO{N}}(\textrm{Id}_{N},g)}{\beta t}\biggr),
\end{split}
\end{equation}
where $d_{\SO{N}}(\textrm{Id}_{N},g)$ denotes the Carnot distance between $\textrm{Id}_{N}$ and $g$:
\begin{equation*}
d_{\SO{N}}(\textrm{Id}_{N},g) = \inf_{H \in {\mathcal A}_{N}(\R) : e^H=g} \| H \|,
  \end{equation*} 
$\| \cdot \|$ standing for the usual matricial norm on ${\mathcal M}_{N}(\R)$.
Proof of the diagonal rate in \eqref{STROOCK_SALOFF} relies on the 
following volume estimate from Theorem V.4.1 in \cite{varo:salo:coul:92}: 
By compactness of  $\SO{N}(\R)$,
there exists $C_{N} \geq 1$ such that, for all $t>0$,
\begin{equation}
\label{CTR_VOL}
C_{N}^{-1} (1 \wedge t)^{N(N-1)/4} \leq 
\mu_{\SO{N}}\bigl(B_{\SO{N}}(t^{1/2}) \bigr) \leq C_{N} (1 \wedge t)^{N(N-1)/4}, 
\end{equation}
where $B_{\SO{N}}(\rho):=\{ g\in \SOR{N}: d_{\SO{N}}(\textrm{Id}_{N},g)\le \rho \}$, for $\rho >0$, denotes 
the ball of radius $\rho$ and center $\textrm{Id}_{N}$. \color{black}

By local inversion of the exponential, it is well-checked that the Carnot distance is continuous with respect to the standard matricial 
norm on ${\mathcal M}_{N}(\R)$. In particular, by compactness of ${\mathrm{SO}}_{N}(\R)$, it is bounded on the whole group. Actually, we claim:
\begin{lemma}[Equivalence between Carnot distance and matrix norm on the group]
\label{lem:carnot}
There exists a constant $C:=C(N) >1$ such that, for any $g \in {\mathrm{SO}}_{N}(\R)$, 
\begin{equation*}
C^{-1} \| \textrm{\rm Id}_{N} - g \| \leq d_{\SO{N}}(\textrm{\rm Id}_{N},g) \leq C \| \textrm{\rm Id}_{N} - g \|.
\end{equation*}
\end{lemma}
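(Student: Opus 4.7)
My plan is to prove the two inequalities separately, each by combining a local equivalence statement near the identity with a compactness argument that extends the bounds globally on $\SOR{N}$.

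For the \emph{local equivalence}, I would start from the Taylor expansion of the matrix exponential: for $H \in \mathcal{A}_{N}(\R)$,
\begin{equation*}
e^{H} - \textrm{Id}_{N} = H + \sum_{k \geq 2} \frac{H^{k}}{k!},
\qquad
\bigl\| e^{H} - \textrm{Id}_{N} - H \bigr\| \leq \|H\|^2 e^{\|H\|}.
\end{equation*}
Consequently, there exists $r_{0} > 0$ so that for $\|H\| \leq r_{0}$ one has $\tfrac12 \|H\| \leq \|e^{H} - \textrm{Id}_{N}\| \leq 2 \|H\|$. Since $\exp$ restricted to the ball $\{\|H\| < r_{0}\}$ of $\mathcal{A}_{N}(\R)$ is a diffeomorphism onto an open neighborhood $\mathcal{U}$ of $\textrm{Id}_{N}$ in $\SOR{N}$, this already gives both inequalities on $\mathcal{U}$: the upper bound by taking $H = \log g$ as a competitor in the infimum defining $d_{\SO{N}}$, and the lower bound because every $H$ with $e^{H}=g$ and $\|H\| \leq r_{0}$ satisfies $\|\textrm{Id}_{N}-g\| \leq 2 \|H\|$, so taking the infimum yields the desired control.

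For the \emph{global extension}, I use that $\SOR{N}$ is compact, that $\exp$ is surjective (a standard fact for compact connected Lie groups), and that $d_{\SO{N}}$ is bounded on $\SOR{N}$ by some diameter $D < \infty$ (the property invoked just above the statement). Outside $\mathcal{U}$, the continuous function $g \mapsto \|\textrm{Id}_{N}-g\|$ is bounded below by some $\varepsilon >0$, so
\begin{equation*}
d_{\SO{N}}(\textrm{Id}_{N},g) \leq D \leq \frac{D}{\varepsilon} \|\textrm{Id}_{N}-g\|,\qquad g \in \SOR{N}\setminus \mathcal{U},
\end{equation*}
which closes the upper bound. For the lower bound outside $\mathcal{U}$, I observe that, in the infimum defining $d_{\SO{N}}(\textrm{Id}_{N},g)$, it suffices to consider $H$ with $\|H\| \leq D+1$, and the series bound $\|e^{H}-\textrm{Id}_{N}\| \leq \|H\| e^{\|H\|}$ then gives $\|\textrm{Id}_{N}-g\| \leq e^{D+1} \|H\|$ for every such $H$; passing to the infimum yields $\|\textrm{Id}_{N}-g\| \leq e^{D+1} d_{\SO{N}}(\textrm{Id}_{N},g)$. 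Combining the local and global estimates and taking the larger of the two constants gives a single $C=C(N) > 1$ that works on all of $\SOR{N}$.

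The only mildly delicate point, and the main thing to state cleanly, is the finiteness of the Carnot diameter of $\SOR{N}$: this rests on surjectivity of $\exp: \mathcal{A}_{N}(\R) \to \SOR{N}$ together with the continuity of $d_{\SO{N}}$ with respect to the matrix norm (noted in the paper just above the lemma) and the compactness of $\SOR{N}$. Once this is in hand, the rest is routine Taylor estimation of the matrix exponential.
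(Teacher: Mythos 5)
Your proof is correct and follows essentially the same strategy as the paper's: local control via the Taylor expansion of $\exp$ (which is just a quantitative form of the paper's appeal to the local Lipschitz property of $\exp$ and $\log$), combined with compactness of $\SOR{N}$ and the finiteness of the Carnot diameter to extend both inequalities globally. Your version of the lower bound—bounding the competitor $H$ by $D+1$ before applying the series estimate—is a slightly cleaner way of handling the infimum globally than the paper's WLOG reduction, but the underlying idea is the same.
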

\begin{proof}
We first prove the upper bound. Considering a given $g \in {\mathrm{SO}}_{N}(\R)$, we can assume without any loss of generality that $\| \textrm{Id}_{N}- g \| \leq \varepsilon$, for some arbitrarily prescribed $\varepsilon >0$. Indeed, if $\|\textrm{Id}_{N}-g \| > \varepsilon$, the upper bound directly follows from the boundedness of the Carnot distance on the group. 

Choosing $\varepsilon$ small enough, we can assume that the logarithm mapping on ${\mathcal M}_{N}(\R)$ realizes a diffeomorphism from the 
ball of center $\textrm{Id}_{N}$ and radius $\varepsilon >0$ into some open subset around the null matrix. Then, letting 
$H := \ln(g)$, we deduce from the variational definition of the distance that 
$d_{\SO{N}}(\textrm{Id}_{N},g) \leq  \| H \|$. Writing $H  = \ln(\textrm{Id}_{N} + g-\textrm{Id}_{N})$, we obtain that $\| H \| \leq C \| g - \textrm{Id}_{N} \|$ for some $C:=C(N)$, which proves that  
$d_{\SO{N}}(\textrm{Id}_{N},g) \leq  C \| g - \textrm{Id}_{N} \|$. 

The converse is proved in a similar way. Without any loss of generality, we can assume that $d_{\SO{N}}(\textrm{Id}_{N},g) \leq \varepsilon$, for some 
given $\varepsilon>0$. By the variational definition of the distance, this says that there exists a matrix $H \in {\mathcal A}_{N}(\R)$ such that $\exp(H) = g$ and $d_{\SO{N}}(\textrm{Id}_{N},g) \geq \|H \|/2$, with $\|H\| 
\leq 2\varepsilon$. By the 
Lipschitz property of the exponential around $0$, 
$\| g - \textrm{Id}_{N} \| \leq C \| H \|$ (for a possibly new value of the constant $C$), which yields  
$\| g - \textrm{Id}_{N} \| \leq 2 Cd_{\SO{N}}(\textrm{Id}_{N},g)$. 
\end{proof}

Part of our analysis relies on a specific parametrization of $\SOR{N}$ by elements of 
$\Sph^{N-1} \times \SOR{N-1}$, where $\Sph^{N-1}$ is the sphere of dimension $N-1$. Namely, 
for an element $h \in \SOR{N-1}$, we denote by $L_h$ the element of $\SOR{N}$:
\begin{equation*}
 L_{h}:=\left( \begin{array}{c|ccc} 
1 & 0 & \cdots & 0\\
\hline
0 &  & & \\
\vdots &  & h& 
\\
0 &  & &
\end{array}
\right).
\end{equation*} 
Moreover, for an element $s \in \Sph^{N-1}$, we denote by $V_{s}$ an element of $\SOR{N}$
such that $V_{s} e_{1} = s$. It is constructed in the following way. When $\langle s,e_{1} \rangle
\not =0$, the family  $(s,e_{2},\dots,e_{N})$ is free. We can orthonormalize it by means 
of the Gramm-Schmidt procedure. By induction, we let
\begin{equation}
\label{eq:GS}
\begin{split}
u_{1}:=s, \quad u_i :=e_i - \sum_{k=1}^{i-1}\langle e_i,u_k\rangle \frac{u_k}{\vert u_{k}\vert^2},\quad i\in\{ 2,\cdots,N\},
\end{split}
\end{equation}
and then $s_i:=u_i/|u_i|$, for all $i \in \{1,\dots,N\}$, so that $s_{1}=s$. 
Then, the family $(s_{1},s_{2},\dots,s_{N})$ is an orthonormal basis and $V_{s}$ is given by 
the passage matrix expressing the $(s_{i})_{1 \leq i \leq N}$' in the basis $(e_{i})_{1 \leq i \leq N}$. 
When $\langle s,e_{1} \rangle =0$, we consider $\langle s,e_{2} \rangle$. If $\langle s,e_{2} \rangle \not =0$, then 
the family $(s,e_{3},\dots,e_{N},e_{1})$ is free and we can apply the Gramm-Schmidt procedure. 
If $\langle s,e_{2} \rangle =0$, 
we then go on until we find some index $k \in \{3,\dots,N\}$
such that $\langle s,e_{k} \rangle \not = 0$. Such a construction ensures that 
the mapping $\Sph^{N-1} \ni s \mapsto V_{s} \in \SOR{N}$ is measurable. 

With $s \mapsto V_{s}$ and $h \mapsto L_{h}$ at hand, we claim that the mapping 
$\phi : (s,h) \mapsto V_{s} L_{h}$ is bijective from $\Sph^{N-1} \times \SOR{N-1}$ onto
$\SOR{N}$. Given some $g \in \SOR{N}$, $g=V_{s}L_{h}$ if and only if 
$s=ge_{1}$ and $L_{h}=V_{ge_{1}}^\top g$. By construction of $V_s$ and orthogonality of $V_{ge_{1}}^\top g$,
we indeed check that $(V_{ge_{1}}^\top g)_{1,1}=1$ and $(V_{ge_{1}}^\top g)_{i,1}=(V_{ge_{1}}^\top g)_{1,i}=0$ for $i=2,\dots,N$.
In other words, 
$V_{ge_{1}}^\top g$ always fits some $L_{h}$, the value of $h$ being uniquely determined by 
the lower block $(V_{ge_{1}}^\top g)_{2 \leq i,j \leq N}$, which proves the bijective property of $\phi$. 
Denoting by $\Pi_{N-1}$ the projection mapping:
\begin{equation*}
\pi_{N-1} :  
{\mathcal M}_{N}(\R)  \ni (a_{i,j})_{1 \leq i,j \leq N} \mapsto 
(a_{i,j})_{2 \leq i,j \leq N},
\end{equation*}
we deduce that the converse of $\phi$ writes
$\phi^{-1} : \SOR{N} \ni g \mapsto (ge_{1},\pi_{N-1}(V_{ge_{1}}^\top g)) \in \Sph^{N-1}
\times \SOR{N-1}$.

The mapping $\phi$ allows us to disintegrate the Haar measure on $\SOR{N}$ in terms of the product of 
the Lebesgue probability measure $\nu_{\Sph^{N-1}}$ on the sphere $\Sph^{N-1}$ and the Haar probability measure on 
$\SOR{N-1}$. We have the following result, see e.g. Proposition III.3.2 in \cite{fran:leja:12} for a proof:
\begin{lemma}[Representation of the Haar measure on $\SOR{N} $]
Let $f$ be a bounded Borel function from $\SOR{N}$ to $\R$. Then
(with $\nu_{\Sph^{N-1}}$ the normalized Lebesgue measure on $\Sph^{N-1}$),
\begin{eqnarray}
\label{DESINT_HAAR}
\int_{\SOR{N}}f(g)d\mu_{\SO{N}}(g):=\int_{\Sph^{N-1}\times \SOR{N-1}} f(V_s L_{h}) d\nu_{\Sph^{N-1}}(s)
d\mu_{\SO{N-1}}(h).
\end{eqnarray}
\end{lemma}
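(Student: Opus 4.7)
The plan is to identify the pushforward measure $\nu := \phi_{*}(\nu_{\Sph^{N-1}} \otimes \mu_{\SO{N-1}})$ on $\SOR{N}$ with the Haar probability $\mu_{\SO{N}}$. Since the group $\SOR{N}$ is compact, its normalized Haar measure is the unique probability measure on $\SOR{N}$ that is invariant under (say) left translation. Both $\nu_{\Sph^{N-1}}$ and $\mu_{\SO{N-1}}$ are probabilities, so $\nu$ is automatically a probability. Thus everything reduces to proving that $\nu$ is left-invariant: for every bounded Borel $f : \SOR{N} \to \R$ and every $g_{0} \in \SOR{N}$,
\begin{equation*}
\int_{\Sph^{N-1} \times \SOR{N-1}} f(g_{0} V_{s} L_{h}) \, d\nu_{\Sph^{N-1}}(s) \, d\mu_{\SO{N-1}}(h) = \int_{\Sph^{N-1} \times \SOR{N-1}} f(V_{s} L_{h}) \, d\nu_{\Sph^{N-1}}(s) \, d\mu_{\SO{N-1}}(h).
\end{equation*}

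The key algebraic step is the following. For every fixed $s \in \Sph^{N-1}$, the matrix $V_{g_{0} s}^{\top} g_{0} V_{s}$ is an element of $\SOR{N}$ satisfying $V_{g_{0} s}^{\top} g_{0} V_{s} e_{1} = V_{g_{0} s}^{\top} g_{0} s = V_{g_{0} s}^{\top} (g_{0} s) = e_{1}$. Combined with orthogonality, exactly as in the construction of $\phi$ above, this forces $V_{g_{0} s}^{\top} g_{0} V_{s}$ to be of the form $L_{k(s)}$ for some $k(s) \in \SOR{N-1}$; moreover $s \mapsto k(s)$ is Borel, as the composition of Borel maps. Consequently
\begin{equation*}
g_{0} V_{s} L_{h} = V_{g_{0} s} L_{k(s)} L_{h} = V_{g_{0} s} L_{k(s) h}.
\end{equation*}

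With this identity in hand, I would apply Fubini and then two successive invariance arguments. First, for each fixed $s$, the right-invariance of the Haar measure $\mu_{\SO{N-1}}$ gives
\begin{equation*}
\int_{\SOR{N-1}} f(V_{g_{0} s} L_{k(s) h}) \, d\mu_{\SO{N-1}}(h) = \int_{\SOR{N-1}} f(V_{g_{0} s} L_{h}) \, d\mu_{\SO{N-1}}(h).
\end{equation*}
Second, integrating the resulting expression in $s$ and using the $\SOR{N}$-invariance of the normalized Lebesgue measure $\nu_{\Sph^{N-1}}$ (with the change of variable $s \mapsto g_{0}^{\top} s$) yields the desired identity. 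By uniqueness of the Haar probability on $\SOR{N}$, this gives $\nu = \mu_{\SO{N}}$ and hence \eqref{DESINT_HAAR}.

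The only delicate point in this outline is ensuring that $s \mapsto V_{s}$ (and hence $s \mapsto k(s)$) is genuinely Borel, so that Fubini can be invoked; this has already been taken care of in the construction of $V_{s}$ via a piecewise Gram--Schmidt procedure preceding the statement. The rest is formal manipulation of the two invariance properties. The main potential obstacle is purely notational, namely keeping track of the two independent invariance arguments (on the sphere and on the smaller rotation group) while making sure the intermediate measurable map $k(s)$ does not interfere with either integration.
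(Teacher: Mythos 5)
Your argument is correct and is the standard uniqueness-of-Haar-measure proof of this disintegration; the paper itself gives no proof, simply citing Proposition III.3.2 of Franchi and Le Jan \cite{fran:leja:12}, so there is nothing internal to compare against. The only tiny slip is terminological: the substitution $h \mapsto k(s)h$ is a \emph{left} translation, so the relevant property of $\mu_{\SO{N-1}}$ is left-invariance rather than right-invariance — but since $\SOR{N-1}$ is compact, hence unimodular, the two coincide and nothing in the argument is affected. Also note that the Borel-measurability of $s \mapsto k(s)$, while true (and good to observe), is not actually needed: after Fubini one applies translation-invariance of $\mu_{\SO{N-1}}$ pointwise in $s$, and the measurability of the resulting inner integral $\Phi(g_0 s) = \int_{\SOR{N-1}} f(V_{g_0 s} L_h)\, d\mu_{\SO{N-1}}(h)$ follows directly from the Borel-measurability of $s \mapsto V_s$ already established in the paper.
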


\subsection{Proof of Theorem \ref{bds_EXPL_UE}}
From \eqref{STROOCK_SALOFF} and  Theorem \ref{bds}, we derive the following two-sided bound for the conditional density. There exists $\tilde C:=\tilde C(N)\ge 1$ such that, for all $t>0 $,
\begin{align}
&\frac{1}{\tilde C\beta [t^{N/2}(1\wedge t)^{N(N-1)/4}]}\int_{\SOR{N}}\exp \Bigl( - \Bigl\{ \beta \frac{d^2_{\SO{N}}(\textrm{Id}_{N},g)}{t}+\tilde C\frac{ \vert v - g x_{0} \vert^2}t\Bigr \}\Bigr) d \mu_{\SO{N}}(g)
\nonumber
\\
&\hspace{10pt}\le 
f_{x_{0}}(t,v)
\label{bds_SG_SON}
\\
&\hspace{20pt}
\le \frac{\tilde C\beta}{t^{N/2}(1\wedge t)^{N(N-1)/4}}\int_{\SO{N}(\R)}\exp \Bigl( -  \Bigl\{\frac{d^2_{\SO{N}}(\textrm{Id}_{N},g)}{\beta t} 
+\frac{ \vert v - g x_{0} \vert^2}{\tilde Ct}\Bigr\} \Bigr) d \mu_{\SO{N}}(g),
\nonumber
\end{align}
which will be the starting point to derive the bounds of Theorem \ref{bds_EXPL_UE}.
\color{black}

\subsubsection{Gaussian Regime}
Let us first concentrate on the bounds when $|x_0|\wedge |v|\le 1$. 
Without loss of generality, 
we can assume by symmetry that $|x_0|\le 1$. 
Indeed, for all $g\in \SOR{N}$, $|v-gx_0|=|g^\top v-x_0|$
and $d_{\SO{N}}(\textrm{Id}_{N},g)
=d_{\SO{N}}(\textrm{Id}_{N},g^\top)$. Moreover, the Haar measure
 is invariant by transposition. This can be checked as follows.
If $Z$ is distributed according to the Haar measure, then, for any rotation $\rho$, $\rho Z^{\top} = (Z \rho^{\top})^{\top}$. Since 
$Z \rho^{\top}$ has the same law as $Z$ (as the group is compact, it is known the Haar measure is invariant both by left and right multiplications), we deduce that the law of $Z^{\top}$ is invariant by rotation. 

Now, write:
\begin{equation*}
\frac 12\frac{|x_0-v|^2}{t}-\frac{\|\textrm{Id}_{N}-g\|^2|x_0|^2}{t}  \le \frac{|v-gx_0|^2}t\le 2 
\biggl(\frac{\|\textrm{Id}_{N}-g\|^2|x_0|^2}t+\frac{|x_0-v|^2}t\biggr).
\end{equation*}
From \eqref{bds_SG_SON} and the assumption $\vert x_{0}\vert \leq 1$, we get that: 
\begin{equation*}
\begin{split}
f_{x_{0}}(t,v)& \ge \frac{(\tilde C\beta)^{-1}}{t^{N/2}}\exp	\bigl(-2\tilde C\frac{|x_0-v|^2}{t} \bigr)\\%
&\times\left\{ \frac{1}{(1\wedge t)^{N(N-1)/4} }\int_{\SOR{N}}\exp\Bigl(-\beta\frac{d^2_{\SO{N}}({\rm Id}_N,g) }{t}- 2\tilde C\frac{\|{\rm Id}_N-g\|^2}{t} \Bigr)d\mu_{\SO{N}}(g) \right\}\\
&\hspace{-16pt} \overset{\rm Lemma \ \ref{lem:carnot}}{\ge }
 \frac{\tilde C^{-1}}{t^{N/2}}\exp \bigl(-2\tilde C\frac{|x_0-v|^2}{t} \bigr)\\%
&\hspace{15pt}\times\left\{ \frac{1}{(1\wedge t)^{N(N-1)/4} }\int_{\SOR{N}}
\exp \Bigl(-\tilde C\frac{d^2_{\SO{N}}({\rm Id}_N,g) }{t} \Bigr)d\mu_{\SO{N}}(g) \right\}
\\
&\hspace{-5pt} \overset{\eqref{STROOCK_SALOFF}}{\ge} \frac{\tilde C^{-1}}{t^{N/2}}\exp \bigl(-2\tilde 
C\frac{|x_0-v|^2}{t} \bigr),
\end{split}
\end{equation*}
the constant $\tilde C$ being allowed to increase from line to line. 
On the other hand,
using once again Lemma \ref{lem:carnot} and \eqref{STROOCK_SALOFF} and
 choosing $\tilde C$ large enough:
\begin{equation*}
\begin{split}
f_{x_{0}}(t,v)& \le \frac{\tilde C\beta}{t^{N/2}}\exp\biggl(-\frac{\tilde C^{-1}}2\frac{|x_0-v|^2}{t}\biggr)\\
&\hspace{15pt} \times\left\{ \frac{1}{(1\wedge t)^{N(N-1)/4} }\int_{\SOR{N}}\exp
\Bigl(-\frac{d^2_{\SO{N}}({\rm Id}_N,g) }{\beta t}+\frac{\|{\rm Id}_N-g\|^2}{\tilde Ct} \Bigr)d\mu_{\SO{N}}(g) \right\}\\
&\le \frac{\tilde C}{t^{N/2}}\exp\biggl(- \frac{|x_0-v|^2}{\tilde C t}\biggr),
\end{split}
\end{equation*}
where we have chosen $\tilde C$ such that, for all $g\in \SOR{N}$,
\color{black}
\begin{equation*}
\exp\biggl(\biggl\{-\frac{d_{\SO{N}}^2(\textrm{Id}_{N},g)}{\beta t}+\frac{\|\textrm{Id}_{N}-g\|^2}{\tilde C t} \biggr\}\biggr)\le \exp\biggl(-\frac{d_{\SO{N}}^2(\textrm{Id}_{N},g)}{2\beta t}  \biggr).
\end{equation*}

\subsubsection{Non Gaussian Regime}
We now look at the case 
$|x_0|\wedge |v|> 1$.
Starting from \eqref{bds_SG_SON} and Lemma \ref{lem:carnot},
 we aim at giving, for given $c>0$, upper and lower bounds, homogeneous 
 to those of \eqref{LOSS_MASS_UE}, for the quantity $t^{-N/2}p_{x_{0}}(t,v)$, where :
\begin{equation}
\label{eq:8:12:1}
p_{x_{0}}(t,v) := (1\wedge t)^{-N(N-1)/4} \int_{{\mathrm{SO}}_{N}(\R)} \exp \Bigl( - c 
\frac{\| \textrm{Id}_{N}- g \|^2 + \vert v - gx_{0} \vert^2}t \Bigr) d \mu_{\SO{N}}(g).
\end{equation}
Above, we notice that $\vert v - gx_{0} \vert^2 = \vert g^{\top} v - x_{0} \vert^2$ and 
$\| \textrm{Id}_{N}- g \|^2 = \| \textrm{Id}_{N}- g^{\top} \|^2$. Since 
the Haar measure is invariant by transposition,
the roles of $v$ and $x_{0}$ can be exchanged in formula \eqref{eq:8:12:1} 
and we can assume that $\vert v \vert \geq \vert x_{0}\vert$.\\ 

By Lemma \ref{proj} (with $x_{0}$ replaced by $g x_{0}$), we know that  
\begin{equation*}
\bigl\vert  \vert v \vert - \vert x_{0} \vert \bigr\vert^2+ 
\bigl\vert \frac{\vert x_0\vert}{\vert v \vert}
v - g x_{0} 	\bigr\vert^2 \leq \vert v - g x_{0} \vert^2 \leq
 2\Bigl( \bigl\vert  \vert v \vert - \vert x_{0} \vert \bigr\vert^2 + 
 \bigl\vert \frac{\vert x_{0}\vert}{\vert v \vert}
v - g x_{0} 	\bigr\vert^2 \Bigr). 
\end{equation*}

\textit{Radial cost.}
 The term $\vert  \, \vert v \vert - \vert x_{0} \vert \, \vert^2$ is referred to as the radial cost. Since it is independent of
$g$, we can focus on the other one, called the tangential cost. 
Then, changing $v$ into 
$(\vert x_{0} \vert/ \vert v \vert) v$, 
we can assume that $\vert v \vert = \vert x_{0} \vert$.

\textit{Tangential cost.}
We now assume that $\vert v \vert = \vert x_{0} \vert$. By rotation, we can assume that $x_{0}= \vert x_{0} \vert e_{1}$. Then, we can write
$v= \vert x_{0} \vert h e_{1}$ for some $h \in \textrm{SO}_{N}(\R)$. We then expand in \eqref{eq:8:12:1}
\begin{equation*}
\| \textrm{Id}_{N} - g \|^2 + \vert x_0 \vert^2\vert h e_{1} - ge_{1} \vert^2 = \vert e_{1} - g e_{1} \vert^2 + 
\vert x_{0} \vert^2 \vert h e_{1} - ge_{1} \vert^2 + \sum_{i=2}^N \vert e_{i} - g e_{i} \vert^2.
\end{equation*}
The strategy is then quite standard and consists in reducing the quadratic form 
$\vert e_{1} - g e_{1} \vert^2 + 
\vert x_{0} \vert^2 \vert h e_{1} - ge_{1} \vert^2$. We write
\begin{equation*}
\begin{split}
&\vert e_{1} - g e_{1} \vert^2 + 
\vert x_{0} \vert^2 \vert h e_{1} - ge_{1} \vert^2
\\
&= \bigl( 1+ \vert x_{0} \vert^2 \bigr) \vert g e_{1} \vert^2
- 2 \langle ge_{1}, e_{1} + \vert x_{0} \vert^2 h e_{1} \rangle + 1+ \vert x_{0} \vert^2
\\
&= \bigl( 1+ \vert x_{0} \vert^2 \bigr) \Bigl\vert g e_{1} - \frac{ e_{1} 
+ \vert x_{0} \vert^2 h e_{1}}{1+ \vert x_{0} \vert^2}
\Bigr\vert^2 - \frac{1}{1+ \vert x_{0} \vert^2} \bigl\vert e_{1} + \vert x_{0} \vert^2 h e_{1} \bigr\vert^2
+ 1+ \vert x_{0} \vert^2. 
\end{split}
\end{equation*}
Since,
\begin{equation*}
\begin{split}
\frac{1}{1+ \vert x_{0} \vert^2} \bigl\vert e_{1} + \vert x_{0} \vert^2 h e_{1} \bigr\vert^2
- \bigl( 1+ \vert x_{0} \vert^2 \bigr)
&=\frac{1}{1+ \vert x_{0} \vert^2}
\Bigl( \bigl\vert e_{1} + \vert x_{0} \vert^2 h e_{1} \bigr\vert^2 -  
\bigl( 1+ \vert x_{0} \vert^2 \bigr)^2 \Bigr)
\\
&=- \frac{2 \vert x_{0} \vert^2}{1+ \vert x_{0} \vert^2} \bigl( 1 -   \langle e_{1}, h e_{1} \rangle 
\bigr),
\end{split}
\end{equation*}
we finally get that 
\begin{equation*}
\begin{split}
&\vert e_{1} - g e_{1} \vert^2 + 
\vert x_{0} \vert^2 \vert h e_{1} - ge_{1} \vert^2
\\
&= \bigl( 1+ \vert x_{0} \vert^2 \bigr) \Bigl\vert g e_{1} - \frac{ e_{1} 
+ \vert x_{0} \vert^2 h e_{1}}{1+ \vert x_{0} \vert^2}
\Bigr\vert^2 + \frac{2 \vert x_{0} \vert^2}{1+ \vert x_{0} \vert^2} \bigl( 1 -   \langle e_{1}, h e_{1} \rangle 
\bigr). 
\end{split}
\end{equation*}
As the second term is independent of $g$, we write
\begin{equation}
\label{CTR_PX0}
\begin{split}
&p_{x_0}(t,v) =(1\wedge  t)^{-N(N-1)/4} \exp \biggl( - \frac{c}{t}
\frac{2 \vert x_{0} \vert^2}{1+ \vert x_{0} \vert^2} \bigl( 1 -   \langle e_{1}, h e_{1} \rangle 
\bigr) \biggr)
\\
&\hspace{15pt} \times
\int_{{\mathrm{SO}}_{N}(\R)} \exp \biggl(  - \frac{c}{t}
\bigl( 1+ \vert x_{0} \vert^2 \bigr) \Bigl\vert g e_{1} - \frac{ e_{1} 
+ \vert x_{0} \vert^2 h e_{1}}{1+ \vert x_{0} \vert^2}
\Bigr\vert^2 - \frac{c}{t} \sum_{i=2}^N \vert e_{i} - g e_{i} \vert^2
\biggr) d \mu_{\SO{N}}(g).
\end{split}
\end{equation}
Now, we notice that 
\begin{equation*}
\Bigl\vert \frac{ e_{1} 
+ \vert x_{0} \vert^2 h e_{1}}{1+ \vert x_{0} \vert^2}
\Bigr\vert \leq 1.
\end{equation*}
Since $\vert x_{0} \vert^2>1$, we have  $\vert e_{1} 
+ \vert x_{0} \vert^2 h e_{1} \vert >0$. Therefore, we can proceed as in 
the previous paragraph: in the first term inside the second exponential in 
\eqref{CTR_PX0}, we use Remark \ref{proj2}
 to split the radial and tangential costs. The radial cost is here given by 
\begin{equation*}
\begin{split}
\Bigl(1 - \Bigl\vert \frac{ e_{1} 
+ \vert x_{0} \vert^2 h e_{1}}{1+ \vert x_{0} \vert^2}
\Bigr\vert \Bigr)^2
&= \Bigl(1 - \Bigl\vert \frac{ e_{1} 
+ \vert x_{0} \vert^2 h e_{1}}{1+ \vert x_{0} \vert^2}
\Bigr\vert^2 \Bigr)^2 \Bigl(1 + \Bigl\vert \frac{ e_{1} 
+ \vert x_{0} \vert^2 h e_{1}}{1+ \vert x_{0} \vert^2}
\Bigr\vert \Bigr)^{-2}
\\
&= \Bigl[1 - \Bigl( 1 -  \frac{2\vert x_{0} \vert^2}{(1+ \vert x_{0} \vert^2)^2}
\bigl( 1 - \langle e_{1} , h e_{1} \rangle \bigr)
 \Bigr)
\Bigr]^2 \Bigl(1 + \Bigl\vert \frac{ e_{1} 
+ \vert x_{0} \vert^2 h e_{1}}{1+ \vert x_{0} \vert^2}
\Bigr\vert \Bigr)^{-2}
\\
&= \bigl[ \frac{2\vert x_{0} \vert^2}{(1+ \vert x_{0} \vert^2)^2} \bigr]^2
\bigl( 1 - \langle e_{1} , h e_{1} \rangle \bigr)^2
\Bigl(1 + \Bigl\vert \frac{ e_{1} 
+ \vert x_{0} \vert^2 h e_{1}}{1+ \vert x_{0} \vert^2}
\Bigr\vert \Bigr)^{-2}.
\end{split}
\end{equation*}
Up to multiplicative constants, the last term above can be bounded from above
by $(1+ \vert x_{0} \vert^2)^{-2}(1 - \langle e_{1},h e_{1}\rangle)$. 
In particular, up to a modification of the constant $c$ in $p_{x_{0}}(t,v)$, 
we can see the radial cost as a part of the exponential pre-factor in \eqref{CTR_PX0}.
Therefore,
without any ambiguity, we can slightly modify the definition of $
p_{x_{0}}(t,v)$ and assume 
that it writes 
\begin{equation}
\label{FORME_AVEC_COUT}
\begin{split}
&p_{x_{0}}(t,v) = (1\wedge t)^{-N(N-1)/4} \exp \biggl( - \frac{c}{t}
\frac{2 \vert x_{0} \vert^2}{1+ \vert x_{0} \vert^2} \bigl( 1 -   \langle e_{1}, h e_{1} \rangle 
\bigr) \biggr)
\\
&\hspace{15pt} \times
\int_{{\mathrm{SO}}_{N}(\R)} \exp \biggl(  - \frac{c}{t}
\bigl( 1+ \vert x_{0} \vert^2 \bigr) \Bigl\vert g e_{1} - \frac{ e_{1} 
+ \vert x_{0} \vert^2 h e_{1}}{\vert e_{1} + \vert x_{0} \vert^2 h e_{1} \vert }
\Bigr\vert^2  - \sum_{i=2}^N \frac{\vert e_{i} - g e_{i} \vert^2}t \biggr) d \mu_{\SO{N}}(g).
\end{split}
\end{equation}

Equation \eqref{DESINT_HAAR} now yields:
\begin{equation}
\label{APRES_CH_VAR}
\begin{split}
&p_{x_{0}}(t,v)=\frac{1}{(1\wedge t)^{N(N-1)/4}} \exp \biggl( - \frac{c}{t}
\frac{2 \vert x_{0} \vert^2}{1+ \vert x_{0} \vert^2} \bigl( 1 -   \langle e_{1}, h e_{1} \rangle 
\bigr) \biggr)
\\
&\hspace{1pt} \times
\int \exp \biggl(  - \frac{c}{t}\biggr\{
\bigl( 1+ \vert x_{0} \vert^2 \bigr) \bigl\vert s - \bar{s}\bigr\vert^2  + \sum_{i=2}^N \vert e_{i} - 
V_s L_{k} e_{i}\vert^2\biggr\}\biggr)
d \nu_{\Sph^{N-1}}(s) d\mu_{\SO{N-1}}(k),
\end{split}
\end{equation}
the integral being defined on $\Sph^{N-1}\times \SOR{N-1}$,
with 
\begin{equation}
\label{BAR_S}
\bar{s} = ( e_{1} 
+ \vert x_{0} \vert^2 h e_{1})/(\vert e_{1} + \vert x_{0} \vert^2 h e_{1} \vert).
\end{equation}
\vspace{5pt}

\textit{Lower bound.}
Observe first that, for all $i\in \{2,\cdots,N\}$, 
$|e_i-V_s L_{k} e_{i} |^2\le 2( |(\textrm{Id}_{N}-V_s)e_i|^2+|e_i-L_{k} e_{i}|^2)$, using that $V_s$ defines an isometry for the last control. From Lemma \ref{lem:carnot}, we now derive that 
$\sum_{i=2}^N|e_i-L_{k} e_{i}|^2 \le c_1 \|{\rm Id}_{N-1}-k\|^2\le c_2 d_{\SO{N-1}}^2({\rm Id}_{N-1},k)$, where $(c_1,c_2):=(c_1,c_2)(N)$. By \eqref{STROOCK_SALOFF}, applied for $N-1$, we get that 
there exists $C:=C(N)\ge 1$ (the value of which is allowed to increase below) such that
\begin{equation*}
\frac{1}{(1\wedge t)^{(N-1)(N-2)/4}}\int_{\SOR{N-1}}\exp\left(-2 c c_2\frac{d_{\SO{N-1}}^2({\rm Id}_{N-1},k)}{t} \right)
d\mu_{\SO{N-1}}(k)\ge C^{-1}.
\end{equation*}
Thus,
\begin{equation*}
\begin{split}
p_{x_0}(t,v)&\ge \frac{1}{C (1\wedge t)^{(N-1)/2}} \exp \biggl( - \frac{c}{t}
\frac{2 \vert x_{0} \vert^2}{1+ \vert x_{0} \vert^2} \bigl( 1 -   \langle e_{1}, h e_{1} \rangle 
\bigr) \biggr) \\
&\hspace{15pt}\times
\int_{\Sph^{N-1}} \exp \biggl(  - \frac{c}{t}\biggr\{
\bigl( 1+ \vert x_{0} \vert^2 \bigr) \bigl\vert s - \bar{s}
\bigr\vert^2  + 2\sum_{i=2}^N \vert e_{i} - V_se_i \vert^2 \biggr\}\biggr)
d \nu_{\Sph^{N-1}}(s).
\end{split}
\end{equation*}
Let us restrict the integral to a neighborhood of $\bar s$ in $\Sph^{N-1} $
of the form 
\begin{equation}
\label{VOIS}
{\mathcal V}_{\bar s} :=\{ s : \exists \ R\in \SOR{N},\ s= R \bar{s},\ \|R-{\rm Id}_N\|\le t^{1/2}/|x_0| \}. 
\end{equation}
Then,
\begin{equation}
\begin{split}
p_{x_0}(t,v) &\ge \frac{1}{C (1\wedge t)^{(N-1)/2}} \exp \biggl( - \frac{c}{t}
\frac{2 \vert x_{0} \vert^2}{1+ \vert x_{0} \vert^2} \bigl( 1 -   \langle e_{1}, h e_{1} \rangle 
\bigr) \biggr)
\\
&\hspace{15pt} \times
\int_{{\mathcal V}_{\bar s}} \exp \biggl(  - \frac{2c}{t} \sum_{i=2}^N \vert e_{i} - V_s e_i \vert^2 \biggr\}\biggr)
 d\nu_{\Sph^{N-1}}(s).
\end{split}
\label{PREAL_MINO}
\end{equation}
As the set of the $s$'s such that $\langle s,e_{1} \rangle =0$ is of zero measure, we can restrict the integral to the set of 
 $s\in \Sph^{N-1}$ such that $\langle s,e_{1} \rangle \not = 0$. 
 By construction (see \eqref{eq:GS}), 
 $V_se_{i}=s_{i}$, for $i \in \{1,\dots,N\}$, with $s_1=s$ and
\begin{equation}
\label{eq:GS:2}
u_i =e_i-\sum_{k=1}^{i-1}\langle e_i,s_k\rangle s_k,\ s_i:=\frac{u_i}{|u_i|},\ i\in\{ 2,\cdots,N\},\ u_1=s.
\end{equation}
We can write, for $i \in \{2,\dots, N\}$, 
\begin{equation}
\label{eq:GS:3}
\begin{split}
\vert e_{i} - V_se_i \vert^2
= \vert e_{i} - s_i \vert^2 &\le 2 \bigl(|e_i-u_i|^2+|u_i-s_i|^2\bigr)
\\
&= 2 \bigl(|e_i-u_i|^2+|1 - \vert u_i \vert |^2\bigr)
\\
&\leq 2 \bigl(|e_i-u_i|^2+|\vert e_{i}\vert - \vert u_i \vert |^2\bigr) 
\leq 4 \bigl(|e_i-u_i|^2\bigr).
\end{split}
\end{equation}
Now, by \eqref{eq:GS:2},
\begin{equation}
\label{eq:GS:4}
\vert e_i - u_i \vert^2 = 
\sum_{k=1}^{i-1}\langle e_i,s_{k}\rangle^2 
= 
 \langle e_i,s_{1} \pm e_{1} \rangle^2+
\sum_{k=2}^{i-1}\langle e_i,s_{k} - e_{k} \rangle^2
\leq 
 \vert s_{1} \pm e_{1} \vert^2 +
\sum_{k=2}^{i-1}\vert s_{k} - e_{k} \vert^2.
\end{equation}
Therefore, by \eqref{eq:GS:3} and \eqref{eq:GS:4} and by a standard induction,
for all $i \in \{2,\dots,N\}$,
\begin{equation}
\vert e_{i} - s_i \vert^2 \leq \bar C \vert e_{1} \pm s_1 \vert^2
= 2 \bar{C} \bigl(1 \pm \langle e_{1},s \rangle\bigr) 
= 2 \bar{C} \frac{1 - \langle e_{1},s \rangle^2}{1 \mp \langle e_{1},s \rangle}.
\end{equation}
In the above, we can always choose the sign in $\mp$ so that 
$1 \mp \langle e_{1},s \rangle \geq 1$. Therefore, for all $i \in \{2,\dots,N\}$,
\begin{equation}
\label{CTR_RESTES_MINO}
\vert e_{i} - s_i \vert^2 
\leq 2 \bar{C} \bigl( 1 - \langle e_{1},s \rangle^2 \bigr)
= 2 \bar{C} \sum_{k=2}^N \langle e_{k},s \rangle^2. 
\end{equation}
%
%
%
Since, for $s\in {\mathcal V}_{\bar s}$,  
$|\langle s,e_k\rangle|\le |\langle \bar s,e_k\rangle| + t^{1/2}/|x_0|$, we 
deduce from \eqref{CTR_RESTES_MINO}: 
\begin{equation*}
\begin{split}
\sum_{i=2}^N|e_i-s_{i}|^2&\le \bar C \biggl( \sum_{i=2}^N |\langle \bar s,e_i\rangle|^2+  (N-1)t/|x_0|^2\biggr)
\\
&\le \bar C\bigl( 1-\langle \bar s,e_1\rangle ^2 +t/|x_0|^2\bigr) \le \bar C\bigl( 
2(1-\langle \bar s,e_1\rangle)+t/|x_0|^2 \bigr).
\end{split}
\end{equation*}
We derive from  \eqref{PREAL_MINO} that 
\begin{equation*}
\begin{split}
p_{x_0}(t,v)&
\ge \frac{\nu_{\Sph^{N-1}}( {\mathcal V}_{\bar s})
}{\bar C (1\wedge t)^{(N-1)/2}} \exp \biggl( - \frac{\bar C}{t}\Bigl[
\frac{2 \vert x_{0} \vert^2}{1+ \vert x_{0} \vert^2} \bigl( 1 -   \langle e_{1}, h e_{1} \rangle 
\bigr) + \bigl(1-\langle e_1,\bar s\rangle \bigr)\Bigr]\biggr)
\\
&\ge \bar C^{-1} 
 \delta_t^{N-1} \exp \biggl( - \frac{\bar C}{t}\Bigl[
\frac{2 \vert x_{0} \vert^2}{1+ \vert x_{0} \vert^2} \bigl( 1 -   \langle e_{1}, h e_{1} \rangle 
\bigr) + \bigl(1-\langle e_1,\bar s\rangle \bigr)\Bigr]\biggr),
\end{split}
\end{equation*}
denoting, as in Theorem \ref{bds_EXPL_UE}, 
$\displaystyle \delta_{t}  :=  \frac{1\wedge 
 \Bigl( \frac{\displaystyle t^{1/2}}{\displaystyle  |x_0|} \Bigr)}{1\wedge t^{1/2}}$ and using \eqref{VOIS} for the last inequality.
 
Assume first that $\langle e_1,he_1\rangle \le 0  $. The above equation yields 
\begin{equation*}
\begin{split}
p_{x_0}(t,v)& \ge \bar C^{-1}\delta_t^{N-1} \exp \bigl( - \frac{\bar C}{t}\bigr)\ge \bar C^{-1}\delta_t^{N-1} 
\exp \Bigl( - \frac{\bar C}{t} \bigl(1-\langle e_1,he_1\rangle \bigr)\Bigr). 
\end{split}
\end{equation*}
Recalling that, for the tangential cost analysis, we have assumed $|x_0|=|v| $, we derive 
\begin{equation*}
1-\langle e_1,he_1\rangle=1-\langle \frac{x_0}{|x_0|}, \frac{v}{|x_0|}\rangle =\frac1{2|x_0|^2}|x_0-v|^2,
\end{equation*}
 which gives the claim. 
 
 Assume now that $\langle e_1,he_1\rangle \ge 0  $. It can be checked from the definition of $\bar s $ in \eqref{BAR_S} that $\langle e_1,\bar s \rangle\ge \langle e_1,he_1 \rangle  $ so that we eventually get:
\begin{equation*}
p_{x_0}(t,v)\ge \bar C^{-1}\delta_t^{N-1}
\exp \Bigl( - \frac{\bar C}{t} \bigl(1-\langle e_1,he_1\rangle \bigr)\Bigr).
\end{equation*}
We conclude by the same argument as above. \\
\color{black}

\textit{Upper bound.} 
Going back to \eqref{APRES_CH_VAR} and using the fact that $V_{s} \in 
\SOR{N}$ for any $s \in \Sph^{N-1}$, we get
\begin{equation}
\label{eq:nouvelle:BSUP}
\begin{split}
&p_{x_{0}}(t,v) =\frac{1}{(1\wedge t)^{N(N-1)/4}} \exp \biggl( - \frac{c}{t}
\frac{2 \vert x_{0} \vert^2}{1+ \vert x_{0} \vert^2} \bigl( 1 -   \langle e_{1}, h e_{1} \rangle 
\bigr) \biggr)
\\
&\hspace{1pt} \times
\int \exp \biggl(  - \frac{c}{t}\biggr\{
\bigl( 1+ \vert x_{0} \vert^2 \bigr) \bigl\vert s - \bar{s}\bigr\vert^2  + 
\sum_{i=2}^N \vert \hat s_{i} - L_{k} e_{i}\vert^2\biggr\}\biggr)
d \nu_{\Sph^{N-1}}(s) d\mu_{\SO{N-1}}(k), 
\end{split}
\end{equation}
where $\hat s_i=V_s^\top e_i,\ i\in \{ 2,\cdots,N\} $.
We then focus on the integral with respect to $k$, namely
\begin{equation*}
q_t(s):=
(1\wedge t)^{-(N-1)(N-2)/4}\int_{\SOR{N-1}} \exp \Bigl(  - \frac{c}{t}
 \sum_{i=2}^N \vert \hat s_{i} - L_{k} e_{i}\vert^2 \Bigr)
 d\mu_{\SO{N-1}}(k), 
\end{equation*}
for a given $s \in \Sph^{N-1}$, the normalization $(1\wedge t)^{(N-1)(N-2)/4}$ standing 
for the order of the volume of the ball of radius $t^{1/2}$ in $\SOR{N-1}$.
Denoting by $\hat s^{2,N}$ the $N-1$ square matrix 
made of the column vectors $((\hat s_{2})_{j})_{2 \leq j \leq N}$, \dots,
$((\hat s_{N})_{j})_{2 \leq j \leq N}$, where 
$(\hat s_{i})_{j}$ stands for the $j$th coordinate of $\hat s_{i}$, we get
\begin{equation*}
\sum_{i=2}^N \vert \hat s_{i} - L_{k} e_{i}\vert^2 \geq \| \hat s^{2,N} - k \|^2. 
\end{equation*} 

Now, we distinguish two cases. For a given $\varepsilon >0$ 
to be specified next, 
we first consider the case when $\|\hat s^{2,N}-k\| \geq \varepsilon$ for any $k \in \SOR{N-1}$. Then, there 
exists a constant $c':=c'(\varepsilon)>0$ such that 
$\|\hat s^{2,N}-k\| \geq c' d_{\SO{N-1}}({\rm Id}_{N-1},k)$, so that (up to a modification of $c$)
\begin{equation*}
q_t(s) \leq (1\wedge t)^{-(N-1)(N-2)/4}
\int_{\SOR{N-1}} \exp \Bigl(  - \frac{c}{t} d^2_{\SO{N-1}}({\rm Id}_{N-1},k) \Bigr)
 d\mu_{\SO{N-1}}(k) \leq \bar C,
\end{equation*}
for a constant $\bar C:=\bar C(N)$. 

Let us now assume that there exists $k_{0} \in \SOR{N-1}$ such that  $\|\hat s^{2,N}-k_{0}\| \leq \varepsilon$. 
By invariance by rotation of the Haar measure, we notice that $q_{t}(s)$ can be bounded by 
\begin{equation*}
\begin{split}
q_t(s) &\leq 
(1\wedge t)^{-(N-1)(N-2)/4}\int_{\SOR{N-1}} \exp \Bigl(  - \frac{c}{t}
 \| \hat s^{2,N} - k_{0} k \|^2 \Bigr)
 d\mu_{\SO{N-1}}(k)
\\
&= 
(1\wedge t)^{-(N-1)(N-2)/4}\int_{\SOR{N-1}} \exp \Bigl(  - \frac{c}{t}
 \| k_{0}^{\top}\hat s^{2,N} - k \|^2 \Bigr)
 d\mu_{\SO{N-1}}(k). 
 \end{split}
\end{equation*}
Letting $\tilde{s}^{2,N} := k_{0}^{\top} \hat s^{2,N}$, we notice that 
$\|\tilde{s}^{2,N}-{\rm Id}_{N-1}\| \leq \varepsilon$.  
This permits to define $\tilde{S}^{2,N} := \ln(\tilde{s}^{2,N})$ (provided $\varepsilon$
is chosen small enough).   
 
Again, we distinguish two cases, according to the value of the variable $k$ in the integral. 
When $\|\tilde{s}^{2,N} - k\| \geq \varepsilon$, we can use the same trick as before and say 
that  $\|\tilde{s}^{2,N} - k\| \geq c d_{\SO{N}}({\rm Id}_{N-1},k)$. Repeating the computations, 
we get 
\begin{equation*}
(1\wedge t)^{-(N-1)(N-2)/4}\int_{
\|\tilde{s}^{2,N} - k\| \geq \varepsilon} \exp \Bigl(  - \frac{c}{t}
 \| \tilde{s}^{2,N} - k \|^2 \Bigr)
 d\mu_{\SO{N-1}}(k) \leq \bar C.
 \end{equation*}
When $\|\tilde{s}^{2,N} - k\| \leq \varepsilon$, we have 
$\|{\rm Id}_{N-1} - k \| \leq 2 \varepsilon$, so that 
$k$ may be inverted by the logarithm and written as $k=\exp(K)$
for some antisymmetric matrix $K$ of size $N-1$. 
By local Lipschitz property of the logarithm, we deduce that, for such a $k$ (and for a new value of $c'$),
\begin{equation*} 
 \|\tilde{s}^{2,N} - k\| \geq c' \|\tilde{S}^{2,N} - K\|. 
\end{equation*}
We then denote $\tilde{H}^{2,N}$ the orthogonal projection of $\tilde{S}^{2,N}$
on ${\mathcal A}_{N-1}(\R)$. We get 
\begin{equation*} 
 \|\tilde{s}^{2,N} - k\| \geq c' \|\tilde{H}^{2,N} - K\|. 
\end{equation*}
Clearly, $\tilde{H}^{2,N}$ is in the neighborhood of $0$. By local Lipschitz property of the exponential, 
we finally obtain (again, for a new value of $c'$)
\begin{equation*} 
 \|\tilde{s}^{2,N} - k\| \geq c' \|\exp(\tilde{H}^{2,N}) - k\|. 
\end{equation*}
Letting $\tilde{h}^{2,N} : =\exp(\tilde{H}^{2,N})$, we end up with
\begin{equation*}
\begin{split}
&(1\wedge t)^{-(N-1)(N-2)/4}\int_{
\|\tilde{s}^{2,N} - k\| \leq \varepsilon} \exp \Bigl(  - \frac{c}{t} 
 \| \tilde{s}^{2,N} - k \|^2 \Bigr)
 d\mu_{\SO{N-1}}(k) 
\\
&\leq (1\wedge t)^{-(N-1)(N-2)/4}\int_{
\|\tilde{s}^{2,N} - k\| \leq \varepsilon} \exp \Bigl(  - \frac{c}{t} 
 d^2_{\SO{N-1}}(\tilde{h}^{2,N},k) \Bigr)
 d\mu_{\SO{N-1}}(k), 
\end{split}
 \end{equation*}
 where we have used Lemma \ref{lem:carnot} on $\SOR{N-1}$ to get the second line. 
By a new rotation argument, 
\begin{equation*}
 (1\wedge t)^{-(N-1)(N-2)/4}\int_{
\|\tilde{s}^{2,N} - k\| \leq \varepsilon} \exp \Bigl(  - \frac{c}{t} 
 d^2_{\SO{N-1}}(\tilde{h}^{2,N},k) \Bigr)
 d\mu_{\SO{N-1}}(k) \leq \bar C,
 \end{equation*}
 which shows that $q_{t}(s) \leq \bar C$. 

Equation \eqref{eq:nouvelle:BSUP} thus yields:
\begin{equation*}
\begin{split}
p_{x_{0}}(t,v)&\le \frac{\bar C}{(1\wedge t)^{(N-1)/2}} \exp \biggl( - \frac{c}{t}
\frac{2 \vert x_{0} \vert^2}{1+ \vert x_{0} \vert^2} \bigl( 1 -   \langle e_{1}, h e_{1} \rangle 
\bigr) \biggr)
\\
&\hspace{15pt} \times
\int_{\Sph^{N-1}}\exp\biggl(-\frac{c(1+|x_0|^2)}{t}|s-\bar s|^2\biggr)d\nu_{\Sph^{N-1}}(s).
\end{split}
\end{equation*}
Observing now that there exists $\bar c>1$ such that 
$\bar c^{-1}|s-\bar s|\le d(s,\bar s) \le \bar c|s-\bar s|$, where $d$ stands for the Riemannian metric on the
sphere 
$\Sph^{N-1} $, we then deduce from the heat kernel estimates in 
Stroock \cite{stro:06} that 
$$
\frac1{1 \wedge  \Bigl(  \frac{\displaystyle t^{1/2}}{\displaystyle 1 + \vert x_{0} \vert} \Bigr)^{N-1}}
\exp\biggl(-\frac{c(1+|x_0|^2)}{t} 
\vert s - \bar s \vert^2
\biggr) \le  \bar C p_{\Sph^{N-1}}\left(\frac{t}{1+|x_0|^2} ,s,\bar s\right), $$
 where $p_{\Sph^{N-1}}$ stands for the heat kernel on $\Sph^{N-1}$.  
Since we have assumed  $|x_0|\ge 1$ we finally derive up to a modification of $\bar C$:
\begin{equation*}
p_{x_{0}}(t,v) \le \bar C\delta_t^{N-1} \exp \biggl( - \frac{c}{t}
\frac{2 \vert x_{0} \vert^2}{1+ \vert x_{0} \vert^2} \bigl( 1 -   \langle e_{1}, h e_{1} \rangle 
\bigr) \biggr) 
\le \bar C \delta_t^{N-1} \exp \Bigl( - 
\frac{1 -   \langle e_{1}, h e_{1} \rangle }{\bar C t} 
 \Bigr),
\end{equation*}
which gives an upper bound homogeneous to the lower bound and completes the proof.

\section{The degenerate case}
\label{SEC_DEG}

The strategy to complete the proof of Theorem \ref{thm:bd:degenerate} 
relies on an expansion of $Z_{t }$ in terms of iterated integrals of the Brownian motion on the  Lie algebra ${\mathcal A}_{N}(\R) $ of $\SOR{N}$. In that framework, it is worth mentioning that we do not exploit anymore the underlying group structure.  Instead, we explicitly make use of the \textit{Euclidean} structure of ${\mathcal A}_{N}(\R) $. Indeed the analysis relies on precise controls of events described by the whole trajectory of $Z$. We manage to handle the probability of those events by controlling the corresponding trajectories of  the ${\mathcal A}_{N}(\R)$-valued Brownian motion $B$. In that perspective, the heat kernel estimates \eqref{STROOCK_SALOFF} for the marginals of $Z$
in $\SOR{N}$ are not sufficient, as once again, the distribution of the whole path is needed to carry on the analysis.

\subsection{Set-up}
In the whole section, we will assume that degeneracy occurs along the first direction of the space, that is $X_{0}$ has the form:
\begin{equation*}
X_{0} = X_{0}^1 e_{1},
\end{equation*}
where $e_{1}$ is the first vector for the canonical basis and $X_{0}^1$ is a square integrable real-valued random variable. 
Because of the isotropy of the original equation, this choice is not restrictive.
To make things simpler, additionally to the centering assumption, recall  $\E\bigl[X_0^1\bigr]=0 $, we will also suppose 
(without any loss of generality)
that $X_{0}^1$ is  reduced, that is
\begin{equation*}
\E \bigl[ \bigl( X_{0}^1 \bigr)^2 \bigr] = 1.  
\end{equation*}
Given a real $x^1_{0}$, 
we will work under the conditional measure given $\{X^1_{0}=x_0^1 \}$, which we will still denote
by $\P$. Therefore, recalling \eqref{lan4} and \eqref{lan7},
we will write in the whole section
$(X_{t})_{t \geq 0}$ as
\begin{equation}
\label{eq:23:11:2}
X_{t} =  Z_t(x_0^1 e_1)
 - Z_{t} \int_{0}^t  Z_{s}^{\top} d\bar B_s, 
\quad t \geq 0, 
\end{equation}
which is understood as the conditional version of the original process $(X_{t})_{t \geq 0}$ given the initial condition 
$X_{0}=x_0^1 e_1$.
In this framework, the typical scales of $X_{t}$ in small time $t$ are given by:
\begin{equation}
\label{eq:23:11:3}
\E \left[ \vert X_{t}^1 -(Z_t x_0^1 e_1)^1\vert^2 \right] \sim_{t \rightarrow 0} t^2, \quad \E \left[\vert X_{t}^i-(Z_t x_0^1 e_1)^i \vert^2 \right] \sim_{t \rightarrow 0} t, \quad 2 \leq i \leq N,
\end{equation}
showing that the fluctuations of the density is $t$ in the first component and $t^{1/2}$ and the other ones. 
Eq. \eqref{eq:23:11:3} will be proved below.

\subsection{Small time expansions}
The key point in the whole analysis lies in small time expansions of the process $(Z_{t})_{t \geq 0}$ and  of
 the `conditional covariance' matrix 
$C_{t}$ in \eqref{eq:cove}. The precise strategy is to expand both of them in small times, taking care of the tails of the 
remainders in the expansion (recalling that the covariance matrix is random). 
We thus remind the reader of the so-called Bernstein equality, that will play a major role in the whole proof, see e.g. Revuz and Yor \cite{revu:yor:99}:
\begin{prop}
\label{BERNSTEIN}
Let $(M_{t})_{t \geq 0}$ be a continuous scalar martingale satisfying $M_{0}=0$. Then, for any $A>0$ and $\sigma>0$,
\begin{equation*}
\P \Bigl( 
M^*_{t}  \geq A, \ \langle M \rangle_{t} \leq \sigma^2 \Bigr) \leq 2\exp \bigl( - \frac{A^2}{2\sigma^2} \bigr),
\end{equation*}
where we have used the standard notation $M^*_{t}:=\sup_{0 \leq s \leq t} \vert M_{s} \vert$. 
\end{prop}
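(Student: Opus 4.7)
The plan is to reduce the two-sided bound on $M^*_t$ to a one-sided bound on $\sup_{s \leq t} M_s$ (plus its analogue for $-M$) on an event where the bracket is controlled, and then apply the standard exponential martingale trick together with Doob's maximal inequality.

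First, I would introduce the stopping time $\tau := \inf\{s \geq 0 : \langle M \rangle_s > \sigma^2\}$ (with $\inf \emptyset = + \infty$). On the event $\{\langle M \rangle_t \leq \sigma^2\}$, one has $\tau \geq t$, hence $M_s = M_{s \wedge \tau}$ for all $s \leq t$, so that
\begin{equation*}
\P \bigl( M^*_t \geq A,\ \langle M \rangle_t \leq \sigma^2 \bigr) \leq \P \bigl( (M^\tau)^*_t \geq A \bigr) \leq \P \bigl( \sup_{s \leq t} M^\tau_s \geq A \bigr) + \P \bigl( \sup_{s \leq t} (-M^\tau_s) \geq A \bigr).
\end{equation*}
This removes the constraint on the bracket in exchange for working with the stopped martingale $M^\tau$, whose quadratic variation is a.s.\ bounded by $\sigma^2$. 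The factor $2$ in the statement will come from the two symmetric tails above.

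Next, for any $\lambda > 0$, I would consider the exponential process
\begin{equation*}
E^\lambda_s := \exp \bigl( \lambda M^\tau_s - \tfrac{1}{2} \lambda^2 \langle M^\tau \rangle_s \bigr),
\end{equation*}
which is a non-negative continuous local martingale starting at $1$. Since $\langle M^\tau\rangle \leq \sigma^2$ is bounded, Novikov's criterion applies and $E^\lambda$ is a genuine martingale with $\E[E^\lambda_s]=1$. Using once more $\langle M^\tau \rangle_s \leq \sigma^2$, the inclusion
\begin{equation*}
\bigl\{ \sup_{s \leq t} M^\tau_s \geq A \bigr\} \subseteq \bigl\{ \sup_{s \leq t} E^\lambda_s \geq \exp \bigl( \lambda A - \tfrac{1}{2} \lambda^2 \sigma^2 \bigr) \bigr\}
\end{equation*}
holds, and Doob's $L^1$ maximal inequality for the non-negative martingale $E^\lambda$ yields
\begin{equation*}
\P \bigl( \sup_{s \leq t} M^\tau_s \geq A \bigr) \leq \exp \bigl( - \lambda A + \tfrac{1}{2} \lambda^2 \sigma^2 \bigr).
\end{equation*}
Optimizing in $\lambda$ (the choice $\lambda = A/\sigma^2$ is minimal) gives the bound $\exp(-A^2/(2\sigma^2))$. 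Applying the same argument to the continuous martingale $-M$ (whose bracket agrees with that of $M$) produces the analogous bound for the infimum, and summing the two contributions gives the factor $2$ announced in the statement.

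There is no real obstacle here: the only point requiring a little care is the justification that $E^\lambda$ is a true martingale (not just a local one), which is immediate from the uniform bound on $\langle M^\tau\rangle$, and the verification that the event $\{M^*_t \geq A, \langle M\rangle_t \leq \sigma^2\}$ is genuinely covered by $\{(M^\tau)^*_t \geq A\}$, which follows from the defining property of $\tau$.
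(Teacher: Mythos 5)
Your argument is correct; it is the classical exponential-martingale/Doob proof of the Bernstein (or Freedman) inequality. The paper itself does not prove this proposition — it simply cites Revuz and Yor \cite{revu:yor:99} — and your proof is essentially the one given there. One small remark: you do not actually need Novikov's criterion or the true-martingale property, since a nonnegative continuous local martingale is a supermartingale and Doob's maximal inequality for nonnegative supermartingales already gives $\P(\sup_{s\leq t}E^\lambda_s\geq c)\leq \E[E^\lambda_0]/c=1/c$; but the extra step does no harm.
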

\begin{rmk}[Notation for supremums] With a slight abuse of notation,  for a 
process $(Y_t)_{t\ge 0}$ with values in $\R^\ell$, $\ell \geq 1$, 
we will denote $Y_t^*:=\max_{i \in \{1,\dots,\ell\}} (Y_t^{\ell})^*$.
Identifying $\R^{\ell}\otimes \R^k$ with $\R^{\ell \times k}$, we will also freely use those
notations for matrix valued processes.
\end{rmk}

\subsubsection{Landau notations revisited}
\label{subsub:landau}
In order to express the remainders in the expansion of the covariance matrix in a quite simple way,  
we will make a quite intensive use of Landau notation, but in various forms:   

\begin{defi}[Laudau notations]
\label{DEF_DET_REMAINDERS} 
Given some $T>0$, we let:

$(i)$ Given a deterministic function $(\psi_t)_{0 \leq t \leq T} $ (scalar, vector or matrix valued), we write 
$\psi_t={\mathcal O}(t^\alpha)$, for some $\alpha \ge 0$ and for any $t\in [0,T]$ if
there exists a constant $C:=C(N,T)$ such that $|\psi_t| \le Ct^{\alpha}$.

$(ii)$ Given a process $(\Psi_t)_{0 \leq t \leq T} $ (scalar, vector or matrix valued), we write 
$\Psi_t=O(t^\alpha)$, for some $\alpha \ge 0$ and for any $t\in [0,T]$ if
there exists a constant $C:=C(N,T)$ such that $|\Psi_t| \le Ct^{\alpha}$ a.s. Moreover,  
we write 
$\Psi_t=O_\P(t^\alpha)$, for some $\alpha \ge 0$ and for any $t\in [0,T]$ if,  
for all $p\in \N^*$, there exists a constant $C:=C(N,T,p)$ such that $\E[|\Psi_t|^p]^{1/p}\le Ct^{\alpha}$.
\end{defi}

\subsubsection{Small time expansion of the Brownian motion on ${\rm{SO}}_{N}(\R)$}
Following the proof of Lemma \ref{l4}, we then expand $Z_{t}$ according to 
$$Z_t = 
\exp\bigl(-(N-1)t\bigr)\left({\rm Id}_N+B_t+S_{t} \right) = \exp\bigl(-(N-1)t\bigr)\left({\rm Id}_N+B_t+\int_0^tdB_s B_s  +R_{t} \right),$$
for $t \geq 0$, with 
\begin{equation*}
S_{t} = \int_{0}^t dB_{s} \int_{0}^s dB_{r}\tilde{Z}_{r}, \quad R_{t} = \int_{0}^t dB_{s} \int_{0}^s dB_{r} 
\int_{0}^r dB_{u}\tilde{Z}_{u},
\quad \tilde{Z}_{t} = \exp\bigl(N-1)t\bigr) Z_{t}. 
\end{equation*}
Given some time horizon $T>0$,
the remainders $(S_{t})_{0 \leq t \leq T}$ and
$(R_{t})_{0 \leq t \leq T}$ can be controlled as follows on $[0,T]$:
\begin{lemma}
\label{lem:22:11:1} There exists $C:=C(N,T) >0$ such that, for all $t \in [0,T]$ and $y>0$, 
\begin{equation*}
\begin{split}
&\P \bigl(  S_{t}^* 
\geq y \bigr) \leq 2 \exp 
\bigl( -  \frac{\vert y \vert}{Ct} \bigr), \quad
 \P \bigl(   R_{t}^*  \geq y \bigr) \leq 2  \exp \bigl( -  \frac{\vert y \vert^{2/3}}{Ct} \bigr).
\end{split}
\end{equation*}
\end{lemma}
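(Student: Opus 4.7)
The plan is to apply Bernstein's inequality (Proposition \ref{BERNSTEIN}) in a cascading fashion: once for $S_t$ and twice for $R_t$, with an optimization over the intermediate threshold at each stage. The underlying reason the tail for $R_t$ is heavier ($\exp(-y^{2/3}/(Ct))$ instead of $\exp(-y/(Ct))$) is that each additional layer of stochastic integration against $B$ forces the previous bound to appear squared inside a new bracket.

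First I would set up the pre-integrable structure. Since $Z_r \in \SOR{N}$ has entries in $[-1,1]$ and $\tilde{Z}_r = e^{(N-1)r}Z_r$, the entries of $\tilde{Z}_r$ are bounded by a constant depending only on $(N,T)$ on $[0,T]$. Writing entrywise, with $M_s^{k,j}:=\sum_\ell \int_0^s (dB_r)^{k,\ell}\tilde Z_r^{\ell,j}$, the process $(M_s^{k,j})_{s\ge 0}$ is a continuous scalar martingale with deterministic bracket $\langle M^{k,j}\rangle_s \leq Ct$ on $[0,t]$. Since $(S_t)^{i,j}=\sum_k \int_0^t (dB_s)^{i,k} M_s^{k,j}$, its bracket is $\langle S^{i,j}\rangle_t = \sum_k \int_0^t (M_s^{k,j})^2 ds$.

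For the bound on $S_t$, two applications of Proposition \ref{BERNSTEIN} give, for any threshold $A>0$,
\begin{equation*}
\P\bigl((M^{k,j})_t^*\ge A\bigr)\le 2\exp\bigl(-A^2/(Ct)\bigr),\qquad
\P\bigl((S^{i,j})_t^*\ge y,\ (M^{\cdot,\cdot})_t^*\le A\bigr)\le 2\exp\bigl(-y^2/(CA^2 t)\bigr),
\end{equation*}
the second inequality using that on the event $\{(M^{\cdot,\cdot})_t^*\le A\}$ one has $\langle S^{i,j}\rangle_t \le CA^2 t$. Combining and summing over matrix entries produces
\begin{equation*}
\P(S_t^*\ge y)\le C\exp\bigl(-A^2/(Ct)\bigr)+C\exp\bigl(-y^2/(CA^2 t)\bigr).
\end{equation*}
Optimizing by equating exponents, i.e.\ $A^2=y$, both terms become $\exp(-y/(Ct))$, yielding the first inequality of the lemma after adjusting constants.

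For $R_t$, the identical scheme is iterated one more level: $(R^{i,j})_t$ is a continuous martingale whose bracket satisfies $\langle R^{i,j}\rangle_t \le C\int_0^t |S_s|^2 ds \le CA_1^2 t$ on the event $\{S_t^*\le A_1\}$. Applying Bernstein conditionally on that event and combining with the previously established tail $\P(S_t^*\ge A_1)\le C\exp(-A_1/(Ct))$ gives
\begin{equation*}
\P(R_t^*\ge y)\le C\exp\bigl(-A_1/(Ct)\bigr)+C\exp\bigl(-y^2/(CA_1^2 t)\bigr).
\end{equation*}
Here the two exponents balance when $A_1=y^{2/3}$, producing the announced $\exp(-y^{2/3}/(Ct))$ tail. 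The argument is essentially routine once the cascading structure is identified; the only real bookkeeping is the dependence of the constants on $N$ and $T$, and the step from tail bounds on individual entries $(S^{i,j})_t^*$ to the uniform entrywise maximum $S_t^*$ (which costs only a finite union bound absorbed into $C$). No obstacle beyond this careful bracket tracking arises, since the boundedness of $\tilde Z$ on $[0,T]$ removes all need to re-enter the group structure.
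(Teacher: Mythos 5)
Your proposal is correct and follows essentially the same cascading-Bernstein strategy as the paper: bound each level of iterated integral on the event that the previous level is controlled, then optimize the intermediate threshold(s) to balance the exponents. The only cosmetic difference is one of bookkeeping — you re-use the optimized tail bound for $S_t^*$ (namely $\P(S_t^*\ge A_1)\le C\exp(-A_1/(Ct))$) as the input to the $R_t^*$ estimate, whereas the paper chooses $y_1=y^{1/3}$, $y_2=y^{2/3}$, $y_3=y$ and adds all three Bernstein inequalities from scratch; both give $\exp(-y^{2/3}/(Ct))$.
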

\begin{proof} 
Applying Bernstein's inequality
componentwise and using the fact that
$\| \tilde Z_r\|\le \exp((N-1)T)$ for $r \in [0,T]$, there exists a constant $C>0$ such that, for all $t \in [0,T]$,
\begin{equation}
\label{eq:25:12:7}
\P\biggl( \sup_{0 \leq s \leq t} \biggl\vert \int_{0}^s dB_{r} \tilde{Z}_{r} \biggr\vert \geq y_{1} \biggr)
\leq 2 \exp \left( - \frac{y_{1}^2}{Ct} \right),
\end{equation}
for any $y_{1}>0$.
By Bernstein's inequality again,
\begin{equation}
\label{eq:25:12:8}
\P\biggl( 
S_{t}^* 
\geq y_{2}, 
\int_{0}^t \biggl\vert \int_{0}^s dB_{r} \tilde Z_{r} \biggr\vert^2 ds \leq t y_{1}^2 \biggr) 
\leq 2 \exp \left( - \frac{y_{2}^2}{C t y_{1}^2} \right),
\end{equation}
for any $y_{2}>0$. 
Similarly,
\begin{equation}
\label{eq:25:12:9}
\P\biggl( 
R_{t }^* 
\geq y_{3}, 
\int_{0}^t \vert S_{s} \vert^2 ds \leq t y_{2}^2 \biggr) 
\leq 2 \exp \left( - \frac{y_{3}^2}{C t y_{2}^2} \right), 
\end{equation}
for $y_{3} >0$. Choosing $y_{2} = \vert y \vert$ and $y_{1} = \vert y \vert^{1/2}$, we complete the proof 
of the first inequality
by adding \eqref{eq:25:12:7} and \eqref{eq:25:12:8}.
Choosing $y_{3} = \vert y \vert$, $y_{2} = \vert y \vert^{2/3}$ and $y_{1} = \vert y \vert^{1/3}$, we complete the proof
of the second inequality by adding \eqref{eq:25:12:7}, \eqref{eq:25:12:8} and \eqref{eq:25:12:9}. 
\end{proof}

What really counts in the sequel is the first column $(Z_{t}^{\cdot,1})$ of the matrix $Z_{t}$. 
By antisymmetry of the matrix-valued process $(B_{t})_{t \geq 0}$, 
the entries of the column $(Z_{t}^{\cdot,1})$ write
\begin{equation}
\label{eq:2:12:1}
\begin{split}
Z_{t}^{1,1} &= \exp[-(N-1)t]
\biggl(
1 + \sum_{j=2}^N \int_{0}^t  dB^{1,j}_{s} B_{s}^{j,1} + R^{1,1}_{t} \biggr)
\\
&= 1 - \frac{N-1}{2} t - \frac{1}{2} \sum_{j=2}^N  (B^{j,1}_{t})^2 
+ O \bigl( t^2 + t \vert B_{t}^{.,1} \vert^2 + \vert R_{t} \vert \bigr),
\\
 Z_{t}^{i,1} 
 &= \exp[-(N-1)t]
\biggl(B_{t}^{i,1} + S_t^{i,1}
\biggr)
 = \bigl( 1 + {\mathcal O}(t) \bigr) \biggl( B_{t}^{i,1} + S_t^{i,1}
\biggr), \quad i \in \{2,\dots,N\}.
\end{split}
\end{equation}

\subsubsection{Expression of the covariance matrix}
By \eqref{eq:cove} and \eqref{barb2}, we know 
\begin{equation}
\label{eq:cove2}
C_{t} = \int_{0}^t Z_{t}Z_{s}^{\top} \frac{d}{ds} \langle \bar{B}\rangle_{s} \bigl(Z_{t}{Z}_{s}^{\top}
\bigr)^{\top}ds. 
\end{equation}
By \eqref{barb2} and \eqref{DEV_COV}, 
we have
\begin{equation}
\label{barb2b}
\frac{d}{ds}\langle \B \rangle_{s} = \Lambda_{s}
= \frac{1}{N} \left[ N-1  + \exp(-2Ns) \right] \textrm{Id}_{N} - \exp(-2Ns) e_{1} \otimes e_{1}.
\end{equation}
We then notice that 
$C_{t}$ reads 
\begin{equation*}
{C}_{t} = \int_{0}^t \bar Z_s \Lambda_{t-s} \bar Z_{s}^\top ds,
\end{equation*}
where we have denoted $\bar Z_s:=Z_tZ_{t-s}^{\top},\ s\in [0,t] $.
 By the invariance in law of Lemma \ref{lem:27:12:1}, we know that 
 $(Z_{s})_{0 \leq s \leq t}$ and $(\bar Z_{s})_{0 \leq s \leq t}$ have the same law. 
 In particular, noting that $Z_t=\bar Z_t $, the following identity in law holds:
 \begin{equation}
\label{ID_LAW_MIN}
 (Z_t,C_t)\overset{({\rm law})}{=}(Z_t,\bar C_t),
 \end{equation}
where 
 \begin{equation}
\label{eq:27:12:6:bis}
\bar C_t:=\int_{0}^t Z_{s} \Lambda_{t-s} Z_{s}^{\top} ds.
\end{equation}
Proposition \ref{exp} thus yields:
\begin{equation}
\label{REP_DENS_DEG_APRES_ID_EN_LOI}
f_{x_0}(t,v) 
= \E \biggl[ (2 \pi)^{-N/2} {\rm det}^{-1/2} ( \bar C_t)
\exp \left( - \frac{1}{2} \bigl\langle v - Z_{t} x_{0}, \bar C_t^{-1} \bigl(v - Z_{t} x_{0} \bigr) \bigr\rangle \right) 
\biggr].
\end{equation}
Now, by \eqref{barb2b} and \eqref{eq:27:12:6:bis}, we can expand $\bar C_{t}$ into
 \begin{equation}
\label{eq:27:12:6}
\bar C_t:=\int_{0}^t Z_{s}\bigl( (1 - 2(t-s)) \textrm{Id}_{N} - (1 - 2N(t-s)) e_{1} \otimes e_{1} \bigr) Z_{s}^{\top} ds + O(t^3).
\end{equation}
%
%
%

\subsubsection{Expansion of the covariance matrix}
\label{subsubse:exp:cov}
We now expand the integrand that appears in 
\eqref{eq:27:12:6}.
\begin{equation}
\label{eq:26:12:5}
\begin{split}
&Z_{s} \bigl( (1 - 2(t-s)) \textrm{Id}_{N} - (1 - 2N(t-s)) e_{1}\otimes e_{1} \bigr) Z_{s}^{\top}\\
&= (1- 2(t-s)) \textrm{Id}_{N} - (1- 2 N(t-s)) (Z_{s}  e_{1}) \otimes (Z_{s} e_{1}) 
\\
&= (1- 2(t-s)) \textrm{Id}_{N} - (1- 2N(t-s)) Z_{s}^{\cdot,1} \otimes Z_{s}^{\cdot,1}.
\end{split}
\end{equation}
By \eqref{eq:2:12:1}, the entries 
of $(Z_s^{\cdot,1})\otimes (Z_s^{\cdot,1})=:{\mathcal T}_s$ write for all $s\in [0,T]$ 
and
 all $i,j \in \{2,\cdots,N\}$, 
\begin{equation}
\label{eq:22:11:5}
\begin{split}
{\mathcal T}_s^{1,1} &= 1-(N-1)s-\sum_{j=2}^N \bigl(B_s^{j,1}\bigr)^2
                +O\bigl(s^{2}  + \vert B_{s}^{.,1} \vert^4 + \vert R_{s} \vert + \vert R_{s} \vert^2 \bigr),
\\
{\mathcal T}_s^{1,i}&={\mathcal T}_s^{i,1}=
\bigl(1 +{\mathcal O}(s) \bigr) \bigl(B_s^{i,1}+ S_{s}^{i,1}\bigr) \bigl( 1 + S_{s}^{1,1} \bigr)
\\
&=B_s^{i,1}+O\bigl( \vert S_s^{-1,1}\vert + \vert S_{s} \vert^2 + \vert B_{s}^{i,1} \vert 
\vert S_{s} \vert 
+ s \vert B_{s}^{i,1} \vert 
\bigr),
\\
{\mathcal T}_s^{i,j}&={\mathcal T}_s^{j,i}=\bigl(1 +{\mathcal O}(s)\bigr)
\bigl(B_s^{i,1}+ S_{s}^{i,1} \bigr) \bigl(B_s^{j,1}+S_{s}^{j,1}
             \bigr)
             \\
             &=B_s^{i,1}B_s^{j,1}+O \bigl( 
             \vert B_{s}^{.,1} \vert \bigl\vert S_{s} \bigr\vert + \bigl\vert S_{s} \bigr\vert^2 + s \vert B_{s}^{.,1} \vert^2
             \bigr),
\end{split}
\end{equation}
where 
we have used the identity $Z_{s}^{1,1}=(1+ {\mathcal O}(s))(1+S_{s}^{1,1})$
in the second line and the notation 
$S_{s}^{-1,1}:=(0,S_{s}^{2,1},\dots,S_{s}^{N,1})$ in the third one.

Denoting by $\bar{{\mathcal T}}_{s}:=
(1- 2(t-s)) \textrm{Id}_{N} - (1- 2N(t-s)) {\mathcal T}_{s}$ the 
last term in \eqref{eq:26:12:5}, it be expanded as
\begin{equation*}
\begin{split}
&\bar{\mathcal T}_{s}^{1,1} = 2 (N-1) (t-s) + (N-1)s + \sum_{j=2}^N \bigl(B_s^{j,1}\bigr)^2
              +O\bigl(t^{2}  + (( B_{t}^{.,1})^*)^4 + R_{t}^* + (R_{t}^*)^2 \bigr),
\\
&\bar{\mathcal T}_{s}^{i,i} =  1 - (B_{s}^{i,1})^2 + 
 O\bigl( t+
            (B_t^{.,1})^*   S_{t}^*  + ( S_{t}^* )^2 + t ((B_{t}^{.,1})^* )^2
 \bigr),             
              \\
&\bar{\mathcal T}_{s}^{1,i} = \bar{\mathcal T}_{s}^{i,1} = - B_s^{i,1} +
O\bigl( t+ (S_t^{-1,1})^*+ ( S_{t}^* )^2 +   (B_{t}^{.,1})^*  S_{t}^*  
+ t (B_{t}^{.,1})^*  
\bigr),             
\\
&\bar{\mathcal T}_{s}^{i,j} = \bar{\mathcal T}_{s}^{j,i} = - B_s^{i,1}B_s^{j,1} +
O\bigl(  t+ (B_{t}^{.,1})^*   S_{t}^* + (S_{t}^*)^2 + t ((B_{t}^{.,1})^*)^2
 \bigr), \quad i  \not = j, 
\end{split}
\end{equation*}
 for $(i,j) \in \{2,\dots,N\}^2$. 
By \eqref{eq:27:12:6} and by integration, 
we thus derive the following expansions for the entries of $\bar C_t$: for all $(i,j) \in \{2,\dots,N\}^2$,
\begin{equation} 
\label{CTR_COV_ENTRIES}
\begin{split}
&(\bar C_t)^{1,1}=\int_0^t\sum_{j=2}^N (B_s^{j,1})^2ds +\frac32(N-1)t^2+
t O\bigl(t^{2} + ( (B_{t}^{.,1})^* )^4 + R_{t}^* + (R_{t}^*)^2 \bigr),
\\
&(\bar C_t)^{i,i}=t -\int_0^t(B_s^{i,1})^2ds +t O\bigl( t+
            (B_t^{.,1})^*   S_{t}^*  + ( S_{t}^* )^2 + t ((B_{t}^{.,1})^* )^2
 \bigr)
,
\\
&(\bar C_t)^{1,i} =(\bar C_t)^{i,1}=- \int_0^t B_s^{i,1}ds
+ t O\bigl( t+ (S_t^{-1,1})^*+ ( S_{t}^* )^2 +   (B_{t}^{.,1})^*  S_{t}^*  
+ t (B_{t}^{.,1})^*  
\bigr),
\\
&(\bar C_t)^{i,j}=(\bar C_t)^{j,i}=-\int_0^tB_s^{i,1}B_s^{j,1}ds +
t O\bigl(  t+ (B_{t}^{.,1})^*   S_{t}^* + (S_{t}^*)^2 + t ((B_{t}^{.,1})^*)^2
 \bigr), \quad i  \not = j. 
\end{split}
\end{equation}
By \eqref{eq:23:11:2}, Eq. 
\eqref{eq:23:11:3} follows from the bounds for $(\bar{C})_{1,1}$ and $(\bar{C}_{i,i})_{2 \leq i \leq N}$.

\subsection{Proof of the Lower Bound in Theorem \ref{thm:bd:degenerate}}
\label{SD_EST}
We start  from the representation formula \eqref{REP_DENS_DEG_APRES_ID_EN_LOI} derived from the identity in law \eqref{ID_LAW_MIN}. We insist here that we choose some `untypical' events for the Brownian path on ${\mathcal A}_N(\R) $ to derive the bounds of Theorem \ref{thm:bd:degenerate}.

\subsubsection{First Step}
The point is to find some relevant scenarios to explain the typical behavior of 
$f_{x_0}(t,v)$ in \eqref{REP_DENS_DEG_APRES_ID_EN_LOI}
. Given $\xi \in  (0,1]$ 
such that $ t /\xi^2 \leq 1$ and $\gamma \in (0,1]$, we thus introduce the events
\begin{equation}
\label{eq:26:12:1}
\begin{split}
&{\mathcal B}^{1} = \bigcap_{j=2}^N \left\{ \sup_{0 \leq s \leq t} \vert B^{j,1}_{s} - \frac{s}{t} \xi \vert \leq 
\gamma
\frac{t}{\xi} \right\}, \quad
{\mathcal B}^{i,j} = 
\biggl\{ 
\sup_{0 \leq s \leq t} \biggl\vert  
 \int_{0}^s  dB^{i,j}_{r} B_{r}^{j,1} \biggr\vert \leq t
\biggr\}, 
\end{split}
\end{equation}
for  $i,j \in \{2,\dots,N\}$.
We then let
\begin{equation}
\label{eq:26:12:2}
{\mathcal B} = {\mathcal B}^1 \cap \bigcap_{i,j=2}^N {\mathcal B}^{i,j}. 
\end{equation}
\begin{lemma}
\label{lem:27:12:5}
There exists a constant $c>0$ such that 
\begin{equation*}
\P \left( {\mathcal B}\right) \geq c \gamma^{(N-1)/2}
\Bigl(  1 \wedge \bigl( \frac{t^{1/2}}{\xi} \bigr) \Bigr)^{N(N-1)/2} \exp \Bigl(
 - (N-1) \frac{\xi^2}{t} \Bigr).
\end{equation*}
\end{lemma}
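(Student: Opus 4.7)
The plan is to exploit the independence structure of the antisymmetric matrix-valued Brownian motion $B$ and to estimate $\P(\mathcal{B}^1)$ and $\P(\bigcap_{i,j}\mathcal{B}^{i,j}\mid \mathcal{B}^1)$ separately. Recall that $(B^{j,1})_{2\le j\le N}$ form $N-1$ independent standard scalar BMs and that the entries $(B^{i,j})_{2\le i<j\le N}$ form a further independent family of $\binom{N-1}{2}$ BMs, jointly independent of the first family; all other entries are obtained by antisymmetry. Set $\mathcal{F}^{\cdot,1}:=\sigma(B^{j,1}:2\le j\le N)$. Since $\mathcal{B}^1\in\mathcal{F}^{\cdot,1}$, a natural decomposition is
\[
\P(\mathcal{B})=\E\!\left[\mathbf{1}_{\mathcal{B}^1}\,\P\!\left(\bigcap_{i,j=2}^N\mathcal{B}^{i,j}\,\Big|\,\mathcal{F}^{\cdot,1}\right)\right].
\]

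For $\P(\mathcal{B}^1)$, I would apply a Cameron--Martin shift to each coordinate. Under the measure $\mathbb{Q}^j$ with density $\exp((\xi/t)B^{j,1}_t-\xi^2/(2t))$, the process $\tilde B^{j,1}_s:=B^{j,1}_s-(s/t)\xi$ is a standard BM, and inverting the Radon--Nikodym derivative gives
\[
\P\!\left(\sup_{s\le t}|B^{j,1}_s-(s/t)\xi|\le \gamma t/\xi\right)=e^{-\xi^2/(2t)}\,\E_{\mathbb{Q}^j}\!\left[e^{-(\xi/t)\tilde B^{j,1}_t}\,\mathbf{1}_{\sup|\tilde B^{j,1}|\le \gamma t/\xi}\right].
\]
On the indicator, $|(\xi/t)\tilde B^{j,1}_t|\le \gamma\le 1$, so the exponential inside the expectation is bounded below by $e^{-1}$. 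This reduces the problem to the one-dimensional Brownian small-ball probability $\P(\sup_{s\le t}|W|\le \gamma t/\xi)$, for which the first Dirichlet eigenfunction of the Laplacian on $[-\gamma t/\xi,\gamma t/\xi]$ delivers a polynomial prefactor of order $\gamma^{1/2}(1\wedge t^{1/2}/\xi)$ per coordinate. Taking the $(N-1)$-fold product yields $\P(\mathcal{B}^1)\gtrsim \gamma^{(N-1)/2}(1\wedge t^{1/2}/\xi)^{N-1}\,e^{-(N-1)\xi^2/(2t)}$.

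For the conditional probability of $\bigcap \mathcal{B}^{i,j}$, note that given $\mathcal{F}^{\cdot,1}$ each $I^{i,j}_s:=\int_0^s dB^{i,j}_r\,B^{j,1}_r$ is a continuous Gaussian martingale in $s$ (a linear functional of $B^{i,j}$), with quadratic variation $\int_0^s|B^{j,1}_r|^2\,dr$. On $\mathcal{B}^1$, the pathwise bound $|B^{j,1}_r|\le (r/t)\xi+\gamma t/\xi$ combined with $t\le \xi^2$ and $\gamma\le 1$ gives $\int_0^t|B^{j,1}_r|^2\,dr\le C\xi^2 t$; a conditional Gaussian (Bernstein-type) small-ball estimate then produces $\P(\mathcal{B}^{i,j}\mid\mathcal{F}^{\cdot,1})\gtrsim 1\wedge t^{1/2}/\xi$ on $\mathcal{B}^1$. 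The subtlety here is that the antisymmetry $B^{i,j}=-B^{j,i}$ couples $\mathcal{B}^{i,j}$ and $\mathcal{B}^{j,i}$ (both are functionals of the single Brownian motion $B^{i,j}$), so they must be analyzed jointly: on $\mathcal{B}^1$ the conditional covariance of $(I^{i,j}_t,I^{j,i}_t)$ is nearly rank-one because $B^{i,1}$ and $B^{j,1}$ both concentrate around the common profile $(r/t)\xi$, and a two-dimensional Gaussian estimate gives a joint lower bound of order $1\wedge t^{1/2}/\xi$ per unordered pair. Multiplying over the $\binom{N-1}{2}$ pairs contributes a factor $(1\wedge t^{1/2}/\xi)^{(N-1)(N-2)/2}$.

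Combining both ingredients yields
\[
\P(\mathcal{B})\gtrsim \gamma^{(N-1)/2}(1\wedge t^{1/2}/\xi)^{(N-1)+(N-1)(N-2)/2}\,e^{-(N-1)\xi^2/(2t)},
\]
and the polynomial exponent collapses to $N(N-1)/2$. Since $e^{-(N-1)\xi^2/(2t)}\ge e^{-(N-1)\xi^2/t}$, the claim follows. The main obstacle is the conditional pair-wise step: one must justify that the joint probability of $\mathcal{B}^{i,j}\cap \mathcal{B}^{j,i}$ degenerates only as $1\wedge t^{1/2}/\xi$ rather than its square, which forces a careful description of the near-degenerate conditional covariance ellipse on $\mathcal{B}^1$; the small-ball extraction of the polynomial factor $\gamma^{1/2}(1\wedge t^{1/2}/\xi)$ after the Cameron--Martin reduction is the other place where care is needed.
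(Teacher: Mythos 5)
Your proposal follows the same route as the paper's proof: condition on $\sigma((B^{j,1})_{2\le j\le N})$, use a Cameron--Martin/Girsanov shift to remove the drift $(s/t)\xi$ and lower-bound $\P(\mathcal B^1)$, and then lower-bound the conditional probability of $\bigcap\mathcal B^{i,j}$ by a conditional Gaussian small-ball estimate after a Brownian time change.

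Your attention to the coupling between $\mathcal B^{i,j}$ and $\mathcal B^{j,i}$ is well placed and, in fact, points at something the paper's proof passes over in silence. The paper writes $\P(\bigcap_{i,j=2}^N\mathcal B^{i,j}\mid\mathcal B^1)=\prod_{2\le i<j\le N}\P(\mathcal B^{i,j}\mid\mathcal B^1)$, which tacitly treats $\mathcal B^{i,j}$ and $\mathcal B^{j,i}$ as one event; but they differ ($\int_0^s dB^{i,j}_r B^{j,1}_r$ versus $-\int_0^s dB^{i,j}_r B^{i,1}_r$), and conditional independence given $\mathcal F^{\cdot,1}$ only factorizes over \emph{unordered} pairs, so the correct per-pair factor is $\P(\mathcal B^{i,j}\cap\mathcal B^{j,i}\mid\mathcal F^{\cdot,1})$, which is smaller. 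Your observation that on $\mathcal B^1$ the conditional covariance of $(I^{i,j}_t,I^{j,i}_t)$ is near-rank-one (because $B^{i,1}_r$ and $B^{j,1}_r$ both concentrate on $(r/t)\xi$, so $I^{j,i}\approx -I^{i,j}$) is exactly what is needed to close this gap, and is not made explicit in the paper.

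On the other hand, the polynomial small-ball prefactor you invoke is not correct as stated. For $a\le\sqrt t$ (which holds here since $a=\gamma t/\xi\le\gamma\sqrt t$), the Dirichlet expansion gives $\P(\sup_{s\le t}|W_s|\le a)\asymp\exp(-\pi^2 t/(8a^2))$; with $a=\gamma t/\xi$ this reads $\exp(-\pi^2\xi^2/(8\gamma^2 t))$, a purely exponential factor, and no $\gamma^{1/2}(1\wedge t^{1/2}/\xi)$ per-coordinate prefactor can be extracted. The same applies to the conditional estimate: after the time change one is looking at $\P(\sup_{u\le 4\xi^2 t}|\beta_u|\le t)$, which for $t\ll\xi^2$ is $\asymp\exp(-\pi^2\xi^2/(2t))$, not $\gtrsim t^{1/2}/\xi$. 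The paper's own proof makes identical assertions (with $\gamma$ in place of your $\gamma^{1/2}$) without justification, so this is a shared looseness rather than one you introduced; it is benign in context because $\gamma$ is later fixed and the extra $\exp(-C(\gamma)\xi^2/t)$ is absorbed into a generic constant in the exponent when the lemma is applied, but neither argument literally yields the prefactors and the exact constant $N-1$ written in the lemma's display.
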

\begin{proof}
On the event ${\mathcal B}^1$, it holds, for all $j \in \{2,\dots,N\}$,
\begin{equation*}
(B_{t}^{j,1})^* = \sup_{0 \leq s \leq t} \vert B_{s}^{j,1} \vert \leq  
\Bigl( \xi + \gamma \frac{t}{\xi} 
\Bigr) \leq 2 \xi,
\end{equation*}
since we have $\gamma t/ \xi^2 \leq 1$.

 By independence of $B^{j,i}$ and $B^{k,1}$ for $i,j,k \in \{2,\dots,N\}$, we also know that, conditionally on ${\mathcal B}^1$, 
 the process 
$ 
  (\int_{0}^s  dB_{r}^{i,j} B_{r}^{j,1})_{0 \leq s \leq t}$
 behaves as a Wiener integral, with
a variance process less than $(4 \xi^2 s)_{0 \leq s \leq t}$. Therefore,  using a Brownian change of time, we obtain
 \begin{equation*}
\begin{split}
\P \left( {\mathcal B}^{i,j} \vert {\mathcal B}^1 \right) \geq 
\P \bigl( \sup_{0 \leq s \leq 4 \xi^2 t} \vert \beta_{s} \vert \leq t
\bigr),
\end{split}
\end{equation*}
where $(\beta_{s})_{s \geq 0}$ is a 1D Brownian motion. We deduce that there exists a constant $c>0$ (which value is allowed to increase from line to line) such that 
\begin{equation*}
\P \left( {\mathcal B}^{i,j} \vert {\mathcal B}^1 \right) \geq
 c \Bigl( 1 \wedge \bigl( \frac{t^{1/2}}{\xi}\bigr) \Bigr). 
\end{equation*}
In fact, we must bound from below the conditional probability
$
\P (  \cap_{i,j=2}^N {\mathcal B}^{i,j}
\vert {\mathcal B}^1)$.
By antisymmetry of the matrix $B$ 
and conditional independence of the processes $(B^{i,j})_{2 \leq i < j \leq N}$, we deduce that 
\begin{equation*}
\P \biggl( \bigcap_{i,j=2}^N {\mathcal B}^{i,j} \, \vert {\mathcal B}^1 
\biggr) = \prod_{2 \leq i < j \leq N}
\P \left( {\mathcal B}^{i,j} \vert {\mathcal B}^1 \right) 
\geq c^{(N-1)(N-2)/2} \left( \min (1,\xi^{-1} t^{1/2}) \right)^{(N-1)(N-2)/2}. 
\end{equation*}
It thus remains to bound $\P({\mathcal B}^1)$ from below. 
For some $j \in \{2,\dots,N\}$, we deduce from Girsanov's theorem that 
\begin{equation*}
\begin{split}
&\P\biggl( \sup_{0 \leq s \leq t} \vert B^{j,1}_{s} - \frac{s}{t} \xi \vert \leq \frac{\gamma t}{\xi} \biggr)
=\E
\biggl[ \exp \Bigl( - 	\frac{\xi}{t}  \beta_{t} - \frac{\xi^2}{2 t}  \Bigr)
{\mathbf 1}_{\{\sup_{0 \leq s \leq t} \vert \beta_{s} \vert \leq \gamma  t/\xi\}}
\biggr]
\\
&\hspace{15pt}\geq \exp \Bigl( - 1 - \frac{\xi^2}{2 t}  \Bigr)
\P \Bigl( 
\sup_{0 \leq s \leq t} \vert \beta_{s} \vert \leq \frac{\gamma  t}\xi
\Bigr)
\geq c \gamma  \Bigl( 1 \wedge \bigl( \frac{t^{1/2}}{\xi}\bigr) \Bigr)
 \exp \Bigl( 
- \frac{\xi^2}{2t}
\Bigr), 
\end{split}
\end{equation*}
where $(\beta_{s})_{s \geq 0}$ is a 1D Brownian motion. 
By independence of the processes $(B^{1,j})_{2 \leq j \leq N}$, we deduce that  
\begin{equation*}
\P \left( {\mathcal B}^1 \right) \geq c^{N-1} \gamma^{N-1}
\Bigl( 1 \wedge \bigl( \frac{t^{1/2}}{\xi}\bigr) \Bigr)^{N-1} \exp \Bigl( - (N-1) \frac{\xi^2}{2t} \Bigr). 
\end{equation*}
We finally deduce that 
\begin{equation*}
\P \left( {\mathcal B} \right) \geq c^{N(N-1)/2} 
\gamma^{N-1}
 \exp \Bigl( - (N-1) \frac{\xi^2}{2t} \Bigr)
 \Bigl( 1 \wedge \bigl( \frac{t^{1/2}}{\xi}\bigr) \Bigr)^{N(N-1)/2}.
\end{equation*}
\end{proof}

\subsubsection{Second Step}
We now plug the analysis of the covariance matrix performed in \S \ref{subsubse:exp:cov} 
into the previous step: We compute the typical values of the conditional covariance matrix on the event ${\mathcal B} \cap {\mathcal R}$,  
where
\begin{equation}
\label{eq:26:12:3}
{\mathcal R} =
\bigl\{ S_{t}^* \leq \xi^{3/2} \bigr\} \cap
 \bigl\{ R_{t}^* \leq \xi^{9/4} \bigr\},
\end{equation}
so that, by Lemma
\ref{lem:22:11:1},
$\P ({\mathcal R}^{\complement} ) \leq c^{-1} \exp (- c \xi^{3/2}/t)$.
Therefore,
\begin{equation*}
\begin{split}
\P \left( {\mathcal B} \cap {\mathcal R} \right) &\geq 
c^{N(N-1)/2} \gamma^{N-1}
\exp \bigl( - (N-1) \frac{\xi^2}{2t} \bigr) 
\Bigl( 1 \wedge \bigl( \frac{t^{1/2}}{\xi}\bigr) \Bigr)^{N(N-1)/2}
- 
c^{-1} \exp \bigl(- c \frac{ \xi^{3/2}}{t} \bigr),
\end{split}
\end{equation*}
which proves that there exists a constant $C:=C(N) \geq 1$ (which value  is allowed to increase from line to line) such that 
\begin{equation*}
\P \left( {\mathcal B} \cap {\mathcal R} \right) \geq 
C^{-1}\gamma^{N-1}
\exp \bigl( - 2(N-1) \frac{\xi^2}{2t} \bigr) - 
c^{-1} \exp \bigl(- c \frac{ \xi^{3/2}}{t} \bigr),
\end{equation*}
using the fact that $1 \vee (\xi/t^{1/2}) \leq C \exp [\xi^2/(Nt) ]$. Therefore, for $\xi$ small enough, 
\begin{equation}
\label{eq:24:11:10}
\P \left( {\mathcal B} \cap {\mathcal R} \right) \geq 
C^{-1}
\exp \bigl( - 2(N-1) \frac{\xi^2}{2t} \bigr).
\end{equation}

On ${\mathcal B} \cap {\mathcal R}$ (see \eqref{eq:26:12:1} and \eqref{eq:26:12:2} for the definitions of ${\mathcal B}$ and 
\eqref{eq:26:12:3} for the definition of ${\mathcal R}$), we 
have 
\begin{equation}
\label{RANGE_REMAINDER}
S_{t}^* \leq \xi^{3/2}, \ R_{t}^* \leq \xi^{9/4}, \ (S_{t}^{-1,1})^* \leq t, \
(B_{t}^{.,1})^* \leq 2\xi, \ B^{1,i}_{s} B^{1,j}_{s} = (s/t)^2 \xi^2 + O(\gamma t),
\end{equation} 
the last expansion holding true 
for all $i,j \in \{2,\dots,N\}$ and following from the fact that $\gamma^2 t^2/\xi^2 \leq 
\gamma t$. 
We deduce from \eqref{CTR_COV_ENTRIES} that, for all $i,j \in \{2,\dots,N\}$,
on ${\mathcal B} \cap {\mathcal R}$,
\begin{equation*} 
\begin{split}
&(\bar C_t)^{1,1}= t (N-1) \frac{\xi^2}{3} +  O\bigl(t^{2} + t \xi^{9/4} \bigr),
\\
&(\bar C_t)^{i,i}=t \bigl( 1 - \frac{\xi^2}{3} \bigr) +
 O\bigl(t^{2} + t \xi^{9/4} \bigr),
\\
&(\bar C_t)^{1,i} =(\bar C_t)^{i,1}=- t \frac{\xi}{2}
+  O\bigl(\gamma t \xi + t^{2} + t \xi^{9/4} \bigr),
\\
&(\bar C_t)^{i,j}=(\bar C_t)^{j,i}=-t \frac{\xi^2}{3} 
+  O\bigl( t^{2} + t \xi^{9/4} \bigr), \quad i  \not = j,
\end{split}
\end{equation*}
the 
$O(\gamma t)$ in the third expansion following from the fact that 
$\gamma t^2/\xi = \gamma t \xi (t /\xi^2) \leq \gamma t \xi$.
Therefore, 
we can write, on ${\mathcal B} \cap {\mathcal R}$,
\begin{equation}
\label{eq:26:12:10}
\begin{split}
&\bar C_{t} =  \bar{C}_{t}^{0}
+ O \left( t^2+ t \xi^{9/4} \right),
\\
&\bar{C}_{t}^{0}
= 
 t \ \textrm{diag}(\xi,1,\dots,1)  \bar{C}^{00} \textrm{diag}(\xi,1,\dots,1),
\end{split}
\end{equation}
where $\textrm{diag}(\xi,1,\dots,1)$
denotes the diagonal matrix with $(\xi,1,\dots,1)$ as diagonal and where, for every $i \in \{2,\dots,N\}$,
\begin{equation*} 
\begin{split}
&(\bar C^{00})^{1,1}=   \frac{N-1}{3},
\quad (\bar C_t^{00})^{i,i}= \bigl( 1 - \frac{\xi^2}{3} \bigr),
\\
&(\bar C^{00})^{1,i} =(\bar C^{00})^{i,1}=- \frac{1}{2} + {\mathcal O}(\gamma), \quad (\bar C^{00})^{i,j}=(\bar C^{00})^{j,i}=- 
\frac{\xi^2}{3}, \quad i  \not = j. 
\end{split}
\end{equation*}

\subsubsection{Third Step}
We go thoroughly into the analysis of $\bar{C}_{t}^{0}$.
When $\xi=\gamma=0$, the determinant of $\bar{C}^{00}$ can be computed explicitly by adding $1/2$ times the column $i$ to the first column, for any $i =2, \dots,N$.  We obtain as a result 
\begin{equation*}
\left[ {\rm det}(\bar{C}^{00}) \right]_{\vert \xi = \gamma =0} = \frac{N-1}{12}.  
\end{equation*}
We deduce that 
\begin{equation}
\label{eq:27:12:15}
{\rm det}(\bar{C}_{t}^0) = t^N \xi^2 \left[ \frac{N-1}{12} + {\mathcal O}\left(\gamma + \xi^2 \right) \right]. 
\end{equation}
In a similar way, 
\begin{equation}
\label{eq:27:12:1}
\left( \bar{C}_{t}^0 \right)^{-1}= t^{-1} \textrm{diag}(1/\xi,1,\dots,1) \left( \bar{C}^{00} \right)^{-1}\textrm{diag}(1/\xi,1,\dots,1),
\end{equation}
where, for $\gamma$ and $\xi^2$ small enough,
\begin{equation*}
\left( \bar{C}^{00} \right)^{-1} = \left[\left( \bar{C}^{00} \right)^{-1}\right]_{\vert \xi = \gamma = 0} \left( \textrm{Id}_{N} +  {\mathcal O}(\gamma + \xi^2) \right),
\end{equation*}
with
\begin{equation}
\label{eq:28:12:1}
\left[\left( \bar{C}^{00} \right)^{-1}\right]_{\vert \xi = \gamma = 0} = {\mathcal O}(1),
\end{equation}
so that, by \eqref{eq:27:12:1},
$( \bar{C}_{t}^0 )^{-1}= {\mathcal O} ( t^{-1} \xi^{-2} )$. Therefore, referring to \eqref{eq:26:12:10}, we write
\begin{equation}
\label{eq:27:12:16}
\bar C_{t} = \bar{C}_{t}^0 + M_{t},
\end{equation}
with $M_{t} = O(t^{2} + t \xi^{9/4})$ on ${\mathcal B} \cap {\mathcal R}$, and we let
\begin{equation*}
\textrm{Id}_{N} + M_{t}' := \left( \bar{C}_{t}^{0}\right)^{1/2} \left( \bar{C}_{t}^0 + M_{t} \right)^{-1} 
\left( \bar{C}_{t}^{0}\right)^{1/2},  
\end{equation*}
where the exponent $1/2$ indicates the symmetric square root. Indeed, when $\gamma=\xi=0$, $\bar{C}^{00}$ is the  covariance matrix
of the vector 
\begin{equation*}
\biggl( \left( \frac{N-1}{12} \right)^{1/2} \zeta_{1} - \sum_{i=2}^N \frac{\zeta_{i}}{2} ,  \zeta_{2},\dots,
\zeta_{N} \biggr),
\end{equation*}
with $(\zeta_{1},\dots,\zeta_{N}) \overset{({\rm law})}{=} {\mathcal N}^{\otimes N}(0,1)$, so that it is non-negative symmetric matrix; since its determinant is positive, it is a positive symmetric matrix. By continuity, this remains true for $\xi$ and $\gamma$ small enough. For the same values of $\xi$ and $\gamma$, \eqref{eq:26:12:10} says that $\bar{C}_{t}^0$ is also symmetric and positive. Then,
\begin{equation}
\label{eq:23:11:10}
\left( \bar{C}_{t}^0 + M_{t} \right)^{-1} =  \left( \bar{C}_{t}^0 \right)^{-1/2}
\left( \textrm{Id}_{N} + M_{t}' \right) \left( \bar{C}_t^{0} \right)^{-1/2}. 
\end{equation}
As $M_{t}( \bar{C}_{t}^0 )^{-1} = O(t/\xi^2 + \xi^{1/4})$ is small when $t/\xi^2$ and $\xi$ are small, we 
can write 
\begin{equation*}
\begin{split}
\textrm{Id}_{N} + M_{t}' &= \left( \bar{C}_{t}^0 \right)^{1/2} \left( \bar{C}_{t}^0 \right)^{-1}\left[ \textrm{Id}_{N} + M_{t} \left(\bar{C}_{t}^0\right)^{-1} \right]^{-1} \left( \bar{C}_{t}^0 \right)^{1/2}
\\
&=  \left( \bar{C}_{t}^0 \right)^{-1/2} \sum_{n \geq 0} \left[ - M_{t} \left( \bar{C}_{t}^0 \right)^{-1} \right]^n 
\left( \bar{C}_{t}^0 \right)^{1/2}
\\
&= \textrm{Id}_{N} + \sum_{n \geq 0} (-1)^{n+1} \left(\bar{C}_{t}^0\right)^{-1/2} \left[ M_{t}  \left( \bar{C}_{t}^0 \right)^{-1} 
\right]^{n}
M_{t} \left( \bar{C}_{t}^0 \right)^{-1/2}.
\end{split} 
\end{equation*}
By \eqref{eq:27:12:1}, $( \bar{C}_{t}^0)^{-1/2} = O (t^{-1/2}\xi^{-1})$, so that 
$M_{t} ( \bar{C}_{t}^0)^{-1/2}
= O(t^{3/2} \xi^{-1} + t^{1/2} \xi^{5/4})$. Therefore,
\begin{equation*}
\begin{split}
\sum_{n \geq 0} (-1)^{n+1} \left(\bar{C}_{t}^0\right)^{-1/2} \left[ M_{t}  \left( \bar{C}_{t}^0 \right)^{-1} \right]^{
n}
M_{t} \left( \bar{C}_{t}^0 \right)^{-1/2}
&= \sum_{n \geq 0} \left[ O(t/\xi^2 + \xi^{1/4}) \right]^{n+1} 
\\
&= O(t/\xi^2 + \xi^{1/4}),
\end{split}
\end{equation*}
provided $t/\xi^2$ and $\xi$ are small enough. 

Therefore, for $t/\xi^2$ and $\xi$ small enough, the matrix $\textrm{Id}_{N} + M_{t}'$, which is symmetric by construction, has all its eigenvalues between $1/2$ and $2$, so that, for given a vector $v=(v_{1},\dots,v_{N})^{\top}$,
\eqref{eq:23:11:10} yields
\begin{equation}
\label{eq:27:12:10}
\begin{split}
\langle v , \left( \bar{C}_{t}^0 + M_{t} \right)^{-1} v \rangle 
&\leq C \big\langle \bigl(v_{1},v_{2},\dots,v_{N} \bigr)^{\top}, \bigl( \bar{C}_{t}^{0} \bigr)^{-1}
\bigl(v_{1},v_{2},\dots,v_{N} \bigr)^{\top} \big\rangle
\\
&\leq C t^{-1} \Big\langle \Bigl(\frac{v_{1}}{\xi},v_{2},\dots,v_{N} \Bigr)^{\top}, \bigl( 
\bar{C}_{t}^{00} \bigr)^{-1}
\Bigl(\frac{v_{1}}{\xi},v_{2},\dots,v_{N} \Bigr)^{\top} \Big\rangle
\\
&\leq C t^{-1} \biggl( \frac{v_{1}^2}{\xi^2} + \sum_{i=2}^N v_{i}^2 \biggr). 
\end{split}
\end{equation}

\subsubsection{Final Step}
\label{subsubsec:final}
We can summarize what we have proved in the following way: There exists a constant $K:=K(N) \geq 1$ such that, for 
$\max(t/\xi^2,\xi^2,\gamma) \leq 1/K$, Eq. \eqref{eq:27:12:10} holds
for any $(v_{1},\dots,v_{N}) \in \R^N$ on the event ${\mathcal B} \cap {\mathcal R}$. 

The point is now to plug $(v^1 - Z_{t}^{1,1} x_{0}^1,v^2-Z_t^{2,1}x_0^1,\dots,v^N-Z_t^{N,1}x_0^1)$ instead of $(v^1,\dots,v^N)$ in 
\eqref{eq:27:12:10}. Put it differently, we are to bound:
\begin{equation}
\label{EQ_MINO_2}
\inf_{Kt\le \xi^2\le 1/K} I(t,x_0,v,\xi),\quad I(t,x_0,v,\xi):=\Bigl[ \frac{\vert v^1 - Z_t^{1,1} x_{0}^1\vert^2}{\xi^2}+\sum_{i=2}^N \vert v^i-Z_t^{i,1}x_0^1\vert^2
\Bigr].
\end{equation}
By \eqref{eq:2:12:1} and \eqref{RANGE_REMAINDER}, on ${\mathcal B} \cap {\mathcal R}$, 
\begin{equation*}
\begin{split}
&Z_{t}^{1,1} = 1 + O \bigl( t + \xi^{2} \bigr) =  1 + O(\xi^2), 
\\
&Z_t^{i,1}= \bigl(1+{\mathcal O} (t) \bigr) \bigl( \frac{\gamma t}\xi+\xi + t\bigr) = O(\xi),
\end{split}
\end{equation*}
where we have used $t \leq \xi^2$ in both expansions. Pay attention that this step is crucial
as, together with the previous paragraph, it gives the joint behavior of $(Z^{\cdot,1}_{t},\bar{C}_{t})$
on ${\mathcal B} \cap {\mathcal R}$.

Therefore, we can find a constant $C:=C(N) >0$ such that   
\begin{equation}
\label{EQ_MINO_1}
\begin{split}
&\bigl\vert v^1 - Z_{t}^{1,1} x_{0}^1 \bigr\vert \leq 
\bigl\vert v^1 - x_0^1 \bigr\vert + C \xi^2 \vert x_{0}^1 \vert,
\\
&\bigl\vert v^i - Z_{t}^{i,1} x_{0}^1 \bigr\vert \le \vert v^i\vert + C \xi |x_0^1| , \quad i \not = 1.
\end{split}
\end{equation}
The value of $C$ being allowed to increase from line to line, 
we get: 
\begin{equation*}
I(t,x_0,v,\xi)\le C\biggl[ \frac{\vert v^1 - x_0^1\vert^2}{\xi^2}+|x_0^1|^2\xi^2+\sum_{i=2}^N|v^i|^2\biggr].
\end{equation*}
We now handle the minimization problem in \eqref{EQ_MINO_2} according to the value of
\begin{equation*}
\varsigma:= \frac{\vert v^1 - x_{0}^1 \vert}{1 \vee \vert x_{0}^1 \vert} 
\end{equation*}
If
$\varsigma \leq Kt$, we choose
$\xi^2 = Kt$ in the infimum. We obtain 
\begin{equation*}
\inf_{K t \leq \xi^2 \leq 1/K} I(t,x_0,v,\xi)
 \leq  C\biggl(2K t \bigl( 1 \vee \vert x_{0}^1 \vert^2 \bigr) +\sum_{i=2}^N|v^i|^2\biggr). 
\end{equation*}
If  
$\varsigma \geq 1/K$, we choose
$\xi^2 = 1/K$ in the infimum. We obtain 
\begin{equation*}
\inf_{K t \leq \xi^2 \leq 1/K} I(t,x_0,v,\xi) \leq  C
\biggl( 2K \vert v^1 - x_{0}^1 \vert^2+\sum_{i=2}^N|v^i|^2\biggr).
\end{equation*}
If  
$\varsigma \in [Kt, 1/K]$, we choose
$\xi^2 = 
\varsigma
$ in the infimum. We obtain 
\begin{equation*}
\inf_{K t \leq \xi^2 \leq 1/K} I(t,x_0,v,\xi)\leq  C \biggl( 2 \bigl(1 \vee \vert x_{0}^1 \vert \bigr) 
\vert v^1 - x_{0}^1 \vert + \sum_{i=2}^N|v^i|^2\biggr). 
\end{equation*}
This gives a lower bound for the exponential factor in \eqref{eq:27:12:10}
on the event ${\mathcal B} \cap {\mathcal R}$. 
When $x_0\in[-C_0,C_0]$, we can modify $C$ (allowing it to depend on $C_{0}$) in such a way that, in 
any of three cases,
\begin{equation}
\label{EQ_MINO_3}
\inf_{K t \leq \xi^2 \leq 1/K} I(t,x_0,v,\xi)\leq  C \biggl(  
\vert v^1 - x_{0}^1 \vert + \vert v^1 - x_{0}^1 \vert^2 + \sum_{i=2}^N|v^i|^2\biggr), 
\end{equation}
which fits the off-diagonal cost in the statement of Theorem \ref{thm:bd:degenerate}. Notice that the dependence 
of $C$ upon $C_{0}$ can be made explicit.

It remains to discuss the diagonal rate. By \eqref{eq:27:12:15} and \eqref{eq:23:11:10},
on ${\mathcal B} \cap {\mathcal R}$,
\begin{equation*}
{\rm det}(\bar{C}_{t})  = {\rm det}(\bar{C}_{t}^0 + M) \leq C' t^N \xi^2 = C' t^{N+1} (\xi^2/t) ,
\end{equation*}
for some constant $C'$. Now, 
\begin{equation*}
\frac{\xi^2}{t} 
\left\{
\begin{array}{ll}
\displaystyle  = K &\quad \textrm{if} \ 
\varsigma \leq Kt,
\vspace{5pt}
\\
\displaystyle  = \frac{\varsigma}t
 \leq \exp \Bigl( \frac{\vert v^1 - x_{0}^1 \vert}{ t} \Bigr)
&\quad \textrm{if} \  
\varsigma \in [Kt,1/K],
\vspace{5pt}
\\
\displaystyle= \frac{1}{Kt} \leq \frac{\varsigma}{t}
\leq  \exp \Bigl(  \frac{\vert v^1 - x_{0}^1 \vert}{ t}
 \Bigr)
&\quad \textrm{if} \ 
 \varsigma \geq 1/K. 
\end{array}
\right.
\end{equation*}
Therefore, modifying $C'$ if necessary,
\begin{equation}
\label{eq:27:12:21}
\left[ {\rm det}(\bar C_{t}) \right]^{-1/2} \geq (C')^{-1/2} t^{-(N+1)/2} 
\exp \Bigl( - \frac{\vert v^1 - x_{0}^1 \vert}{ t}
 \Bigr).
\end{equation}
In the same way, \eqref{eq:24:11:10} implies
\begin{equation}
\label{eq:27:12:22}
\P \left( {\mathcal B} \cap {\mathcal R}\right) \geq 
(C')^{-1}
\exp \Bigl( - C' \frac{\vert v^1 - x_{0}^1 \vert}{ t}
\Bigr).
\end{equation}
By \eqref{REP_DENS_DEG_APRES_ID_EN_LOI}, 
\eqref{eq:27:12:10}, \eqref{EQ_MINO_3}, \eqref{eq:27:12:21} and \eqref{eq:27:12:22}, we 
complete the proof of the lower bound. Indeed, for 
$x_0\in[-C_0,C_0]$ and $C:=C(N,C_{0})$,
\begin{equation*}
f_{x_0}(t,v) 
\geq \frac{1}{C t^{(N+1)/2}} \exp\biggl(- C \biggl[ 
\frac{\vert v^1 - x_{0}^1 \vert}{t}+ 
\frac{ \vert v^1 - x_{0}^1 \vert^2}{t} 
+ \sum_{i=2}^{N}\frac{|v^i|^2}t\biggr] \biggr).
\end{equation*}
\color{black}
%
%
%

\subsection{Proof of the Upper Bound in Theorem \ref{thm:bd:degenerate}}
Let us restart from the expression of the conditional density given by Proposition \ref{exp} that we recall here. For all $(t,x_0,v)\in \R^{+*}\times (\R^N)^2 $ we have:
\begin{equation*}
f_{x_{0}}(t,v)=\E\left[ \frac{1}{(2\pi)^{N/2}{\rm det}(C_t)^{1/2}}\exp\left(-\tfrac 12\langle C_t^{-1}(v-Z_tx_0),v-Z_tx_0\rangle \right)\right]. 
\end{equation*}

In order to handle the degeneracy in the first coordinate, we introduce the \textit{rescaled} covariance matrix
(pay attention that the notation $M$ below has nothing to do with the one used in
\eqref{eq:27:12:16})
\begin{equation*}
{M}_{t} := t^{-1} \T_{t}^{-1} {C}_{t} \T_{t}^{-1},  
\end{equation*}
where $\T_t$ is the $N \times N$-diagonal matrix:
\begin{equation}
\label{PROC_SCALED}
\T_t:= {\rm diag}(t^{1/2},1,\dots,1),
\end{equation} 
the matrix $t^{1/2} \T_{t}$ expressing the different scales in the fluctuations of the system, as emphasized in 
\eqref{eq:23:11:3}. 
Writing 
${C}_{t} = t^{1/2} \T_{t} {M}_{t}(t^{1/2} \T_{t})$
in the definition of $f_{x_0}(t,v)$, we deduce that 
\begin{equation}
\label{EXPR_P_BAR}
\begin{split}
f_{x_{0}}(t,v)  
&=\E\biggl[ 
\frac{1}{(2\pi)^{N/2}{\rm det}( M_t)^{1/2}t^{(N+1)/2}}
\\
&\hspace{15pt} \times
\exp\left(-\tfrac 12 \bigl\langle  M_t^{-1} \bigl[t^{-1/2}\T_t^{-1}( v-Z_t x_0)\bigr],t^{-1/2}\T_t^{-1}(v-Z_tx_0)
\bigr\rangle \right)
\biggr].
\end{split}
\end{equation}
This representation makes the explosion rate of the density along the diagonal appear, provided the determinant of the
 matrix ${M}_{t}$ is well-controlled as $t$ tends to $0$. In order to get the off-diagonal decay of the density, 
 we have in mind to perform a Gaussian integration by parts, in its most direct version, in order to bound the density by the tails of 
 the marginal distributions of the process 
 \begin{equation*}
 Z_t\Gamma_t,\ \Gamma_t:=\int_0 ^t Z_s^{\top}d\bar B_s.
 \end{equation*}
 Such a strategy is inspired from the approach based on Malliavin calculus for estimating 
  densities,  see e.g. Kusuoka and Stroock \cite{Kusuoka:87}, but here we take benefit of the underlying Gaussian structure to make the integration by parts directly and thus avoid any further reference to Malliavin calculus. 

 \subsubsection{Main step}
We now establish the upper bound of Theorem \ref{thm:24:12:1} for $f_{x_{0}}(t,.)$ in \eqref{EXPR_P_BAR}.
Rewrite first
\begin{equation}
\label{EXPR_DENS_RESCA}
f_{x_{0}}(t,v)=\frac{1}{t^{(N+1)/2}}\E\bigl[ p_{t}\bigl(t^{-1/2}\T_t ^{-1}( v-Z_tx_0)\bigr)\bigr],
\end{equation}
where 
\begin{equation*}
p_{t}(y) :=  
\frac{1}{(2\pi)^{N/2}{\rm det}( M_t)^{1/2}}
\exp\left(-\tfrac 12 \langle  M_t^{-1} y,y\rangle \right), \quad y \in \R^N,
\end{equation*}
stands for the conditional density at time $t$ of $t^{-1/2}\T_t^{-1}Z_t\Gamma_t $ 
given the $\sigma$-field $\F_{t}^Z :=
\sigma( (Z_u)_{0 \leq u \leq t})$ (pay attention that ${M}_{t}$ is random). Since $p_{t}$ is smooth, we directly have
\begin{equation} 
\begin{split}
 p_{t}(y)
 &=(-1)^N\int_{\prod_{i=1}^N \{\textrm{sign}(y_{i}) z_i>|y_i| \}}\partial_{z_1,\cdots, z_N} p(t,z)dz\\
&= 
\frac{(-1)^N}{(2\pi)^{N/2}{\rm det}( M_t)^{1/2}}
\int_{\prod_{i=1}^N \{\textrm{sign}(y_{i}) z_i>|y_i| \}} 
\partial_{z_1,\cdots,z_N} \Bigl\{
\exp\left(-\tfrac{1}{2}\langle  M_t^{-1}z,z\rangle \right) \Bigr\} dz.
\label{eq:ipp}
\end{split}
\end{equation}
Let now, for any $1 \leq i \leq N$,
\begin{eqnarray*}
P_t^i(z):=\left(\partial_{z_i,\cdots, z_1} \left\{\exp\left(-\tfrac{1}{2}\langle  M_t^{-1}z,z\rangle \right)\right\}\right)\exp\left(\tfrac{1}{2}\langle  M_t^{-1}z,z\rangle\right), \quad z \in \R^N,
\end{eqnarray*}
which is a polynomial of the variable $z$ with degree $i$. Similarly to the Hermite polynomials, it can be defined by induction 
\begin{equation} 
\label{DEF_PI}
\begin{split}
&\forall z\in \R^N,\ P_t^1 (z)=-( M_t^{-1}z)_1,\\
&\forall i\in \{2,\cdots,N\}, \forall z\in \R^N,\  P_t^i(z)=\partial_{z_i}P_{t}^{i-1}(z)- ( M_t^{-1}z)_iP_t^{i-1}(z).
\end{split}
 \end{equation} 
The highest order term in $P_t^i(z) $ writes $(-1)^i \prod_{j=1}^i ( M_t^{-1}z)_j$. Moreover, if $N$ is odd (resp. even), there are only contributions of odd (resp. even) degrees of $z$ in $P_t^N(z)$.
 
In particular, we can compute, for any $z \in \R^N$,
\begin{eqnarray*}
P_t^2(z)&=&\prod_{i=1}^{2}( M_t^{-1}z)_i-( M_t^{-1})_{1,2},\\
P_t^3(z)&=&-\prod_{i=1}^3( M_t^{-1}z)_i+
\sum_{\sigma \in {\mathfrak{S}}_{3}}
(  M_t^{-1})_{\sigma(1),\sigma(2)}( M_t^{-1}z)_{\sigma(3)},
\end{eqnarray*}
where ${\mathfrak{S}}_{3}$ is the symmetric group on $\{1,2,3\}$. 
More generally, for any $1 \leq i \leq N$, we can find a polynomial function 
${\mathscr P}^i$ on $\R^i \times \R^{i(i-1)/2}$ such that 
\begin{eqnarray*}
P_t^i(z)&={\mathscr P}^i \Bigl( \bigl(({M}_{t}^{-1} z)_{j}\bigr)_{1 \leq j \leq i},
\bigl(({M}_{t}^{-1})_{j,k} \bigr)_{1 \leq j < k \leq i} \Bigr). 
\end{eqnarray*}
The family ${\mathscr P}^1, \dots, {\mathscr P}^N$ can be defined by induction by means of 
\eqref{DEF_PI}:
\begin{equation*}
\begin{split}
{\mathscr P}^{i}\bigl( (\zeta_{j})_{1 \leq j \leq i}, (\vartheta_{j,k})_{1 \leq j < k \leq i}
\bigr) &= \sum_{\ell=1}^{i-1} \vartheta_{\ell,i}
\partial_{\zeta_{\ell}}
{\mathscr P}^{i-1}\bigl( (\zeta_{j})_{1 \leq j \leq i}, (\vartheta_{j,k})_{1 \leq j < k \leq i-1}
\bigr) 
\\
&\hspace{15pt} - \zeta_{i} {\mathscr P}^{i-1}\bigl( (\zeta_{j})_{1 \leq j \leq i}, (\vartheta_{j,k})_{1 \leq j < k \leq i-1}
\bigr).
\end{split}
\end{equation*}

Denoting by $ M_t^{-1/2}$ the symmetric square root of $ M_t^{-1}$, we can express both ${M}^{-1}_{t} 
z$ and ${M}^{-1}_{t}$ in terms of quadratic combinations of 
${M}^{-1/2}_{t} 
z$ and ${M}^{-1/2}_{t}$. Therefore, we can find a polynomial function ${\mathscr Q}^N$ on $\R^N \times \R^{N^2}$
such that 
\begin{eqnarray*}
P_t^N(z)&={\mathscr Q}^N \Bigl( \bigl(({M}_{t}^{-1/2} z)_{j}\bigr)_{1 \leq j \leq N},
\bigl(({M}_{t}^{-1/2})_{j,k} \bigr)_{1 \leq j, k \leq N} \Bigr). 
\end{eqnarray*}
Then, 
\begin{equation*}
\begin{split}
&\partial_{z_1,\cdots,z_N} \Bigl\{
\exp\left(-\tfrac{1}{2}\langle  M_t^{-1}z,z\rangle \right) \Bigr\}
\\
&= P_t^N(z)\exp\left(-\tfrac{1}{2}\vert  M_t^{-1/2}z \vert^2 \right)
\\
&={\mathscr Q}^N \Bigl( \bigl(({M}_{t}^{-1/2} z)_{j}\bigr)_{1 \leq j \leq N},
\bigl(({M}_{t}^{-1/2})_{j,k} \bigr)_{1 \leq j, k \leq N} \Bigr)\exp\left(-\tfrac{1}{2}\vert  M_t^{-1/2}z \vert^2 \right), 
\end{split}
\end{equation*}
which permits to `absorb' the polynomial terms in $\bigl(({M}_{t}^{-1/2} z)_{j}\bigr)_{1 \leq j \leq N}$ inside the 
exponential. There exists a constant $c:=c(N)\in (0,1] $ such that 
\begin{equation} 
\label{BOUND_JT}
\left| \partial_{z_1,\cdots, z_N} \left\{\exp\left(-\tfrac{1}{2}\langle  M_t^{-1}z,z\rangle \right)\right\}\right|\le J_t(N) \exp\left(-c| M_t^{-1/2}z|^2 \right),
\end{equation}
where $J_{t}(N)$ reads
\begin{equation}
\label{BOUND_JTb}
J_{t}(N) :=
\Bigl\vert {\mathscr R}^N \Bigl(
\bigl(({M}_{t}^{-1/2})_{j,k} \bigr)_{1 \leq j, k \leq N} \Bigr) \Bigr\vert,
\end{equation}
for a polynomial function ${\mathscr R}^N$ on $\R^{N^2}$. 
Plugging \eqref{BOUND_JT} into \eqref{eq:ipp} we obtain:
\begin{eqnarray*}
 p_{t}(y)&\le& \frac{J_t(N)}{c^{N/2}} \int_{\prod_{i=1}^N \{\textrm{sign}(y_{i})z_i>|y_i| \}} \exp\left(
  - \frac{c}{2} | M_t^{-1/2}z|^2\right)\frac{c^{N/2} \det( M_t^{-1/2})dz}{(2\pi)^{N/2}}.
\end{eqnarray*}
The covariance matrix $M_{t}$ being given, 
the integral in the right-hand side can be interpreted as 
the probability that an $N$-dimensional centered Gaussian random vector 
with $c^{-1} M_{t}$ as covariance matrix be in the set $\{ z \in \R^N : 
\textrm{sign}(y_{i})z_i>|y_i|, \ i = 1,\dots,N \}.$
Conditionally on ${\mathcal F}_{t}^Z$, we know that $c^{-1/2} 
\T_{t}^{-1} Z_{t} \Gamma_{t}$ is precisely a centered Gaussian vector
with $c^{-1} M_{t}$ as covariance. Therefore,
choosing $y=t^{-1/2}\T_t^{-1}( v-Z_tx_0)$, we deduce from
\eqref{EXPR_DENS_RESCA}:
\begin{equation*}
\begin{split}
&f_{x_0}(t,v) \le \frac{1}{t^{(N+1)/2}c^{N/2}}
\\
&\hspace{15pt} \times
\E\biggl[J_t(N) \P\biggl( \bigcap_{i=1}^N \Bigl\{ 
\bigl\vert \bigl(t^{-1/2}\T_t^{-1}Z_t\Gamma_t\bigr)^i \bigr\vert >c^{1/2}\bigl | \bigl(t^{-1/2}\T_t^{-1}(v-Z_tx_0)\bigr)^i\bigr| \Bigr\} \big|{\F}_t^Z\biggr) \biggr].
\end{split}
\end{equation*}
Since the matrix $\T_{t}$ is diagonal, Cauchy-Schwarz inequality gives:
\begin{equation}
\label{MAJ_DENS_PRELIM}
\begin{split}
f_{x_0}(t,v)&\le \frac{\E[J_t(N)^2]^{1/2}}{t^{(N+1)/2}c^{N/2}}\E\biggl[\P \biggl(\bigcap_{i=1}^N \bigl\{ 
\vert (Z_t\Gamma_t)^i \vert>c^{1/2}\bigl | (v-Z_tx_0)^i\bigr| \bigr\}|{\F}_t^Z\biggr)^2\biggr]^{1/2}\\
        &\le \frac{\E[J_t(N)^2]^{1/2}}{t^{(N+1)/2}c^{N/2}}  
        \P \biggl( \bigcap_{i=1}^N \bigl\{ \vert (Z_t\Gamma_t)^i \vert>c^{1/2}\bigl | (v-Z_t x_0)^i\bigr| \bigr\}\biggr)^{1/2}.
\end{split}
\end{equation}
Formula \eqref{MAJ_DENS_PRELIM} is the starting point for the upper bound that follows from the next two Lemmas.

\begin{lemma}[Diagonal Controls]
\label{LEMME_COV}
Given $T>0$, there exists a constant $C:=C(N,T)$ such that, 
for all $t \in [0,T]$, $$\E[J_t(N)^2]^{1/2}\le C.$$
\end{lemma}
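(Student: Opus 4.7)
The plan is to reduce the question to a uniform moment bound on $\lambda_{\min}(M_t)^{-1}$. Since ${\mathscr R}^N$ is, by construction, a polynomial in the entries of the symmetric matrix $M_t^{-1/2}$ of degree bounded only in terms of $N$ (a fact easily verified inductively from \eqref{DEF_PI} after rewriting $M_t^{-1}=(M_t^{-1/2})^2$), one has $J_t(N) \leq C_N(1+\|M_t^{-1/2}\|_{\mathrm{op}}^{m})$ for some integer $m=m(N)$, hence $J_t(N)^2 \leq C_N(1+\lambda_{\min}(M_t)^{-m})$. Invoking the elementary inequality $\lambda_{\min}(M_t)\geq \det(M_t)/\|M_t\|_{\mathrm{op}}^{N-1}$ together with Cauchy--Schwarz, the desired bound reduces to establishing uniform moment bounds for $\|M_t\|_{\mathrm{op}}$ and for $(\det M_t)^{-1}$ on $t \in (0,T]$.

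The first bound is the easy part. The entries of $M_t = t^{-1}\T_t^{-1}\bar C_t \T_t^{-1}$ are given explicitly by \eqref{CTR_COV_ENTRIES}; the Brownian scaling $B_s^{j,1}=t^{1/2}\tilde\beta^j_{s/t}$ expresses each principal term as a polynomial functional of independent standard Brownian motions on $[0,1]$, hence as random variables with Gaussian moments of all orders; the remainders involving $S_t^*, R_t^*$ have moments of all orders by the sub-exponential tail estimates of Lemma~\ref{lem:22:11:1}. All in all, $\E[\|M_t\|_{\mathrm{op}}^p] \leq C_{p,T}$ for every $p\geq 1$ and $t \in (0,T]$.

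The heart of the argument is the lower bound on $\det M_t$. The same Brownian scaling, fed into the expansions of \S\ref{subsubse:exp:cov}, yields $M_t = \tilde M^0 + E_t$ where
\begin{equation*}
\tilde M^0 = \begin{pmatrix} \Sigma + \tfrac{3(N-1)}{2} & -\mu^\top \\ -\mu & {\rm Id}_{N-1} \end{pmatrix},\ \ \Sigma := \sum_{j=2}^N \int_0^1 (\tilde\beta^j_u)^2\,du,\ \ \mu := \Bigl(\int_0^1 \tilde\beta^j_u\,du\Bigr)_{j=2}^N,
\end{equation*}
and $\E[\|E_t\|_{\mathrm{op}}^p]^{1/p}=O(t^{1/2})$ for every $p \geq 1$. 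The decisive point is that a Schur complement expansion along the first row gives
\begin{equation*}
\det \tilde M^0 = \Sigma + \tfrac{3(N-1)}{2} - |\mu|^2 = \tfrac{3(N-1)}{2} + \sum_{j=2}^N \int_0^1 \Bigl(\tilde\beta^j_u - \int_0^1 \tilde\beta^j_v\,dv\Bigr)^2 du \geq \tfrac{3(N-1)}{2},
\end{equation*}
the remaining sum being non-negative as an average of squared deviations. Thus $\det \tilde M^0$ is \emph{deterministically} bounded below by the positive constant $\tfrac{3(N-1)}{2}$, a constant originating precisely from the drift term in $(\bar C_t)^{1,1}$ isolated in \eqref{CTR_COV_ENTRIES}.

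The endgame is a perturbation argument. Since the determinant is polynomial in the entries of a matrix, combining the estimates of the last two paragraphs gives $\E[|\det M_t - \det \tilde M^0|^p] \leq C_p t^{p/2}$ for every $p \geq 1$. Markov's inequality then provides $\P(\det M_t < \tfrac{3(N-1)}{4}) \leq C_p t^{p/2}$ for every $p$. On the complement event, $(\det M_t)^{-1}$ is bounded by $4/(3(N-1))$; on the exceptional event one relies on the crude deterministic a priori bound $\det M_t \geq \lambda_{\min}(M_t)^N \geq (c_N t/2)^N$ valid for $t \in (0,1]$, which follows from $\lambda_{\min}(\Lambda_{s'})\geq c_N s'$, the isometry $|Z_s^\top\eta|=|\eta|$, and $|\T_t^{-1}\xi|^2 \geq 1$ on the unit sphere for $t\leq 1$. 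The exceptional contribution to $\E[(\det M_t)^{-q}]$ is then bounded by $C t^{-Nq} \cdot C_p t^{p/2}$, which stays uniformly bounded as soon as $p \geq 2Nq$, with $p$ free to choose. The range $t \in [1,T]$ is treated separately through the uniform deterministic bound $\lambda_{\min}(M_t)\geq c_N/(2T)$ on that compact subinterval. The main obstacle throughout is the identification of the constant $\tfrac{3(N-1)}{2}$ in $\det \tilde M^0$: without it, the determinant would be a random variable only a.s.\ positive, and no uniform negative moment bound could hold.
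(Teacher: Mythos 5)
Your proposal is correct but proceeds along a genuinely different — and considerably more laborious — path than the paper's. The paper's argument rests on Proposition \ref{PROP_CTR_DET}, which establishes the \emph{deterministic}, almost-sure bound $\det(M_t)^{-1}\le C(N,T)$: it exploits the variational characterisation of the determinant of a symmetric positive matrix (Lemma \ref{LEMME_DET_VAR}), the isometry of $Z_s$ (so $\det(Z_s^\top D_s Z_s)=\det(D_s)$) and the explicit eigenvalues of $D_s$ coming from \eqref{barb2b}, to get $\det(\bar C_t)\ge C t^{N+1}$ for \emph{every} realisation of the driving noise, with no probability involved. Combined with the $O_\P(1)$ control \eqref{M_O_P1} of the entries of $M_t$, this yields $M_t^{-1/2}=O_\P(1)$ and hence the moment bound on $J_t(N)$ in one step. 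Your route instead extracts the scaled leading-order matrix $\tilde M^0$, identifies the \emph{drift-generated} constant $\tfrac{3(N-1)}{2}$ in its Schur complement to get a deterministic lower bound on $\det\tilde M^0$, and then closes the argument by perturbation: an $L^p$-estimate $\E[|\det M_t-\det\tilde M^0|^p]\le C_p t^{p/2}$, a Markov bound on the exceptional event, and the crude a.s.\ bound $\det M_t\ge(c_N t/2)^N$ (correct, but degenerating as $t\to0$) to absorb the exceptional contribution. This is a valid and self-contained alternative; what you gain is an explicit identification of where the lower ellipticity constant ``comes from'' (the drift term in $(\bar C_t)^{1,1}$), what you lose is the cleanliness of the paper's one-shot almost-sure argument, which avoids both Brownian scaling and any balancing of Markov exponents against the degeneration rate $t^{-Nq}$. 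One small remark: the deterministic bound $\lambda_{\min}(\Lambda_{s'})\ge c_N s'$ and the isometry of $Z_s$ that you invoke on the exceptional event is exactly the same geometric input that the paper feeds into the variational formula; pushing that input all the way through Lemma \ref{LEMME_DET_VAR} rather than stopping at the crude $\lambda_{\min}$ bound is what lets the paper skip the perturbation step entirely.
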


\begin{lemma}[Tail estimates]
\label{LEMME_QUEUES}
Given $T>0$,
there exists a constant $C:=C(N,T)\ge 1$ such that for all $t \in [0,T]$ and $v\in \R^N$:
\begin{equation*}
\begin{split}
&\P \biggl( \bigcap_{i=1}^N \bigl\{ \vert (Z_t\Gamma_t)^i \vert >c^{1/2}\bigl | ( v-Z_tx_0)^i\bigr| \bigr\} \biggr)
\\
&\hspace{15pt} \le C\E\biggl[\exp\biggl( - \frac{C^{-1}}t  \biggl\{ \vert ( v-Z_tx_0)^1 \vert
 + |( v-Z_tx_0)^1|^2 
+\sum_{i=2}^N \vert ( v-Z_tx_0)^i \vert^2\biggr\}
\biggr)\biggr]^{1/2}.
\end{split}
\end{equation*}
\end{lemma}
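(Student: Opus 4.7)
My plan is to condition on the $\sigma$-algebra $\F_t^Z$ generated by the path $(Z_s)_{0\le s\le t}$. By the independence of $Z$ and $\bar B$ from Lemma \ref{l4}, the conditional law of $Z_t\Gamma_t$ given $\F_t^Z$ is centered Gaussian with covariance $C_t$; the identity in law \eqref{ID_LAW_MIN} lets me equivalently work with $\bar C_t$ from \eqref{eq:27:12:6:bis}. Denoting $w:=v-Z_t x_0$, which is $\F_t^Z$-measurable, the event in the lemma becomes a tail event for the coordinates of a conditional centered Gaussian vector.

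For the conditional intersection probability, I combine H\"older's inequality
\begin{equation*}
\P\Bigl(\bigcap_{i=1}^N A_i\Bigm|\F_t^Z\Bigr)\le \prod_{i=1}^N\P(A_i\mid\F_t^Z)^{1/N}
\end{equation*}
with the standard Gaussian tail $\P(A_i\mid\F_t^Z)\le 2\exp\bigl(-c(w^i)^2/(2(\bar C_t)^{ii})\bigr)$, producing a bound of the form $C\exp\bigl(-c\sum_i(w^i)^2/(N(\bar C_t)^{ii})\bigr)$. By Proposition \ref{prop:24:7:1} and the orthogonality of $Z_s$, one has the deterministic bound $(\bar C_t)^{ii}\le Ct$ for all $i$. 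Inserting this pathwise yields the Gaussian contributions $(w^1)^2/t$ and $\sum_{i\ge 2}(w^i)^2/t$ in the exponent, and after taking expectation gives an estimate of the form $C\E\bigl[\exp(-c\{(w^1)^2/t+\sum_{i\ge 2}(w^i)^2/t\})\bigr]$.

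The delicate point is the additional exponential contribution $|w^1|/t$ in the first coordinate, which cannot come from a pathwise bound on $(\bar C_t)^{11}$ but must be extracted from the large-deviations structure of the path $(B_s)_{s\le t}$. Following the decomposition of \S\ref{SD_EST}, I partition the sample space according to the scale $a:=(B_t^{\cdot,1})^*$. By the expansion \eqref{CTR_COV_ENTRIES}, one has $(\bar C_t)^{11}\lesssim ta^2$ on $\{a\le\xi\}$ for $\xi^2\ge t$, which combined with Bernstein's inequality (Proposition \ref{BERNSTEIN}) yields, for each dyadic scale $\xi$, a contribution of order
\begin{equation*}
\exp\bigl(-\xi^2/(2t)-c(w^1)^2/(t\xi^2)\bigr).
\end{equation*}
Optimizing over $\xi$ exactly as in \S\ref{subsubsec:final} (with $\xi^2\sim|w^1|$ for $|w^1|\le 1$ and $\xi^2\sim 1$ for $|w^1|\ge 1$) reproduces the mixture $|w^1|/t$ in the exponential regime and recovers the $(w^1)^2/t$ cost in the Gaussian regime.

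Combining both contributions yields $\P(\bigcap_i A_i)\le C\E\bigl[\exp(-K(w(Z_t))/(2t))\bigr]$ with $K(w)=c^{-1}\{|w^1|+(w^1)^2+\sum_{i\ge 2}(w^i)^2\}$. A final Cauchy--Schwarz at the level of the outer expectation,
\begin{equation*}
\E[\exp(-K/(2t))]\le\E[\exp(-K/t)]^{1/2},
\end{equation*}
delivers the form stated in the lemma after absorbing constants. The main technical obstacle is the large-deviations integration in the third paragraph: although the scale trade-off already appeared in the lower-bound argument, here one must perform the full integration against the law of $B$ and carefully keep track of the interplay between the Bernstein probability of the event $\{a\sim\xi\}$ and the size of $(\bar C_t)^{11}$ to recover both regimes of the mixture within a single unified bound.
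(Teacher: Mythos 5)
Your high-level strategy is sound: condition on the $Z$-path, use the conditional Gaussian structure to get the quadratic costs $(w^i)^2/t$, and extract the additional linear cost $|w^1|/t$ from the small-time degeneracy of the first diagonal entry of $\bar C_t$ (here $w:=v-Z_tx_0$). But the mechanism you use to extract that linear term is genuinely different from the paper's, and as stated it has a real gap.

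The paper does not slice on dyadic scales of $a=(B_t^{\cdot,1})^*$. It instead applies exponential Chebychev with a small but \emph{fixed} $\gamma$ to the first coordinate,
$\mathbf 1_{\{(Z_t\Gamma_t)^1>c^{1/2}|w^1|\}}\le \exp\bigl(-\gamma c^{1/2}|w^1|/t\bigr)\exp\bigl(\gamma (Z_t\Gamma_t)^1/t\bigr)$,
then Cauchy--Schwarz to separate the remaining indicator (handled by Bernstein, giving the quadratic costs in all coordinates) from the moment generating function of $(Z_t\Gamma_t)^1$ given $\mathcal F_t^B$, namely $\exp\bigl(2\gamma^2\bar C_t^{1,1}/t^2\bigr)$. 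The whole burden is then to show $\E\bigl[\exp\bigl(2\gamma^2\bar C_t^{1,1}/t^2\bigr)\bigr]$ is bounded. The paper does \emph{not} use \eqref{CTR_COV_ENTRIES} for this; it re-derives a sharper expansion tailored to this step, $\bar C_t^{1,1}=t\,O(S_t^*)+O(t^2)$, using the factorization $1-(Z_s^{1,1})^2=(1-Z_s^{1,1})(1+Z_s^{1,1})$ together with $|Z_s^{1,1}|\le1$ (so only the first-order remainder $S_s^{1,1}$ appears). Since by Lemma~\ref{lem:22:11:1} the variable $S_t^*/t$ has a genuine exponential tail, this exponential moment closes for $\gamma$ small. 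This is where the large-deviation input is absorbed, once and for all, without any scale decomposition.

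Your route invokes \eqref{CTR_COV_ENTRIES} to claim $(\bar C_t)^{11}\lesssim ta^2$ on $\{a\le\xi\}$. That expansion, however, carries remainder terms of order $t\bigl((B_t^{\cdot,1})^*\bigr)^4$, $tR_t^*$ and $t(R_t^*)^2$, which are not dominated by $ta^2$ unless you also restrict the remainders; and by Lemma~\ref{lem:22:11:1}, $R_t^*$ has only a stretched-exponential tail $\exp(-y^{2/3}/(Ct))$, so the exponential moments that a Chebychev or slicing argument needs (e.g.\ $\E[\exp(\gamma R_t^*/t)]$) diverge. If you instead restrict to $\{R_t^*\lesssim a^4\}$, the probability of the complement, $\exp(-a^{8/3}/(Ct))$, decays \emph{slower} in the relevant regime $a\le 1$ than the Bernstein cost $\exp(-a^2/(Ct))$ of the slice, so the two regimes do not match up. There is also a secondary issue you only gesture at: since $a$ and $w^1$ both depend on the same path $B$, the optimization over the dyadic scale $\xi$ must be carried out inside the expectation over $Z_t$ appearing in the lemma's right-hand side; one cannot fix $\xi$ outside and conclude. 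The cleanest repair is to abandon \eqref{CTR_COV_ENTRIES}, re-derive $\bar C_t^{1,1}=t\,O(S_t^*)+O(t^2)$ as the paper does, and replace the slicing by the exponential-Chebychev/MGF argument, which avoids both difficulties at once.
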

The proofs of Lemmas \ref{LEMME_COV} and \ref{LEMME_QUEUES} are given in the subsections \ref{SEC_PR_DIAG} and \ref{SEC_PR_QUEUES} respectively.

\subsubsection{Derivation of the diagonal controls}
\label{SEC_PR_DIAG}
This subsection is dedicated to the proof of Lemma \ref{LEMME_COV}. Usually, in the Malliavin calculus approach to density estimates, this step is the most involved and requires a precise control of the determinant of the Malliavin covariance matrix, see e.g. Kusuoka and Stroock \cite{kusu:stro:85} or Bally \cite{ball:90}. In the current framework the determinant of the `covariance' matrix $ M_t$ still plays a key role but the specific structure of that matrix, especially the fact that $(Z_s)_{s\ge 0}$ 
defines an isometry, yields the required estimate almost for free.

Precisely, we have the following Proposition.

\begin{prop}[Control of the determinant of the covariance]
\label{PROP_CTR_DET}
For a given $T>0$,
there exists $C:=C(N,T)$ such that, 
for all $t \in [0,T]$, almost surely,
$${\rm det}( M_t)^{-1}\le C.$$
\end{prop}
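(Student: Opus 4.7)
The plan is to bypass any probabilistic analysis by exploiting Minkowski's determinant inequality, which turns this almost-sure lower bound into a purely deterministic calculation on $\Lambda_s$. Writing $C_t = \int_0^t N_s\, ds$ with integrand $N_s := (Z_tZ_s^\top) \Lambda_s (Z_tZ_s^\top)^\top$, each $N_s$ is positive semidefinite, so the integral form of Minkowski's inequality (a consequence of the concavity of $A \mapsto (\det A)^{1/N}$ on the cone of positive symmetric matrices) gives
\begin{equation*}
\bigl( \det C_t \bigr)^{1/N} \ge \int_0^t \bigl( \det N_s \bigr)^{1/N} ds.
\end{equation*}
Crucially, $Z_tZ_s^\top \in \SOR{N}$ has determinant $1$, so $\det N_s = \det \Lambda_s$, which is deterministic and independent of the Brownian motion on the Lie algebra.

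Next, I would use the explicit form \eqref{barb2b} of $\Lambda_s$: it is diagonal in the canonical basis with eigenvalue $\tfrac{N-1}{N}(1-e^{-2Ns})$ along $e_1$ and eigenvalue $\tfrac{N-1+e^{-2Ns}}{N}$ with multiplicity $N-1$ on $e_1^\perp$, so
\begin{equation*}
\det \Lambda_s = \Bigl( \frac{N-1+e^{-2Ns}}{N}\Bigr)^{N-1} \cdot \frac{N-1}{N}\bigl(1-e^{-2Ns}\bigr),
\end{equation*}
which for $s\in [0,T]$ is bounded below by $c\,s$ for some $c=c(N,T)>0$, using $1-e^{-2Ns}\sim 2Ns$ near $0$ and the first factor being uniformly positive on $[0,T]$.

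Plugging this lower bound back and integrating,
\begin{equation*}
\bigl( \det C_t\bigr)^{1/N} \ge \int_0^t (c s)^{1/N} ds = \frac{c^{1/N} N}{N+1}\, t^{(N+1)/N},
\end{equation*}
so $\det C_t \ge c'\, t^{N+1}$ pointwise in $\omega$. Since $\det\T_t^{-1}=t^{-1/2}$ yields $\det M_t = t^{-(N+1)}\det C_t$, this translates immediately into $\det(M_t)^{-1}\le (c')^{-1}$ uniformly in $\omega$ and in $t\in (0,T]$.

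The only mildly delicate point is the integral form of Minkowski's determinant inequality for the positive-semidefinite-valued integrand $s \mapsto N_s(\omega)$; this is standard but worth spelling out, being either a direct consequence of Jensen applied to the concave functional $(\det\,\cdot)^{1/N}$, or a limit of the finite Minkowski inequality along Riemann sums. No further obstacle is expected --- the key observation is that the orthogonality of $Z_s$ strips $\det N_s$ of all its stochasticity, reducing the estimate to an elementary deterministic computation, which is precisely why the conclusion holds for \emph{every} $\omega$ and not merely almost surely.
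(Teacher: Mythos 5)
Your proposal is correct and follows essentially the same route as the paper: the paper's Lemma 4.3 (variational expression of $\det^{1/N}$) is precisely what yields the integral Minkowski determinant inequality you invoke, and the paper applies it to $\hat C_t = \int_0^t Z_s^\top D_s Z_s\,ds$ (with $D_s = \Lambda_s$) after observing $\det C_t = \det \hat C_t$ by orthogonality of $Z_t$, which is the same use of orthogonality that in your write-up kills the stochasticity of $\det N_s$. The determinant computation for $\Lambda_s$ and the resulting bound $\det C_t \geq c' t^{N+1}$ match the paper's as well.
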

{\textit{Proof.}} Since $ M_t=t^{-1}\T_t^{-1}  C_t \T_t^{-1}$ and $\det(t^{1/2}\T_t)^{-2}=t^{-(N+1)}  $, we here concentrate on $\det( C_t)$. The claim of the proposition indeed follows from the bound
\begin{equation}
\label{DET_CT}
\det( C_t)\ge Ct^{N+1}.
\end{equation}
To derive \eqref{DET_CT} we recall the `variational' formulation of the determinant for symmetric matrices
(see for instance \cite{Delarue:Monge}).

\begin{lemma}[Variational expression of the determinant]
\label{LEMME_DET_VAR}
Let $A$ be a symmetric $N\times N $ matrix.  
Then 
$${\det}^{1/N}(A)=\frac{1}{N} \inf \{ {\rm Tr}(aA),\, a \in {\mathcal{S}}_N^+(\R),\, \det(a)=1\}, $$
where ${\mathcal S}_N^+(\R) $ stands for the set of symmetric positive $N\times N $ matrices. 
\end{lemma}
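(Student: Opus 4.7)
The plan is to reduce the claim to the diagonal case by orthogonal conjugation, and then to combine the AM-GM inequality with Hadamard's inequality for positive semidefinite matrices.

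First, I would write $A = P^\top D P$ with $P \in \SOR{N}$ and $D = \mathrm{diag}(\lambda_1,\dots,\lambda_N)$, the $\lambda_i$'s being the eigenvalues of $A$. (Note that for the infimum to be finite at all, one needs $A$ to be positive semidefinite, which is the case of interest in the application.) Since $\mathrm{Tr}(aA) = \mathrm{Tr}((PaP^\top)D)$ and the map $a \mapsto PaP^\top$ is a bijection on $\mathcal{S}_N^+(\R)$ preserving the determinant, it suffices to prove
\begin{equation*}
\det(D)^{1/N} = \frac{1}{N} \inf\bigl\{ \mathrm{Tr}(aD) : a \in \mathcal{S}_N^+(\R),\ \det(a) = 1 \bigr\}.
\end{equation*}

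For the lower bound, given such an $a$, we note $\mathrm{Tr}(aD) = \sum_{i=1}^N a_{ii}\lambda_i$. By the arithmetic-geometric mean inequality,
\begin{equation*}
\frac{1}{N}\sum_{i=1}^N a_{ii}\lambda_i \geq \Bigl( \prod_{i=1}^N a_{ii}\lambda_i \Bigr)^{1/N} = \Bigl(\prod_{i=1}^N a_{ii}\Bigr)^{1/N} \det(D)^{1/N}.
\end{equation*}
Since $a$ is symmetric positive semidefinite, Hadamard's inequality yields $\det(a) \leq \prod_{i=1}^N a_{ii}$, so that $\prod a_{ii} \geq 1$ and therefore $\mathrm{Tr}(aD) \geq N \det(D)^{1/N}$.

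For the matching upper bound, when $D$ is nonsingular, one takes $a := \det(D)^{1/N} D^{-1}$, which lies in $\mathcal{S}_N^+(\R)$, satisfies $\det(a) = \det(D)\cdot \det(D)^{-1} = 1$, and realizes $\mathrm{Tr}(aD) = \det(D)^{1/N}\cdot N$. If $D$ is singular, the right-hand side is zero, and the infimum is approached by concentrating $a$ on a null eigendirection: if $\lambda_1 = 0$, take $a_\varepsilon = \mathrm{diag}(\varepsilon^{-(N-1)},\varepsilon,\dots,\varepsilon)$ and let $\varepsilon \to 0$. The only mildly delicate point is the latter degenerate case, but in the application of the proposition the matrix to which it will be applied is positive definite, so only the non-degenerate case is really needed.
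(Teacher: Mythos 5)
Your proof is correct and complete. The paper itself does not prove Lemma \ref{LEMME_DET_VAR} — it only cites \cite{Delarue:Monge} — so there is no in-text argument to compare against, but the route you take (orthogonal diagonalization, then AM--GM combined with Hadamard's inequality for the lower bound, and the explicit minimizer $a=\det(D)^{1/N}D^{-1}$ for the upper bound) is the standard one, and is almost certainly the one in the cited reference. You are also right to flag a small imprecision in the statement itself: as written, with $A$ merely ``symmetric'', the infimum is $-\infty$ whenever $A$ has a negative eigenvalue (take $a=\mathrm{diag}(t,t^{-1/(N-1)},\dots,t^{-1/(N-1)})$ with $t\to\infty$ along a negative eigendirection), while $\det(A)^{1/N}$ is either positive or undefined; the identity holds for $A$ symmetric positive semidefinite, which is the only case used in the application to $\hat{C}_t=\int_0^t Z_s^\top D_s Z_s\,ds$. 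Your handling of the degenerate case ($\lambda_1=0$, with $a_\varepsilon=\mathrm{diag}(\varepsilon^{-(N-1)},\varepsilon,\dots,\varepsilon)$) is a clean way to make the argument uniform, even though, as you note, the application only needs the positive-definite case.
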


Recall the expression of $C_{t}$ from
\eqref{eq:cove2}.
Since $Z_t$ is an isometry, we have $\det(C_t)=\det(\hat C_t) $ where $\hat C_t=\int_0^t Z_s^\top d\langle \bar B\rangle _sZ_s$. From \eqref{barb2b}, we get 
that $\hat C_t=\int_0^t Z_s^\top D_s Z_s^{} ds$ with 
\begin{equation}
\label{eq:matrix:D}
D_s:= \textrm{diag}\Bigl(
(N-1)\frac{1- \exp(-2Ns)}{N},
1- \frac{1- \exp(-2Ns)}{N},\dots,
1- \frac{1- \exp(-2Ns)}{N}\Bigr),
\end{equation}
so that ${\rm det}(D_{s}) \geq Cs$ for $s \in [0,T]$ and for some constant $C:=C(N,T) >0$. Therefore, 
we derive from Lemma \ref{LEMME_DET_VAR} that,
for any $a \in {\mathcal S}_N^+$ with $\det(a)=1$,
\begin{equation*}
\begin{split}
\left\{ \frac 1N{\rm Tr}\left[ \left( \int_0^t Z_s^\top D_s Z_s^{}ds \right) a\right] \right\}^N
&=\left\{ \int_0^t \frac 1N{\rm Tr}\left(Z_s^\top D_s Z_s^{} a\right) ds \right\}^N
\\
&\ge \left\{ \int_0^t \det\left(Z_s^\top D_s Z_s^{}\right)^{1/N} ds \right\}^N 
\\
&\ge \left\{\int_0^t \det(D_s)^{1/N}ds\right\}^{N}\ge Ct^{N+1},
\end{split}
\end{equation*}
for a new value of $C$. 
Taking the infimum over $a$ and reapplying 
Lemma \ref{LEMME_DET_VAR}, this proves \eqref{DET_CT} and thus the proposition.
\hfill $\square $
\vspace{5pt}

To achieve the proof of Lemma \ref{LEMME_COV}, it therefore remains to check that the entries of the matrix $M_t$ are bounded in any $L^p(\P)$, $p \geq 1$ (uniformly on $[0,T]$). 
With the notation of Definition \ref{DEF_DET_REMAINDERS}, Lemma \ref{LEMME_COV} will follow from the control
\begin{equation}
\label{M_O_P1}
 \forall (i,j) \in \{1,\cdots, N \}^2,\ (M_t)_{i,j}=O_\P(1). 
\end{equation}
Associated with  Proposition \ref{PROP_CTR_DET}, this will indeed yield that 
$M_{t}^{-1/2}$ also satisfies \eqref{M_O_P1}
(by controlling from above and below the eigenvalues of ${M}_{t}$ in terms of its determinant and its norm).
Equation \eqref{M_O_P1} is easily derived from \eqref{ID_LAW_MIN}, \eqref{CTR_COV_ENTRIES} and the definition of the scale matrix $t^{1/2}\T_t$ in \eqref{PROC_SCALED}.\hfill $\square $

\subsubsection{Derivation of the tail estimates}
\label{SEC_PR_QUEUES}
This subsection is dedicated to the proof of Lemma \ref{LEMME_QUEUES}. Conditioning with respect to ${\mathcal F}_t^B:=\sigma((B_s)_{0 \leq s \leq t})$ (which is independent of $(\bar{B}_{s})_{s \geq 0}$),
\begin{equation*}
\begin{split}
\pi &:=\P \biggl( \bigcap_{i=1}^N \bigl\{ \vert (Z_t\Gamma_t)^i \vert >c^{1/2}\bigl | (v-Z_tx_0)^i\bigr| \bigr\} \biggr)
\\
&=\E\biggl[\P\biggl(\bigcap_{i=1}^N \biggl\{ \biggl\vert 
\biggl( Z_t\int_0^tZ_s^{\top} d\bar B_s\biggr)^i \biggr\vert >c^{1/2}\bigl | (v-Z_tx_0)^i\bigr| 	\biggr\} \big\vert \F_t^B\biggr)\biggr].
\end{split}
\end{equation*}

Since $(Z_s)_{0 \leq s \leq t}$ is an isometry, it is bounded and so is $(Z_tZ_s^\top)_{0 \leq s \leq t} $. 
Moreover, by 
\eqref{barb2b}, $d\langle \bar{B}_{t} \rangle/dt$ is less than $\textrm{Id}_{N}$
(in the sense of symmetric matrices). Therefore, 
By Proposition \ref{BERNSTEIN} (Bernstein inequality)
 applied to the conditionally Gaussian 
variables $ \bigr((\int_0^t Z_tZ_s^{\top}d\bar B_s)^i\bigl)_{i\in \{1,\cdots,N\}}$,
there exists a constant $\bar{c}:=\bar{c}(N) \geq 1$ such that
\begin{equation} 
\label{GAUSSIAN_TAIL}
\pi \le \bar c\E\biggl[\exp\biggl(- \frac{1}{\bar c}
\frac{|v^1-(Z_tx_0)^1|^2}{t}
- \frac{1}{\bar c} \sum_{i=2}^N \frac{| (v-Z_tx_0)^i |^2}{ t} \biggr)\biggr].
\end{equation}
Equation \eqref{GAUSSIAN_TAIL} provides us with the Gaussian part of the estimate. 
To derive the exponential one, we apply Chebychev inequality: for any $\gamma>0$,
\begin{equation}
\label{eq:21:11:1}
\begin{split}
\pi &\le \E\biggl[\exp\biggl(-\gamma c^{1/2}\frac{|v^1-(Z_tx_0)^1|}t \biggr)
\E\biggl[\exp\bigl(\frac \gamma t (Z_t\Gamma_{t})^1 \bigr)\I_{\cap_{i=1}^N \{ (Z_t\Gamma_t)^i>c^{1/2} | ( v-Z_tx_0)^i|\}}\bigr|\F_t^B \biggr] \biggr]\\
&\le \bar c^{1/2}\E\biggl[\exp\biggl(-\gamma c^{1/2} \frac{| v^1-(Z_tx_0)^1|}t 
- \frac{1}{2 \bar c} \frac{\vert (v^1 -(Z_t x_{0})^1 \vert^2}t 
-\frac{1}{2 \bar c}\sum_{i=2}^{N}\frac{|(v-Z_tx_0)^i|^2}t\biggr)
\\
&\hspace{15pt} \times
 \E\biggl[\exp\biggl(\frac {2\gamma} t \biggl( Z_t\int_0^t Z_s^{\top}d\bar  B_s \biggr)^1 \biggr)\biggr|\F_t^B\biggr]^{1/2}\biggr]\\
 &\le \bar c^{1/2} \E\biggl[\exp\biggl(-2\gamma c^{1/2} \frac{| v^1-(Z_tx_0)^1|}t 
-  \frac{1}{ \bar c} \frac{\vert (v^1 -(Z_t x_{0})^1 \vert^2}t 
- \frac{1}{\bar c} \sum_{i=2}^{N}\frac{|(v-Z_tx_0)^i|^2}t\biggr)\biggr]^{1/2}
\\
&\hspace{15pt} \times
 \E\biggl[\exp\biggl(\frac {2\gamma} t \biggl( Z_t\int_0^t Z_s^{\top}d\bar  B_s \biggr)^1 \biggr)\biggr]^{1/2}
 ,
\end{split}
\end{equation}
recalling $(Z_t\Gamma_t)^1=(Z_t\int_0^t Z_s^\top d\bar B_s)^1 $ and using also the Cauchy-Schwarz and Bernstein inequalities (similarly to  \eqref{GAUSSIAN_TAIL}) to pass from the first to the second line. Recalling 
\eqref{eq:cove2}
%
and using the
Gaussian character of the conditional distribution of  $ \int_0^t Z_s^{\top}d\bar B_s$ 
given $\F_t^B$, we get
\begin{equation}
\label{eq:21:11:2}
\begin{split}
\E\biggl[\exp\biggl(\frac {2\gamma} t \biggl( \int_0^t Z_t Z_s^{\top}d\bar  B_s \biggr)^1 \biggr)\biggr|\F_t^B\biggr]
= \E\biggl[\exp\bigl(\frac {2\gamma^2}{t^2} C_{t}^{1,1} \bigr)\bigr|\F_t^B\biggr]
\end{split}
\end{equation}
When taking the expectation, we know from the identity 
in law \eqref{ID_LAW_MIN} that we can replace $C_{t}$ by $\bar{C}_{t}$. 
By \eqref{eq:27:12:6}
and \eqref{eq:26:12:5}, 
\begin{equation*}
\begin{split}
\bar{C}_{t}^{1,1} &= \int_{0}^t \bigl( 1 - 2(t-s) - (1- 2N(t-s)) \bigl(Z_{s}^{1,1}\bigr)^2 \bigr) ds + O(t^3)
\\
&= \int_{0}^t \bigl( 1 -  \bigl(Z_{s}^{1,1}\bigr)^2 \bigr) ds + O(t^2)
= \int_{0}^t \bigl( 1 - Z_{s}^{1,1}\bigr)\bigl( 1 + Z_{s}^{1,1} \bigr) ds + O(t^2).
\end{split}
\end{equation*}
We then write $Z_{s}^{1,1} = (1+ {\mathcal O}(s))(1 + S_{s}^{1,1})$, which leads to a simplified version of
\eqref{CTR_COV_ENTRIES}:
\begin{equation*}
\bar{C}_{t}^{1,1} = t O \bigl( S_{t}^{*} \bigr)  + O(t^2).
\end{equation*}
The point is then to plug the above expansion into the expectation of \eqref{eq:21:11:2}. We thus compute the moments of the right-hand side above. We make use of  Lemma
\ref{lem:22:11:1}, which says that $S_{t}^*/t$ has an exponential tail. Therefore, choosing 
$\gamma$ small enough, we can bound 
the last factor in the right-hand side in
\eqref{eq:21:11:2}
by $\bar{C}:=\bar{C}(N,T)$. 
This completes the proof of Lemma \ref{LEMME_QUEUES}
$\square$

\subsubsection{Conclusion} Combining Lemmas \ref{LEMME_COV} and \ref{LEMME_QUEUES} we derive that, 
for $t \in [0,T]$,
\begin{equation}
\label{eq:cl:1}
\begin{split}
&f_{x_{0}}(t,v) 
\\
&\hspace{5pt} \leq \frac{C}{t^{(N+1)/2}}
\E\biggl[\exp\biggl(- \frac{1}{Ct} 
\biggl[ |v^1-( Z_{t} x_0)^1| + 
\vert v^1 -  (Z_{t} x_{0})^1 \vert^2 + 
\sum_{i=2}^{N} |v^i-(Z_tx_0)^i|^2 \biggr]\biggr) \biggr]^{1/2}, 
\end{split}
\end{equation}
with $C:=C(N,T)$.
Using Cauchy-Schwarz inequality, it suffices to bound 
\begin{equation*}
\begin{split}
&F_{1} := \E\biggl[\exp\biggl(- \frac{2}{Ct} 
\biggl[ |v^1-( Z_{t} x_0)^1| + 
\vert v^1 -  (Z_{t} x_{0})^1 \vert^2 \biggr]\biggr) \biggr]^{1/4},
\\
&F_{2} := \E\biggl[\exp\biggl(- \frac{2}{Ct}   
\sum_{i=2}^{N} |v^i-(Z_tx_0)^i|^2 \biggr) \biggr]^{1/4}.
\end{split}
\end{equation*}
We start with $F_{2}$. By the inequality,
$- 2|v^i-(Z_tx_0)^i|^2 \leq - |v^i|^2 +  2 \vert (Z_{t} x_{0})^i \vert^2$, we obtain 
\begin{equation}
\label{eq:cl:2}
\begin{split}
F_{2} &\leq \exp\biggl(- \frac{1}{4Ct}  
\sum_{i=2}^{N} |v^i|^2 \biggr) 
\E\biggl[\exp\biggl( \frac{2}{Ct} 
\sum_{i=2}^{N} |(Z_{t} x_0)^i|^2\biggr) \biggr]^{1/4}
\\
&= \exp\biggl(- \frac{1}{4Ct} 
\sum_{i=2}^{N} |v^i|^2 \biggr) 
\E\biggl[\exp\biggl( \frac{2\vert x_{0}^1\vert^2}{Ct} 
 \sum_{i=2}^{N} \bigl( Z_{t}^{i,1} \bigr)^2 \biggr) \biggr]^{1/4}.
\end{split}
\end{equation}
Now, 
$\sum_{i=2}^{N} ( Z_{t}^{i,1})^2 = 1 - (Z_{t}^{1,1})^2 = O( 1 - Z_{t}^{1,1}) = 
O(S_{t}^{1,1} + t)$.
Therefore, for $\vert x_{0}^1 \vert \leq C_{0}$, we deduce from Lemma
\ref{lem:22:11:1} that we can choose $C:=C(N,T,C_{0})$ large enough  
 in \eqref{eq:cl:1}
 such that the second factor in 
the last line is bounded by $C$.  

It remains to bound $F_{1}$. Given some $A>0$, we split the expectation according to the events 
$\{\vert Z_{t}^{1,1} - 1 \vert \vert x_{0}^1 \vert \leq A\}$ and 
$\{\vert Z_{t}^{1,1} - 1 \vert \vert x_{0}^1 \vert > A\}$. Using
the inequalities
$- 2|v^1-(Z_tx_0)^1|^2 \leq - |v^1 - x_{0}^1|^2 +  2 \vert (Z_{t}^{1,1}-1) x_{0}^1\vert^2$
and 
$- 2|v^1-(Z_tx_0)^1| \leq - |v^1 - x_{0}^1| +  2 \vert (Z_{t}^{1,1}-1) x_{0}^1\vert$, 
we obtain 
\begin{equation*}
\begin{split}
F_{1} &\leq \exp\biggl(- \frac{1}{4Ct} 
\biggl[ |v^1-x_{0}^1| + 
\vert v^1 -  x_{0}^1 \vert^2 \biggr]
+ \frac{1}{2Ct} \bigl( A+ A^2 \bigr)\biggr)
 + \P \bigl( 
\vert Z_{t}^{1,1} - 1 \vert \vert x_{0}^1 \vert > A
\bigr),
\end{split}
\end{equation*}
for $C:=C(N,T)$. 
Now, by Proposition \ref{BERNSTEIN} (Bernstein inequality), we have
\begin{equation*}
\P \bigl( \vert (Z_{t}^{1,1} - 1) x_{0}^1 \vert \geq A \bigr)
\leq 2 \exp \bigl( - \frac{A^2}{2(x_{0}^1)^2 t} \bigr).
\end{equation*}
On the other hand, since $Z^{1,1}_{t}- 1 = O(S_{t}^{1,1}+t)$, Lemma \ref{lem:22:11:1} yields
(for a possibly new value of $C$)
\begin{equation*}
\P \bigl( \vert (Z_{t}^{1,1} - 1) x_{0}^1 \vert \geq A \bigr)
\leq C \exp \bigl( - \frac{A}{C \vert x_{0}^1 \vert t} \bigr). 
\end{equation*}
Choosing $A= \vert x_{0}^1- v_{0}^1 \vert / 8$, we deduce that (once again, modifying $C$ if necessary)
\begin{equation}
\label{eq:cl:3}
\begin{split}
F_{1} &\leq C \exp\biggl(- \frac{1}{Ct} 
\biggl[ \frac{|v^1-x_{0}^1|}{1 \vee \vert x_{0}^1 \vert} + 
\frac{\vert v^1 -  x_{0}^1 \vert^2}{1 \vee \vert x_{0}^1 \vert^2} 
\biggr] \biggr).
\end{split}
\end{equation}
By \eqref{eq:cl:1}, \eqref{eq:cl:2} and \eqref{eq:cl:3}, 
we get an upper bound for $f_{x_{0}}(t,v)$. When $x_{0} \in [-C_{0},C_{0}]$, we can choose  
$C$, depending upon $N$, $T$ and $C_{0}$, in order to get the required estimate. (As in the lower 
bound, the dependence of $C$ upon $C_{0}$ can be made explicit.)  
\color{black}

\end{document}